\documentclass[11pt]{amsart}
\usepackage[margin=1in]{geometry}

\usepackage{MnSymbol}
\usepackage{mathrsfs}
\usepackage{colonequals}

\usepackage{hyperref}
\hypersetup{
pdflang={en-US},
pdftitle={The normal bundle of a general canonical curve of genus at least 7 is semistable},
pdfauthor={Izzet Coskun, Eric Larson, and Isabel Vogt},
pdfsubject={Algebraic Geometry},
pdfkeywords={canonical curves, normal bundle, stability}
}

\usepackage{tikz-cd}
\usetikzlibrary{decorations.pathreplacing}

\makeatletter
\newcommand{\oset}[3][0ex]{%
  \mathrel{\mathop{#3}\limits^{
    \vbox to#1{\kern-2\ex@
    \hbox{\(\scriptstyle#2\)}\vss}}}}
\makeatother

\newcommand{\pp}{\mathbb{P}}

\newcommand{\N}{\mathcal{N}}
\renewcommand{\O}{\mathcal{O}}

\newcommand{\sC}{\mathscr{C}}
\newcommand{\sF}{\mathscr{F}}
\newcommand{\sI}{\mathscr{I}}
\newcommand{\sV}{\mathscr{V}}

\newcommand{\sing}{\text{sing}}
\newcommand{\Pic}{\operatorname{Pic}}
\newcommand{\Bl}{\operatorname{Bl}}
\renewcommand{\Im}{\operatorname{Im}}
\newcommand{\codim}{\operatorname{codim}}
\newcommand{\rk}{\operatorname{rk}}
\newcommand{\ev}{\operatorname{ev}}
\renewcommand{\bar}{\overline}
\renewcommand{\hat}{\widehat}

\newcommand{\leqpar}{\underset{{\scriptscriptstyle (}-{\scriptscriptstyle )}}{<}}

\newcommand{\posmod}{\oset{+}{\rightarrow}}
\newcommand{\negmod}{\oset{-}{\rightarrow}}
\newcommand{\posmodalong}{\oset{+}{\leadsto}}
\newcommand{\negmodalong}{\oset{-}{\leadsto}}

\newcommand{\qand}{\quad \text{and} \quad}
\newcommand{\qwhere}{\quad \text{where} \quad}

\newcommand{\defi}[1]{\textsf{#1}} 

\newtheorem{thm}{Theorem}[section]
\newtheorem{lem}[thm]{Lemma}
\newtheorem{prop}[thm]{Proposition}
\newtheorem{cor}[thm]{Corollary}

\theoremstyle{definition}
\newtheorem{defin}[thm]{Definition}
\newtheorem{cond}[thm]{Condition}
\newtheorem{setup}[thm]{Setup}

\theoremstyle{remark}
\newtheorem{rem}{Remark}

\title{The normal bundle of a general canonical curve of genus at least \(7\) is semistable}
\author{Izzet Coskun}
\address{Department of Mathematics, Statistics, and CS \\
University of Illinois at Chicago, Chicago IL 60607}
\email{icoskun@uic.edu}
\author{Eric Larson}
\address{Department of Mathematics, Brown University}
\email{elarson3@gmail.com}
\author{Isabel Vogt}
\address{Department of Mathematics, Brown University}
\email{ivogt.math@gmail.com}

\thanks{During the preparation of this article, I.C.\ was supported
by NSF FRG grant DMS-1664296 and NSF grant DMS-2200684,  E.L. was supported by
NSF  grants DMS-1802908 and DMS-2200641, and I.V. was supported by NSF grants DMS-1902743 and DMS-2200655.}
\keywords{Canonical curves, normal bundle, stability}
\subjclass[2010]{Primary: 14H60. Secondary: 14B99.}

\begin{document}

\begin{abstract}
Let \(C\) be a general canonical curve of genus \(g\) defined over an algebraically closed field of arbitrary characteristic. We prove that if \(g \notin \{4,6\}\), then the normal bundle of \(C\) is semistable.
In particular, if \(g \equiv 1\) or \(3 \pmod{6}\), then the normal bundle is stable.     
    
\end{abstract}
\maketitle

\section{Introduction}

Let \(k\) be an algebraically closed field of arbitrary characteristic. Let \(C\) be a nonsingular, irreducible, non-hyperelliptic curve of genus \(g \geq 3\) defined over \(k\). Then the canonical linear system \(|K_C|\) embeds \(C\) in \(\pp^{g-1}\). The image is called a  \defi{canonical curve of genus \(g\)}. Canonical curves of genus \(g\) lie in an irreducible component of the Hilbert scheme of curves of genus \(g\) in \(\pp^{g-1}\). Studying the properties of canonical curves is an essential tool in curve theory. 

Given a vector bundle \(V\) on \(C\) of rank \(r\) and degree \(d\), recall that the \defi{slope} of \(V\) is defined by \(\mu(V)\colonequals \frac{d}{r}\). The  bundle \(V\) is called \defi{semistable} if, for every proper subbundle \(W\), we have \(\mu(W) \leq \mu(V)\). The bundle is called \defi{stable} if the inequality is always strict. 

Since stable bundles are the atomic building blocks of all vector bundles on a curve, it is important to ask if  naturally-defined vector bundles on canonical curves, such as the restricted tangent bundle \(T_{\pp^{g-1}}|_C\) or the normal bundle \(N_C\), are stable.  The first of the these is straightforward:  the restricted tangent bundle of a general canonical curve of genus \(g \geq 3\) is always stable.  In fact, the restricted tangent bundle of a general Brill--Noether curve of any degree \(d\) and genus \(g \geq 2\) in \(\pp^r\) is stable unless \((d,g) = (2r, 2)\) \cite{fl}.  On the other hand, the normal bundle can fail to be stable in low genus (cf.~ Remark  \ref{rem:counter}).

Aprodu, Farkas and Ortega \cite{AFO} conjectured that once the genus is sufficiently large, the normal bundle of a general canonical curve is stable. 
Previously, this was only known for \(g=7\) \cite{AFO} and for \(g=8\) \cite{Bruns}.  The proofs of these two results use explicit models of low genus canonical curves due to Mukai, and  thus do not generalize to large genus.
In this paper, we prove:

\begin{thm}\label{thm:main}
Let \(C\) be a general canonical curve of genus \(g \not\in \{4,6\}\) defined over an algebraically closed field of arbitrary characteristic. Then the normal bundle of \(C\) is semistable. 
\end{thm}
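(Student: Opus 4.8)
The plan is to prove semistability by a degeneration-and-induction argument, exploiting that semistability is an \emph{open} condition: the maximal destabilizing slope \(\mu_{\max}\) is upper semicontinuous along specialization, so since the canonical curves of genus \(g\) form an irreducible family, it suffices to produce a \emph{single} member — possibly a reducible nodal curve in the boundary of this family — whose normal bundle (or its flat limit) is semistable. First I would record the numerics. From the Euler and normal bundle sequences, \(N_C\) has rank \(g-2\) and degree \(2g^2-2\), so \(\mu(N_C) = \frac{2(g^2-1)}{g-2}\). Since \(2(g^2-1) \equiv 6 \pmod{g-2}\), a proper subbundle of rank \(r\) can have integral slope equal to \(\mu(N_C)\) only if \((g-2) \mid 6r\), which for \(0 < r < g-2\) forces \(\gcd(g-2,6) > 1\); when \(\gcd(g-2,6)=1\), i.e.\ \(g \equiv 1,3 \pmod 6\), no proper subbundle can meet the slope bound with equality, so semistability automatically upgrades to stability. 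This already recovers the corollary in the abstract.

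For the inductive step I would degenerate \(C\) to a canonically embedded reducible nodal curve \(C_0 = C' \cup_\Gamma L\) (that is, \(\O_{C_0}(1) = \omega_{C_0}\)), where \(L\) is a line meeting the residual curve \(C'\) at a set \(\Gamma\) of nodes. The dualizing sheaf dictates the combinatorics: \(\omega_{C_0}|_L = \omega_L(\Gamma)\) has degree \(|\Gamma|-2\), so a rational tail attached at two points would be contracted and one cannot lower the genus by only one; the efficient reduction takes \(|\Gamma| = 3\) and lowers the genus by two, leaving \(C'\) embedded by \(\omega_{C'}(\Gamma)\) rather than by its own canonical series. Note that the three images of \(\Gamma\) are automatically collinear, since \(\omega_{C'}(\Gamma)(-\Gamma) = \omega_{C'}\) drops \(h^0\) by only two, so the attaching line \(L\) genuinely exists; this is exactly the secant geometry that powers the degeneration. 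Consequently the statement that propagates is semistability of the normal bundle for the broader class of general curves embedded by a complete series \(\omega_{C'}(D)\), and I would induct on \(g\) in steps of two within this class, seeding the odd chain at \(g=7\) \cite{AFO} (with \(g=3,5\) done by hand, the former having \(N_C\) a line bundle) and the even chain at \(g=8\) \cite{Bruns}, so that the genuinely unstable genera \(g \in \{4,6\}\) (Remark~\ref{rem:counter}) are never used as base cases.

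The heart of the argument is the analysis of the limiting normal bundle \(N_0\) on \(C_0\), which has the same rank and degree, hence the same slope, as \(N_C\). Its restriction to each component is a positive elementary modification along \(\Gamma\) in the normal directions of the opposite branch, and \(N_0\) is reconstructed by gluing \(N_0|_{C'}\) and \(N_0|_L\) over the fibers at \(\Gamma\); the relevant exact sequences relate \(N_0|_{C'}\) to \(N_{C'/\pp^{g-1}}\) and \(N_0|_L\) to the balanced — hence semistable — bundle \(N_{L/\pp^{g-1}} = \O(1)^{\oplus(g-2)}\). Applying the inductive hypothesis to the lower-genus member \(C'\) of the broader class gives semistability of \(N_{C'/\pp^{g-1}}\); I would then bound the slope of an arbitrary subbundle \(W \subseteq N_0\) by restricting it to each component, controlling \(\deg(W|_{C'})\) by the inductive semistability (corrected by the modification shift) and \(\deg(W|_L)\) by the explicit semistability on \(L\), while the generic gluing at \(\Gamma\) prevents \(W\) from achieving large degree on both components simultaneously. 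Combining these contributions yields \(\mu(W) \le \mu(N_0) = \mu(N_C)\).

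I expect two points to be the principal obstacles. The first is structural: one must verify that \(C_0\) is a genuine smoothable flat limit inside the canonical Hilbert component (a smoothing/obstruction computation via \(H^1\) of the relevant normal sheaf), and formulate the inductive hypothesis — over \(\omega(D)\)-embeddings, likely carried in a twisted or pointed form and reconciled with the purely canonical base cases by a further modification relating the \(\omega(\Gamma)\)-embedding of \(C'\) to its canonical embedding — broadly enough that the reduction closes. The second, and more delicate, is the exact slope bookkeeping at the nodes: each positive modification shifts the restricted degrees by a controlled amount, and one must check that these shifts balance so that no subbundle of \(N_0\) exceeds the precise slope \(\frac{2(g^2-1)}{g-2}\), and moreover that the inequality is strict whenever \(\gcd(g-2,6)=1\). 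Propagating this \(\le\)-versus-\(<\) distinction cleanly through the modification sequences is where the real care lies.
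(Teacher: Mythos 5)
Your overall strategy---degenerate to a reducible nodal curve in the boundary of the family, use openness of (adjusted) semistability, and control subbundles component by component---is the same framework the paper uses, and your numerics (rank \(g-2\), degree \(2(g^2-1)\), the upgrade to stability when \(\gcd(g-2,6)=1\)) are correct. But the specific degeneration you choose creates a gap that your sketch does not close. With \(C_0 = C' \cup_\Gamma L\) and \(|\Gamma|=3\), Hartshorne--Hirschowitz gives \(N_{C_0}|_L \simeq N_L[\posmodalong C'] \), which for general configurations is \(\O(2)^{\oplus 3}\oplus\O(1)^{\oplus(g-5)}\): this has slope \(\frac{g+1}{g-2}\) but admits rank-\(r\) subbundles of slope \(2\) for \(r\le 3\), an excess of \(\frac{g-5}{g-2}\), which is close to \(1\) for large \(g\). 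Semistability of \(N_{C_0}|_{C'}\) (your inductive input) therefore cannot compensate: you would need every subbundle of \(N_{C_0}|_{C'}\) whose fibers at \(\Gamma\) glue to the positive part on \(L\) to have slope \emph{strictly below} \(\mu(N_{C_0}|_{C'})\) by roughly \(\frac{g-5}{g-2}\). That strengthened bound is the entire content of the problem, and ``the generic gluing at \(\Gamma\) prevents \(W\) from achieving large degree on both components'' is an assertion, not an argument. The paper chooses its degeneration (elliptic normal curve plus a \(g\)-secant rational curve) precisely so that the restriction to the rational component is \emph{perfectly} balanced for odd \(g\), and off by only \(\frac{1}{r}\) for even \(g\); even that tiny excess costs two full sections (a geometric identification of the Harder--Narasimhan filtration via a rank-\(4\) quadric, naturality/divisibility constraints on degrees of destabilizing subbundles, and a delicate induction). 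Your degeneration makes the imbalance an order of magnitude worse.

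There is a second, structural gap: your induction does not close as formulated. The residual curve \(C'\) is embedded by \(\omega_{C'}(\Gamma)\), not canonically, so the statement you must propagate concerns normal bundles of curves of degree \(2g'+1\) in \(\pp^{g'+1}\) (and, worse, their positive modifications along \(L\) at \(\Gamma\)), while your base cases \cite{AFO}, \cite{Bruns} are statements about canonical curves only. Semistability of normal bundles is false for general Brill--Noether curves in some ranges, so ``the broader class of \(\omega(D)\)-embeddings'' cannot simply be assumed; the inductive hypothesis must be formulated and proved for the modified bundles that actually appear, with the strict slope deficits discussed above built in. Until both of these are supplied, the proposal is a plausible plan rather than a proof.
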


\noindent The rank
of \(N_C\) is \(g-2\) and the degree of \(N_C\) is \(2(g^2-1)\). Hence, 
\[\mu(N_C) = 2g+4 + \frac{6}{g-2}.\]
In particular, if \(g-2\) and \(6\) are relatively prime, the semistability of \(N_C\) implies the stability of \(N_C\). We thus obtain the following corollary.

\begin{cor}
If \(g \equiv  1\) or \(3 \pmod{6}\), then the normal bundle of the general canonical curve of genus \(g\) is stable.
\end{cor}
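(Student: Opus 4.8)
The plan is to prove the equivalent statement that $N_C$ admits no destabilizing subsheaf, and to establish this by degenerating $C$ to a well-chosen reducible nodal canonical curve. The key reduction is a specialization argument via Quot schemes: since the Hilbert polynomial — and hence the rank and degree — of a subsheaf is preserved under flat limits, any rank-$s$ subsheaf $W \subset N_C$ with $\mu(W) > \mu(N_C)$ would degenerate to a subsheaf of the same rank and degree in the limiting normal sheaf. Thus it suffices to exhibit, for each $g \not\in \{4,6\}$, a single nodal canonical curve $X \subset \pp^{g-1}$ — smoothable to a general canonical curve inside the canonical component of the Hilbert scheme — whose normal sheaf $N_X$ carries no subsheaf of rank $s$ and degree exceeding $s(2g+4) + \lfloor 6s/(g-2) \rfloor$ for any $0 < s < \rk N_C = g-2$; this integer bound is exactly the negation of destabilization, given $\mu(N_C) = 2g+4 + 6/(g-2)$. (One must also invoke the standard Hartshorne–Hirschowitz comparison between the flat limit of the normal bundles and the normal sheaf of $X$.) I would construct $X$ by induction on $g$, writing $X = Y \cup_\Gamma Z$ as the union of a lower-genus curve $Y$ and a rational curve $Z$ meeting it along nodes $\Gamma$; because $\omega_X$ restricts to each component as $\omega_{(\cdot)}(\Gamma)$ rather than its own canonical bundle, the inductive statement must be phrased for the broader class of curves embedded by $\omega(D)$ for an effective divisor $D$, with canonical curves recovered at $D = 0$.

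The engine of the induction is the pair of exact sequences expressing the restriction of $N_X$ to each component as an elementary modification of that component's normal bundle: at each node $p \in \Gamma$ the branch of $Z$ supplies a normal direction to $Y$, so that $N_X|_Y$ is a positive modification of $N_Y$ supported on $\Gamma$ and pointing along the tangents of $Z$, written $N_Y \posmod N_X|_Y$ (and symmetrically for $N_X|_Z$). Globally $N_X$ is then reconstructed from these restrictions through the gluing sequence $0 \to N_X \to (N_X|_Y) \oplus (N_X|_Z) \to N_X|_\Gamma \to 0$. Through this reconstruction I would track two quantities: the slope, and for each candidate rank the maximal degree of a subsheaf. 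The numerical bookkeeping is arranged so that the $+1$-per-node contributions of the modifications are spread as evenly as possible across the $g-2$ nearly-equal summands of a balanced normal bundle — precisely what a uniform, semistable splitting type requires — and the slack of $6/(g-2)$ in the slope is exactly the room available before a subsheaf turns destabilizing, so it must not be overspent at any node.

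The main obstacle, and the genuine content of the argument, is ruling out destabilizing subsheaves concentrated near the nodes: by exploiting the gluing data a subsheaf of $N_X$ can acquire slope larger than that of either $N_X|_Y$ or $N_X|_Z$, so the nodes $\Gamma$ and the modification directions must be placed in sufficiently general position that no such subsheaf arises, and the inductive inequality must be carried with the sharp constant $\lfloor 6s/(g-2)\rfloor$ rather than any lossy estimate. This is also precisely where the hypothesis $g \not\in \{4,6\}$ enters: a general canonical curve of genus $4$ or $6$ is forced onto a special surface — the $(2,3)$ complete intersection when $g=4$, a quintic del Pezzo when $g=6$ — which produces an honest destabilizing subbundle (for $g=4$ the summand $\O_C(3) \subset N_C = \O_C(2)\oplus\O_C(3)$ has degree $18 > 15 = \mu(N_C)$), so any admissible degeneration must steer clear of these geometries. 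Finally the induction requires base cases, for which I would take the genus increment to be $2$ and verify semistability directly for $g = 7$ and $g = 8$ using the explicit Mukai models already handled in \cite{AFO} and \cite{Bruns}; these seed both parities and propagate to all $g \geq 7$, while $g = 3$ and $g = 5$ (where $N_C$ is a line bundle, resp.\ $\O_C(2)^{\oplus 3}$) are semistable by inspection, covering every $g \not\in \{4,6\}$. The stated corollary is then immediate from the divisibility computation preceding it.
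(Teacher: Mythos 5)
Your final sentence is the paper's entire proof of this corollary: since \(\mu(N_C) = 2g+4 + \tfrac{6}{g-2}\) and \(g \equiv 1, 3 \pmod 6\) forces \(\gcd(6, g-2) = 1\), no subsheaf of rank \(r < g-2\) can have slope equal to \(\mu(N_C)\), so semistability (Theorem \ref{thm:main}) upgrades to stability. That step is correct. Everything before it, however, is an attempted re-proof of Theorem \ref{thm:main}, and there the proposal has a genuine gap: you correctly identify the hard point (ruling out subsheaves that exploit the gluing at the nodes, while keeping the bound sharp to within \(\lfloor 6s/(g-2)\rfloor\)), but you supply no mechanism for it. Pure degeneration bookkeeping of the kind you describe cannot deliver that precision: in the paper's own degeneration the best slope bound obtainable this way (Lemma \ref{prop:N_restrict_E_close_ss}: no subbundle of \(N_{E\cup R}|_E\) of slope exceeding \(g+3+\tfrac1k\), where \(g-2 = 6k+\epsilon\)) overshoots the semistability threshold \(g+3+\tfrac{6}{g-2}\) by roughly a factor of \(6\) in the fractional part. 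The gap is closed not by sharper numerics but by a divisibility input of a completely different nature: the maximal destabilizing subbundle is natural, so its determinant defines a natural map \(\Pic^g E \to \Pic^d E\), forcing \(g \mid d\) (Lemma \ref{lem:blackmagic-natural}), after which the combinatorial Lemma \ref{lem:numerical} eliminates every remaining \((r,d)\). Some such non-numerical ingredient (naturality, or the strong Franchetta conjecture, which the paper uses for even genus) is indispensable, and it is absent from your plan.

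Beyond that central issue, three further points would need repair. First, the paper does not induct on \(g\) by attaching rational tails two genera at a time; it degenerates once to \(E \cup R\) with \(E\) an elliptic normal curve of degree \(g\) and \(R\) a \(g\)-secant rational curve of degree \(g-2\), chosen precisely so that \(N_{E\cup R}|_R\) is balanced for odd \(g\) (Lemma \ref{lem:N_restrict_R}) and Lemma \ref{lem:naive} applies componentwise; for even \(g\) the restriction to \(R\) is unbalanced and two additional sections of work (or Franchetta) are required, which your parity-seeding via \(g=8\) does not anticipate. Second, taking the \(g=7,8\) results of \cite{AFO} and \cite{Bruns} as base cases reimports the Mukai-model arguments the paper is explicitly structured to avoid, and would compromise the claim in arbitrary characteristic. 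Third, your semicontinuity step needs the adjusted-slope formalism (Definition \ref{sec-prelim}.1, Proposition \ref{prop:stab-open}): a destabilizing subsheaf of the general fiber limits to a subbundle of the pullback of the special fiber to the normalization, whose slope must be corrected by the codimensions of \(F|_{\tilde p_1} \cap F|_{\tilde p_2}\) at the nodes, not merely compared via the gluing sequence \(0 \to N_X \to N_X|_Y \oplus N_X|_Z \to N_X|_\Gamma \to 0\).
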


\begin{rem}\label{rem:counter}
When \(g=3\),  the canonical curve is a plane quartic curve. Hence, \(N_C \cong \O_C(4)\) and is stable. When \(g=5\), the general canonical curve is a complete intersection of three quadrics. Hence, \(N_{C}\cong \O_C(2)^{\oplus 3}\). In particular, \(N_C\) is semistable but not stable. When \(g=4\) or \(6\), \(N_C\) is unstable, as we now explain. When \(g=4\), the canonical curve is a complete intersection of type \((2,3)\). The normal bundle of \(C\) in the quadric is a destabilizing line subbundle of \(N_C\) of degree \(18\). When \(g=6\), the general canonical curve is a quadric section of a quintic del Pezzo surface. The normal bundle of \(C\) in this del Pezzo surface gives a degree 20 destabilizing line subbundle of \(N_C\).
\end{rem}

We will prove Theorem \ref{thm:main} by specializing a canonical curve to the union of an elliptic normal curve \(E\) of degree \(g\) and a \(g\)-secant rational curve \(R\) of degree \(g-2\) meeting \(E\) quasi-transversely in \(g\) points.
In \S \ref{sec:our_degen}, we describe this degeneration and the Harder-Narasimhan (HN) filtration of \(N_{E \cup R }|_R\).
In \S \ref{sec:restriction_to_E}, we will prove that \(N_{E \cup R }|_E\) is semistable.
This suffices to prove Theorem \ref{thm:main} when \(g\) is odd by \cite[Lemma 4.1]{clv},
because \(N_{E \cup R }|_R\) is balanced in this case.
When \(g\) is even, \(N_{E \cup R }|_R\) is not balanced. However, we have an explicit geometric understanding of the HN-filtration. In this case, we give two proofs of Theorem~\ref{thm:main}, one using the strong Franchetta Conjecture (see~\S \ref{sec:restriction_to_E}), and an elementary proof using the explicit HN-filtration and induction on \(g\) (in \S \ref{sec:Qsing} and \S \ref{sec:even_g_numerics}). 

\subsection*{Acknowledgments} We would like to thank Atanas Atanasov, Lawrence Ein, Gavril Farkas, Joe Harris, Eric Riedl, Ravi Vakil, and David Yang for invaluable conversations. We also thank the referee for a careful reading of our paper and many valuable suggestions.

\section{Preliminaries}\label{sec-prelim}
In this section, we collect basic facts about normal bundles of reducible curves and their stability. We refer the reader to \cite{clv} for more details.

\subsection{Stability of vector bundles on nodal curves} In our argument, we will specialize canonical curves to certain nodal curves. We recall a natural extension of the notion of stability to nodal curves (see \cite[\S 2]{clv}). Let \(C\) be a connected nodal curve and write \[\nu \colon \tilde{C} \to C\] 
for the normalization of \(C\).  For any node \(p\) of \(C\), let \(\tilde{p}_1\) and \(\tilde{p}_2\) be the two points of \(\tilde{C}\) over \(p\).

Given a vector bundle \(V\) on \(C\), the fibers of the pullback \(\nu^*V\) to \(\tilde{C}\) over \(\tilde{p}_1\) and \(\tilde{p}_2\) are naturally identified.  Given a subbundle \(F \subseteq \nu^*V\), we can thus compare \(F|_{\tilde{p}_1}\) and \(F|_{\tilde{p}_2}\) inside \(\nu^*V|_{\tilde{p}_1} \simeq \nu^*V|_{\tilde{p}_2}\).

\begin{defin}
Let \(V\) be a vector bundle on a connected nodal curve \(C\).  
For a subbundle \(F \subset \nu^*V\), define the \defi{adjusted slope \(\mu^{\text{adj}}_C\)} by 
\[\mu^{\text{adj}}_C(F) \colonequals \mu(F) - \frac{1}{\rk{F}} \sum_{p \in C_{\text{sing}}} \codim_{F} \left(F|_{\tilde{p}_1}\cap F|_{\tilde{p}_2} \right),\]
where \(\codim_F \left(F|_{\tilde{p}_1}\cap F|_{\tilde{p}_2} \right)\) refers to the codimension
of the intersection in either \(F|_{\tilde{p}_1}\) or \(F|_{\tilde{p}_2}\) (which are equal
since \(\dim F|_{\tilde{p}_1} = \dim F|_{\tilde{p}_2}\)). 
Note that if \(F\) is pulled back from \(C\), then \(\mu^{\text{adj}}_C(F) = \mu(F)\).
We say that \(V\) is \defi{(semi)stable} if for all subbundles \(F \subset \nu^*V\),
\[\mu^{\text{adj}}(F) \leqpar \mu(\nu^*V) = \mu(V). \]
\end{defin}

\noindent
The advantage of this definition is that it specializes well.

\begin{prop} \cite[Proposition 2.3]{clv} \label{prop:stab-open}
Let \(\sC \to \Delta\) be a family of connected nodal curves over
the spectrum of a discrete valuation ring,
and \(\sV\) be a vector bundle on \(\sC\).
If the special fiber \(\sV_0 = \sV|_0\) is (semi)stable,
then the general fiber \(\sV^* = \sV|_{\Delta^*}\) is also (semi)stable.
\end{prop}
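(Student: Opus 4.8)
The plan is to prove the contrapositive by the standard limiting argument, after first recasting the adjusted-slope notion of (semi)stability in terms of honest subsheaves on the nodal curve, where no correction term appears and where properness of the relative Quot scheme applies verbatim. The point is that the correction term at the nodes is exactly what obstructs a naive limit of subbundles of \(\nu^*\sV\)—along such a limit the adjusted slope can \emph{drop}, because \(\dim(F|_{\tilde p_1}\cap F|_{\tilde p_2})\) is upper semicontinuous—whereas on \(C\) itself the degree is literally preserved by flatness.

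First I would establish the translation lemma. For a vector bundle \(V\) on a connected nodal curve \(C\) with normalization \(\nu\colon \tilde C \to C\), I claim that (semi)stability in the sense of the Definition above is equivalent to the condition that every proper nonzero subsheaf \(\mathscr{F} \subseteq V\) satisfies \(\mu(\mathscr{F}) \leqpar \mu(V)\), the degree being computed on \(C\) via \(\deg \mathscr{F} = \chi(\mathscr{F}) - \rk \mathscr{F}\cdot \chi(\O_C)\). The mechanism is the dictionary between saturated subsheaves \(\mathscr{F} \subseteq V\) and subbundles \(F \subseteq \nu^* V\): one passes from \(F\) to \(\mathscr{F} = V \cap \nu_* F\) (the intersection taken inside \(\nu_* \nu^* V \supseteq V\), which is saturated since \(\nu_*\) of a locally free sheaf is torsion-free), and back by pulling back and saturating. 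A local computation at each node shows that under this correspondence \(\rk \mathscr{F} = \rk F\) while
\[ \deg_C \mathscr{F} = \deg_{\tilde C} F - \sum_{p \in C_{\sing}} \codim_{F}\!\left(F|_{\tilde p_1} \cap F|_{\tilde p_2}\right) = \rk F \cdot \mu^{\text{adj}}_C(F). \]
Since it suffices to test saturated subsheaves downstairs (saturating raises the degree and fixes the rank) and these are precisely the ones arising from subbundles upstairs, the two tests agree. Verifying this degree formula at a node is the one genuinely computational step, and it is the main obstacle; once it is in hand the rest is formal.

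With the lemma in place I would argue as follows. Suppose the general fiber \(\sV^*\) is \emph{not} (semi)stable. Applying the lemma over the generic point \(\eta\) of \(\Delta\) (the general fiber may itself be nodal, so working with subsheaves is the right level of generality), there is a proper nonzero subsheaf \(\mathscr{F}_\eta \subseteq \sV_\eta\) violating the defining inequality, i.e.\ with \(\mu(\mathscr{F}_\eta) > \mu(\sV)\) in the semistable case and \(\mu(\mathscr{F}_\eta) \geq \mu(\sV)\) in the stable case. Let \(\mathscr{G}_\eta \colonequals \sV_\eta / \mathscr{F}_\eta\); this is a \(K\)-point of the relative Quot scheme \(\operatorname{Quot}(\sV / \sC / \Delta)\) for its Hilbert polynomial. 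Since \(\sC \to \Delta\) is projective, this Quot scheme is projective over \(\Delta\); by the valuative criterion the quotient extends uniquely to an \(R\)-flat quotient \(\sV \twoheadrightarrow \mathscr{G}\) whose kernel \(\sF\) is \(R\)-flat (a torsion-free subsheaf of \(\sV\) over a discrete valuation ring) and restricts to \(\mathscr{F}_\eta\) over \(\eta\).

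Finally I would restrict to the special fiber. Flatness of \(\mathscr{G}\) over \(R\) gives \(\operatorname{Tor}_1(\mathscr{G}, k) = 0\), so the sequence \(0 \to \sF_0 \to \sV_0 \to \mathscr{G}_0 \to 0\) remains exact and \(\sF_0 \colonequals \sF|_0\) is a genuine subsheaf of \(\sV_0\). Because \(\chi\) is constant in flat families and the arithmetic genus \(\chi(\O_{C_t})\) is constant along \(\sC \to \Delta\), both degree and rank are preserved, whence \(\mu(\sF_0) = \mu(\mathscr{F}_\eta)\) violates the same inequality on the special fiber. By the lemma this contradicts the (semi)stability of \(\sV_0\), completing the proof. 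The decisive simplification is that, having passed to subsheaves on \(C\), properness of Quot does all the work and the delicate node correction never needs to be tracked through the limit—it has already been absorbed into the equivalence of the two stability notions.
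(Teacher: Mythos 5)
This proposition is quoted from \cite{clv} rather than proved in the present paper, but your argument is correct and is essentially the proof given there: one converts the adjusted slope of a subbundle \(F \subseteq \nu^*V\) into the honest slope (computed via \(\chi\)) of the subsheaf \(V \cap \nu_*F\) of \(V\) on \(C\) itself --- and your local computation is right, since \(\nu_*F/(V\cap\nu_*F)\) has length \(\rk F + \codim_F\left(F|_{\tilde{p}_1}\cap F|_{\tilde{p}_2}\right)\) at each node, which combined with \(\chi(\O_{\tilde{C}})=\chi(\O_C)+\#C_{\text{sing}}\) gives exactly \(\deg_C(V\cap\nu_*F)=\rk F\cdot\mu^{\text{adj}}_C(F)\) --- and then extends the destabilizing quotient across \(\Delta\) by properness of the relative Quot scheme and compares Euler characteristics. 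The only points left implicit are that the family must be projective for Quot to be proper (true in all applications here) and that the limiting subsheaf may a priori have non-constant rank on the components of the reducible special fiber, which is harmless because rank and degree are extracted from the constant Hilbert polynomial exactly as you do.
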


\begin{lem} \cite[Lemma 4.1]{clv}\label{lem:naive}
Suppose that \(C  = X \cup Y\) is a reducible curve and \(V\) is a vector bundle on \(C\) such that \(V|_X\) and \(V|_Y\) are semistable.
Then \(V\) is semistable.
Furthermore, if one of \(V|_X\) or \(V|_Y\) is stable, then \(V\) is stable.
\end{lem}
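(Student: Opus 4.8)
The plan is to verify the defining inequality directly, exploiting the fact that the adjusted slope splits cleanly across the two components. Since $\tilde{C} = \tilde{X} \sqcup \tilde{Y}$ is the disjoint union of the normalizations of $X$ and $Y$, any subbundle $F \subseteq \nu^* V$ decomposes as $F = F_X \sqcup F_Y$ with $F_X \subseteq \nu^*(V|_X)$ and $F_Y \subseteq \nu^*(V|_Y)$; the compatibility $\dim F|_{\tilde{p}_1} = \dim F|_{\tilde{p}_2}$ built into the definition forces $F_X$ and $F_Y$ to share a common rank, say $r$, and trivially $\deg F = \deg F_X + \deg F_Y$.

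First I would partition $C_{\text{sing}}$ into three groups: the self-nodes of $X$, the self-nodes of $Y$, and the nodes lying on $X \cap Y$. Distributing the codimension corrections in the definition of $\mu^{\text{adj}}_C(F)$ according to this partition, the first two groups reassemble into the adjusted slopes of $F_X$ on $X$ and of $F_Y$ on $Y$, yielding
\[
\mu^{\text{adj}}_C(F) = \mu^{\text{adj}}_X(F_X) + \mu^{\text{adj}}_Y(F_Y) - \frac{1}{r}\sum_{p \in X \cap Y} \codim_F\!\left(F|_{\tilde{p}_1} \cap F|_{\tilde{p}_2}\right).
\]
(When $X$ and $Y$ are smooth, as in our application, the self-node sums are empty and the first two terms are just $\mu(F_X)$ and $\mu(F_Y)$.)

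The two facts that finish the semistability claim are: (i) every term in the remaining sum is nonnegative, so discarding it only enlarges the right-hand side; and (ii) because restriction preserves rank, $\mu(V|_X) = \deg(V|_X)/\rk V$ and $\mu(V|_Y) = \deg(V|_Y)/\rk V$, so additivity of degree over the components of $C$ gives $\mu(V|_X) + \mu(V|_Y) = \mu(V)$. Invoking semistability of $V|_X$ and $V|_Y$ gives $\mu^{\text{adj}}_X(F_X) \le \mu(V|_X)$ and $\mu^{\text{adj}}_Y(F_Y) \le \mu(V|_Y)$, and combining these with (i) and (ii) produces $\mu^{\text{adj}}_C(F) \le \mu(V)$. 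For the stability refinement, I would observe that a proper subbundle has $0 < r < \rk V$, since a full-rank subbundle is all of $\nu^* V$; then $F_X$ is a proper nonzero subbundle of $V|_X$, and if $V|_X$ is stable the inequality $\mu^{\text{adj}}_X(F_X) < \mu(V|_X)$ is strict, which propagates to make the entire chain strict.

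The main obstacle is really just the bookkeeping behind the displayed identity: one must track the three kinds of nodes correctly and confirm that precisely the corrections at the $X \cap Y$ nodes—the ones not absorbed into $\mu^{\text{adj}}_X$ and $\mu^{\text{adj}}_Y$—are the nonnegative quantities being discarded. Once that decomposition is in hand, the result is forced by the slope identity $\mu(V|_X) + \mu(V|_Y) = \mu(V)$, which is immediate from the observation that all of the relevant bundles restrict to rank $\rk V$.
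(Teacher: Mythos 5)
Your proof is correct and takes essentially the same route as the proof of the cited result \cite[Lemma 4.1]{clv} (the paper itself only quotes the lemma): decompose a constant-rank subbundle \(F \subseteq \nu^*V\) over \(\tilde{X} \sqcup \tilde{Y}\), discard the nonnegative codimension corrections at the nodes of \(X \cap Y\), and combine semistability of \(V|_X\) and \(V|_Y\) with the identity \(\mu(V|_X) + \mu(V|_Y) = \mu(V)\). The only point worth stating explicitly is that \(\mu^{\text{adj}}\) is applied to subbundles of constant rank across the components (which is what makes \(\rk F\) and the codimensions well defined), and that is exactly the reading of the definition you adopt.
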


\subsection{Elementary modifications of normal bundles}\label{sec-prelim-elementary}

In this section, we briefly recall without proof the definition and basic properties of elementary modifications of a vector bundle on a scheme.  For a more detailed exposition, see \cite[\S2-6]{aly}.  Other treatments can be found in \cite[\S2.3-2.4]{clv} and \cite[\S3.1-3.2]{lv22}.

Given a vector bundle \(V\) on a scheme \(X\), an effective Cartier divisor \(D \subset X\), and a subbundle \(F \subset V|_D\), the \defi{negative elementary modification \(V[D \negmod F]\) of \(V\) along \(D\) towards \(F\)} is defined by the exact sequence
\[0 \to V[D \negmod F] \to V \to V|_D / F \to 0.\]
We write \(V[D\posmod F] \colonequals V[D \negmod F](D)\) for the \defi{positive modification of \(V\) along \(D\) towards \(F\)}.  The modification \(V[D \posmod F]\) is naturally isomorphic to \(V\) on the complement of the divisor \(D\).  In this way we can easily define multiple modifications \(V[D_1 \posmod F_1][D_2 \posmod F_2]\) when the supports of \(D_1\) and \(D_2\) are disjoint.  

When the supports of the \(D_i\) meet, subbundles of \(V|_{D_i}\) are insufficient to define multiple modifications. In this context, we always assume that \(F_i\) extends to a subbundle of \(V\) in an open neighborhood \(U_i\) of \(D_i\).  If \(F_2|_{U_2 \smallsetminus D_1}\) extends to a subbundle over all of \(U_2\) then it does so uniquely and \(V[D_1 \posmod F_1][D_2 \posmod F_2]\) denotes the modification of \(V[D_1 \posmod F_1]\) towards this subbundle along \(D_2\).  
For example, the multiple modification \(V[D \posmod F][D \posmod F]\) denotes the modification of \(V[D \posmod F]\) along \(D\) towards the subbundle of \(V[D \posmod F]\) corresponding to \(F\), which is itself \(F(D)\).
The general situation of multiple modification is studied in \cite[\S2]{aly}.  In this paper when we need multiple modifications, the extension will be clear, and so we won't need this general framework.

\medskip

A simplifying special case of elementary modifications is when \(F\) is a direct summand of \(V\simeq F \oplus V'\) and consequently \(V[D \posmod F] \simeq F(D) \oplus V'\).  
If \(F\) is only a direct summand of \(V|_D \simeq F \oplus G\), we still have an explicit description along \(D\):
\begin{equation}\label{eq:direct_on_D}
E[D \posmod F]|_D \simeq F \otimes \O_D(D) \oplus G.
\end{equation}
\noindent
More generally, if \(V\) sits in an exact sequence
\begin{equation}\label{eq:sample_exact_seq}
0 \to S \to V \to Q \to 0,
\end{equation}
then we obtain an induced exact sequence with the modification \(V[ D \posmod F]\) that captures how the subbundle \(F\) sits with respect to the sequence \eqref{eq:sample_exact_seq}.  We will only make use of the following two special cases of this.  First suppose that \(F \cap S\) is flat over the base \(X\).  In this case \eqref{eq:sample_exact_seq} induces the exact sequence
\begin{equation}\label{eq:exact_flat}
0 \to S[D \posmod (F \cap S)] \to V[D \posmod F] \to Q[D \posmod F/(F\cap S)] \to 0.
\end{equation}
Second, suppose that \(X = C\) is a smooth curve and \(F \subset V\) is a line subbundle.  By combining modifications with disjoint supports, it suffices to consider the case that \(D = np\) for a point \(p \in C\). Let \(k'\) be the order to which the fiber of \(F\) is contained in the fiber of \(S\) in a neighborhood of \(p\). If \(F \) is a subbundle of \(S\), then \(k' = \infty\).  Let \(k = \min(k', n)\).  In this case \eqref{eq:sample_exact_seq} induces the exact sequence
\begin{equation}\label{eq:exact_order}
0 \to S[kp \to F|_{kp}] \to V[np \to F] \to Q[(n-k)p \to \bar{F}] \to 0,
\end{equation}
where \(\bar{F}\) is the saturation of the image of \(F\) in \(Q\).  In the special cases of \(k' = 0\) or \(\infty\) the two sequences \eqref{eq:exact_flat} and \eqref{eq:exact_order} agree.

\medskip

We will primarily work with elementary modifications of the normal bundle of a curve \(C \subset \pp^r\) towards pointing bundles, whose definition we now recall.  Given any linear space \(\Lambda \subset \pp^r\), the projection \(\pi\) from \(\Lambda\), when restricted to \(C\), is unramified on an open \(U_{\Lambda} \subset C\).  If \(U_\Lambda\) is dense in \(C\) and contains \(C^{\text{sing}}\), then the relative tangent sheaf of the map \(\pi\) uniquely extends to a rank \((\dim \Lambda + 1)\) subbundle of \(N_C\), which we denote by \(N_{C \to \Lambda}\) and call the \defi{pointing bundle towards \(\Lambda\)}.  The \defi{pointing bundle exact sequence} is
\[0 \to N_{C \to \Lambda} \to N_C \to \pi^*N_{\pi(C)}(C \cap \Lambda) \to 0.\]
When \(\Lambda \subset \Psi\) are nested subspaces, we have an analogous pointing bundle exact sequence
\begin{equation}\label{eq:pointing2}
    0 \to N_{C \to \Lambda} \to N_{C \to \Psi} \to \pi^*N_{\bar{C} \to \bar{\Psi}}(C\cap \Lambda) \to 0,
\end{equation} 
where \(\bar{\Psi}\) is the projection of \(\Psi\) from \(\Lambda\). 
We abbreviate and write \(N_C[p \posmod \Lambda] \colonequals N_C[p \posmod N_{C \to \Lambda}]\) for modifications towards pointing bundles.

Suppose that \(C\) is a curve on a smooth variety \(X\), and \(M\) is any smooth subvariety
meeting \(C\) quasi-transversely at a point \(p\). Then we write
\[N_C[p \posmodalong M] \colonequals N_C[p \posmod T_p M] \qand N_C[p \negmodalong M] \colonequals N_C[p \negmod T_p M],\]
where \(T_pM\) maps to \(N_C|_p\) via the quotient map \(T_pX \to N_C|_p\).
Observe that when \(M\) is itself a linear space through \(p\), then \(N_C[p \posmodalong M]\) is \emph{not}
isomorphic to \(N_C[p \posmod M]\) because they have different degrees.
Instead, if \(\Lambda \subset M\) is a complementary linear space to \(p\), then
\[N_C[p \posmodalong M] \simeq N_C[p \posmod \Lambda] \qand N_C[p \negmodalong M] \simeq N_C[p \negmod \Lambda].\]
If \(M \cap C = \{p_1, p_2, \ldots, p_n\}\), with all points of intersection
quasi-transverse, then we write
\[N_C[\posmodalong M] \colonequals N_C[p_1 \posmodalong M] \cdots [p_n \posmodalong M].\]

Our interest in modifications towards pointing bundles is rooted in the following result of Hartshorne--Hirschowitz, describing the normal bundle of a nodal curve.

\begin{lem}[{\cite[Corollary~3.2]{HH}}]\label{lem:hh}
Let \(X \cup Y\) be a connected nodal curve in \(\pp^r\). Then
\[N_{X \cup Y}|_X \simeq N_X[\posmodalong Y].\]
\end{lem}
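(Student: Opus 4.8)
The plan is to reduce the asserted isomorphism to a local statement at the nodes $p_1, \dots, p_n$ of $X \cap Y$ and to pin down the relevant local model by an explicit coordinate computation. Away from $X \cap Y$, the subschemes $X$ and $X \cup Y$ of $\pp^r$ coincide, so there is nothing to prove there: both sides restrict to $N_X$. The content is concentrated entirely at the nodes, and since $N_X[\posmodalong Y]$ is by definition assembled from independent local modifications $N_X[p_i \posmodalong Y]$, it suffices to treat a single node $p$ and then combine.

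First I would produce a canonical comparison map between the two bundles. The inclusion of ideal sheaves $\sI_{X \cup Y} \subseteq \sI_X$ induces a map of conormal sheaves $\sI_{X\cup Y}/\sI_{X\cup Y}^2 \to \sI_X/\sI_X^2$; restricting to $X$ and dualizing (both $N_X$ and $N_{X\cup Y}$ are locally free, as nodal curves are local complete intersections) yields a natural map $N_X \to N_{X\cup Y}|_X$ of vector bundles on $X$. This map is visibly an isomorphism on $X \setminus (X\cap Y)$, so it exhibits $N_{X\cup Y}|_X$ as a modification of $N_X$ supported at the nodes, and the remaining task is to identify precisely which modification.

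The heart of the argument is the local computation at a node $p$. Working in \'etale (or formal) coordinates $z_1, \dots, z_r$ centered at $p$ in which quasi-transversality lets me arrange $X = \{z_2 = \cdots = z_r = 0\}$ and $Y$ tangent to the $z_2$-axis, I would compute $\sI_{X \cup Y} = \sI_X \cap \sI_Y = (z_1 z_2, z_3, \dots, z_r)$. Restricting the conormal sequence to $X$ and dualizing then shows that, with $t = z_1$ a local coordinate on $X$, the map $N_X \to N_{X\cup Y}|_X$ is an isomorphism in the directions $\partial_{z_3}, \dots, \partial_{z_r}$ and is multiplication by $t$ in the direction $\partial_{z_2}$ spanning $T_p Y$. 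Equivalently, $N_{X\cup Y}|_X$ is obtained from $N_X$ by twisting up $t$ in exactly the $T_p Y$-direction, a length-one positive elementary modification towards the line $\langle \partial_{z_2} \rangle = T_p Y$.

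Finally I would carry out the parallel local computation for the right-hand side. Unwinding the definitions, $N_X[p \posmodalong Y] = N_X[p \posmod N_{X \to \Lambda}]$ for $\Lambda \subset T_p Y$ a complement to $p$, and the pointing bundle $N_{X \to \Lambda}$ has fiber $T_p Y = \langle \partial_{z_2} \rangle$ at $p$; computing $N_X[p \negmod N_{X \to \Lambda}](p)$ in the same coordinates produces exactly $\langle t^{-1} \partial_{z_2}, \partial_{z_3}, \dots, \partial_{z_r} \rangle$, matching the local description of $N_{X \cup Y}|_X$ above. The canonical map therefore extends to an isomorphism $N_X[\posmodalong Y] \xrightarrow{\sim} N_{X\cup Y}|_X$. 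The main obstacle is the careful local bookkeeping: verifying that the modification is supported in precisely the tangent direction of $Y$ and has length exactly one, and matching this against the twist convention in the definition of $N_X[p \posmod N_{X \to \Lambda}]$. The reduction to coordinate axes is harmless, since both the restricted normal bundle and the elementary modification are \'etale-local constructions compatible with the chosen identification.
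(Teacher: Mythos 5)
The paper offers no proof of this lemma: it is quoted verbatim from Hartshorne--Hirschowitz \cite[Corollary~3.2]{HH} (see also \cite[\S 5]{aly}), so there is no internal argument to compare against. Your proposal is a correct, self-contained reconstruction of the standard proof from that reference: the comparison map \(N_X \to N_{X\cup Y}|_X\) induced by \(\sI_{X\cup Y}\subseteq \sI_X\), the local identity \(\sI_{X\cup Y}=(z_1z_2,z_3,\dots,z_r)\) showing the cokernel is a length-one skyscraper at each node in the \(T_pY\)-direction, and the verification that \(N_X[p\posmod N_{X\to\Lambda}]\) is locally \(\langle t^{-1}\partial_{z_2},\partial_{z_3},\dots,\partial_{z_r}\rangle\) all check out against the paper's conventions.
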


Finally, we recall that the normal bundle of a curve can be related to the normal
bundle of its proper transform in a blowup via modifications.
The simplest case is that of a smooth curve lying on a smooth variety \(C \subset X\),
and a blowup \(\beta \colon \Bl_Y X \to X\) along a smooth subvariety \(Y \subset X\) meeting \(C\)
quasi-transversely at a single point \(p\).
Then the normal bundles of \(C\) in \(X\), and of its proper transform
in the blowup, are related as follows:
\[N_{C / \Bl_Y X} \simeq N_{C/X}[p \negmodalong Y] \quad \text{or equivalently} \quad N_{C/X} \simeq N_{C / \Bl_Y X}[p \posmodalong \beta^{-1}(p)].\]
Via the rules for combining modifications, these formulas immediately imply several generalizations.
We will need the following case:
Suppose that \(Y' \subset Y\) is a smooth subvariety, also passing through \(p\).
Write \(t\) for the natural rational map from the exceptional divisor of \(\Bl_{Y'} X\)
to the exceptional divisor of \(\Bl_Y X\). Then, for any smooth subvariety \(M\) of the image of \(t\)
meeting \(C\) at \(p\):
\begin{equation}\label{eq:blowup_modification}
N_{C / \Bl_Y X}[p \posmodalong M] \simeq N_{C / \Bl_{Y'} X}[p \posmodalong t^{-1}(M)].
\end{equation}

\subsection{The Farey sequence}
Recall that the \defi{\(N\)-Farey sequence} is the sequence of fractions whose denominators are bounded by \(N\) in lowest terms. We refer the reader to \cite{HW} for the properties of the Farey sequence.

\begin{lem}\label{lem:FF}
Let \(V\) be a vector bundle of slope \(\mu(V) = \frac{p}{q}\) in lowest terms and suppose that 
\[0 \to S \to V \to Q \to 0 \]
is an exact sequence of vector bundles such that either \(\mu(S)\) is an adjacent \(q\)-Farey fraction to \(\mu(V)\) with \(\gcd(\deg S, \rk S) = 1\), or similarly for \(Q\).
If both \(S\) and \(Q\) are stable, then any destabilizing subsheaf of \(V\) is isomorphic to either \(S\) or \(Q\).
\end{lem}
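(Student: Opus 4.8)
The plan is to reduce the whole question to a single additive numerical invariant and then exploit the Farey-neighbor hypothesis to pin it down. For a subsheaf $W \subseteq V$, set
\[\delta(W) \colonequals \rk(V)\deg(W) - \rk(W)\deg(V),\]
which is additive in short exact sequences and satisfies $\delta(W) \geq 0$ if and only if $\mu(W) \geq \mu(V)$. Writing $\mu(V) = p/q$ in lowest terms and $\rk(V) = kq$, one computes $\delta(W) = k\bigl(q\deg(W) - p\rk(W)\bigr) \in k\mathbb{Z}$, so a strictly destabilizing $W$ satisfies $\delta(W) \geq k$. By symmetry I may assume it is $\mu(S)$ that is an adjacent $q$-Farey fraction with $\gcd(\deg S, \rk S) = 1$ (the hypothesis on $Q$ is handled identically). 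The Farey-neighbor relation together with coprimality gives $\lvert q\deg(S) - p\rk(S)\rvert = 1$, i.e.\ $\delta(S) = \pm k$; since $\delta(S) + \delta(Q) = 0$, also $\delta(Q) = \mp k$. In particular $\mu(S)$ and $\mu(Q)$ lie on opposite sides of $\mu(V)$, each as close to $\mu(V)$ as the denominators allow.

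Now let $W$ be a destabilizing subsheaf and restrict the defining sequence to get
\[0 \to W \cap S \to W \to \Im(W \to Q) \to 0,\]
with $W \cap S \subseteq S$ and $\Im(W \to Q) \subseteq Q$. Using stability of $S$ and $Q$ — so that every subsheaf has slope at most $\mu(S)$, respectively $\mu(Q)$ — I bound each $\delta$-contribution: the factor whose slope lies on the low side of $\mu(V)$ contributes at most $0$, while the factor on the high side contributes at most $k$, with equality forcing it to be all of $S$ or all of $Q$. By additivity this yields $0 \leq \delta(W) \leq k$, and since $\delta(W)$ is a multiple of $k$, we get $\delta(W) \in \{0, k\}$. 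The generic case $\delta(W) = k$ forces the low-side piece to vanish and the high-side piece to be the whole of $S$ or $Q$, so that $W \cong S$ or $W \cong Q$; in the latter situation $W$ maps isomorphically onto $Q$, producing a splitting of the original sequence.

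The main obstacle is the equality analysis in the previous paragraph. One must argue carefully that a full-rank subsheaf of $Q$ attaining the maximal slope $\mu(Q)$ is in fact all of $Q$ (so that $\Im(W \to Q) = Q$, not merely a generically-equal subsheaf), which is where stability of $Q$ is used in an essential way, and one must keep straight the two Farey configurations and the correct direction of the stability inequalities for $S$ and $Q$. The borderline value $\delta(W) = 0$, meaning $\mu(W) = \mu(V)$, is the genuinely delicate point: it is precisely the case that must be excluded to promote semistability to stability, and it does not arise in the coprime setting $k = 1$ (where $\rk V = q$), since then no proper subsheaf can have slope exactly $\mu(V)$.
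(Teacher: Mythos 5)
Your proof is correct, and its skeleton matches the paper's: both restrict the extension to a candidate destabilizer \(W\) via \(0 \to W \cap S \to W \to \Im(W \to Q) \to 0\) and then combine stability of \(S\) and \(Q\) with the Farey hypothesis to squeeze \(\mu(W)\). The genuine difference is in how the numerics are organized. The paper works with slopes directly: it first invokes two properties of Farey sequences (the determinant criterion and the mediant rule) to show that the slope of whichever of \(S\), \(Q\) is \emph{not} covered by the hypothesis is also an adjacent \(eq\)-Farey fraction to \(\mu(V)\), and then runs a case analysis (next versus previous fraction), halving the number of cases by dualizing the sequence. You instead linearize everything through the additive invariant \(\delta(W) = \rk(V)\deg(W) - \rk(W)\deg(V)\), valued in \(k\mathbb{Z}\) when \(\rk V = kq\): adjacency plus coprimality becomes \(\delta(S) = \pm k\), additivity hands you \(\delta(Q) = \mp k\) with no mediant argument, and the high-side/low-side dichotomy treats all four sign configurations uniformly without dualizing. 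This is a tidier, more self-contained route to the same squeeze \(k \le \delta(W) = \delta(W\cap S) + \delta(\Im(W\to Q)) \le k + 0\). Two minor remarks: in the equality analysis, a full-rank subsheaf of \(Q\) of the same degree equals \(Q\) simply because the quotient is a torsion sheaf of degree zero (stability is not needed for that particular step), and your closing worry about the borderline value \(\delta(W) = 0\) is moot for the lemma as stated, since a destabilizing subsheaf has \(\mu(W) > \mu(V)\) strictly, which already forces \(\delta(W) = k\).
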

\begin{proof}
Suppose that \(V\) has degree \(ep\) and rank \(eq\) for some \(e \geq 1\).
Then the slope of the other bundle (\(\mu(Q)\) or \(\mu(S)\), respectively)
is an adjacent \(eq\)-Farey fraction; this can be seen using the following
two standard properties of adjacent Farey fractions:
\begin{itemize}
\item 
Two rational numbers in lowest terms, \(p_1/q_1\) and \(p_2/q_2\),
are adjacent in the \(\max(q_1, q_2)\)-Farey sequence if and only if
\[\det \begin{bmatrix} p_1 & p_2 \\ q_1 & q_2 \end{bmatrix} = \pm 1.\]
\item 
In this case, they are adjacent in the \(q\)-Farey sequence for any \(\max(q_1, q_2) \leq q < q_1 + q_2\),
and the next fraction appearing between them is 
\[\frac{p_1 + p_2}{q_1 + q_2}.\]
\end{itemize}

There are four cases to consider: \(\mu(S)\) or \(\mu(Q)\) is the next or previous \(eq\)-Farey fraction.
Up to replacing the sequence with its dual,
it suffices to consider the two cases that \(\mu(S)\) or \(\mu(Q)\) is the next Farey fraction.
Let \(F\) be any subsheaf of \(V\).
Then \(F\) has a filtration
\[0 \to F \cap S \to F \to \Im(F \to Q) \to 0.\]

\smallskip
\noindent
\emph{If \(\mu(S)\) is the next Farey fraction:}
Since \(F \cap S\) is a subsheaf of \(S\), we have
\(\mu(F \cap S) \leq \mu(S)\)
with equality only if \(F\) contains \(S\).  Since \(\mu(V)\) is the previous \(eq\)-Farey fraction to \(\mu(S)\),
if equality does not hold, then \(\mu(F \cap S) \leq \mu(V)\).  Similarly 
\(\mu(\Im(F \to Q)) \leq \mu(Q) < \mu(V)\).
Hence, \(\mu(F) \leq \mu(V)\) unless \(F\) contains \(S\).  Furthermore, if \(F\) properly contains \(S\), then \(\mu(F) < \mu(S)\) and hence \(\mu(F) \leq \mu(V)\) since \(\mu(S)\) is the next Farey fraction.

\smallskip
\noindent
\emph{If \(\mu(Q)\) is the next Farey fraction:}
Similarly,
\(\mu(\Im(F \to Q)) \leq \mu(Q)\)
with equality only if \(F \to Q\) is surjective.
Since \(\mu(V)\) is the previous \(eq\)-Farey fraction to \(\mu(Q)\),
if equality does not hold, then \(\mu(\Im(F \to Q)) \leq \mu(V)\).  Similarly
\(\mu(F \cap S) \leq \mu(S) < \mu(V)\).
Hence, \(\mu(F) \leq \mu(V)\) unless \(F \to Q\) is surjective.  Further, if \(F \to Q\) is not an isomorphism, then \(\mu(F) < \mu(Q)\) and hence \(\mu(F) \leq \mu(V)\) since \(\mu(Q)\) is the next Farey fraction.
\end{proof}

\begin{lem} \label{lem:blackmagic-rationalbase} Suppose that \(\sV\) is a family of vector bundles on a positive-genus curve \(C\)
parameterized by a rational base \(B\).
Suppose that, for \(b_1, b_2 \in B\), the specializations \(\sV|_{b_i}\)
fit into exact sequences
\[0 \to S_i \to \sV|_{b_i} \to Q_i \to 0 \]
satisfying the hypotheses of Lemma~\ref{lem:FF} with \(\mu(S_1) = \mu(S_2)\).
If \(c_1(S_1) \neq c_1(S_2)\), then the general fiber of \(\sV\) is semistable.
\end{lem}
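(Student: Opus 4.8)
The plan is to argue by contradiction: assuming the general fiber of \(\sV\) is not semistable, I will produce destabilizing subsheaves \(F_1 \subset \sV|_{b_1}\) and \(F_2 \subset \sV|_{b_2}\) having equal first Chern class, and then invoke Lemma~\ref{lem:FF} to force \(c_1(S_1) = c_1(S_2)\), contradicting the hypothesis. The two places where positivity of the genus and rationality of \(B\) enter are exactly the points where I will pin down that the relevant first Chern classes are constant across the family.

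First I would set up the relative picture. Since \(B\) is rational and \(C\) has positive genus, the determinant of \(\sV\) gives a morphism \(B \to \Pic(C)\) that must be constant, because a rational variety admits no nonconstant morphism to the positive-dimensional abelian variety \(\Pic^0(C)\); thus \(c_1(\sV|_b)\) is independent of \(b\), say equal to \(M\), and all fibers share the slope \(\mu(\sV)\). Now suppose the general fiber is unstable. Over a dense open \(U \subseteq B\) the Harder--Narasimhan polygon is constant, so the maximal destabilizing subsheaves \(F_b \subset \sV|_b\) form a flat family of fixed rank \(r\) and degree \(d\) with \(\mu_{\max} \colonequals d/r > \mu(\sV)\). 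This family is a section \(s \colon U \to \operatorname{Quot}\) of the relative Quot scheme \(\operatorname{Quot} \to B\) parameterizing subsheaves of \(\sV\) of numerical type \((r,d)\).

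The key step is to control the first Chern class of these subsheaves. Taking determinants of the universal subsheaf yields a morphism \(\operatorname{Quot} \to \Pic^d(C)\), \([F] \mapsto c_1(F)\), and composing with \(s\) gives a morphism \(U \to \Pic^d(C)\); once again, because \(U\) is rational and \(\Pic^d(C)\) is a torsor under an abelian variety, this morphism is constant, equal to some \(L \in \Pic^d(C)\). Since \(\operatorname{Quot} \to B\) is proper, the closure \(Z \colonequals \overline{s(U)}\) is proper over \(B\) with image containing the dense set \(U\), hence surjects onto \(B\). Choosing points of \(Z\) over \(b_1\) and \(b_2\) produces subsheaves \(F_1 \subset \sV|_{b_1}\) and \(F_2 \subset \sV|_{b_2}\); flatness over \(Z\) preserves the type \((r,d)\), so each \(F_i\) is a destabilizing subsheaf of slope \(\mu_{\max} > \mu(\sV)\), and since \(c_1\) equals \(L\) on the dense locus \(s(U) \subset Z\) it equals \(L\) on all of \(Z\), giving \(c_1(F_1) = c_1(F_2) = L\).

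Finally I would conclude via Lemma~\ref{lem:FF}. Because \(\mu(S_i)\) and \(\mu(Q_i)\) lie on opposite sides of \(\mu(\sV)\), exactly one of them exceeds \(\mu(\sV)\), and the hypothesis \(\mu(S_1) = \mu(S_2)\) makes this choice uniform in \(i\). Lemma~\ref{lem:FF} then identifies each destabilizing \(F_i\) with whichever of \(S_i, Q_i\) has slope greater than \(\mu(\sV)\): if this is \(S_i\), then \(c_1(S_i) = c_1(F_i) = L\) for both \(i\); if it is \(Q_i\), then \(c_1(S_i) = M - c_1(Q_i) = M - L\) for both \(i\). In either case \(c_1(S_1) = c_1(S_2)\), the desired contradiction. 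I expect the main obstacle to be the bookkeeping in the third paragraph: checking that the determinant genuinely descends to a morphism on the Quot scheme and that the flat limit \(F_i\) retains both its slope and its first Chern class, so that the principle ``no nonconstant maps from rational varieties to abelian varieties'' can be transported correctly across the properness of \(\operatorname{Quot}\).
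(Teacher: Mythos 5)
Your proof is correct and takes essentially the same approach as the paper: rationality of the base forces \(c_1\) of the maximal destabilizing subsheaf to be constant, and Lemma~\ref{lem:FF} identifies its specialization at each \(b_i\) with whichever of \(S_i\), \(Q_i\) has slope exceeding \(\mu(\sV)\) (uniformly in \(i\), since \(\mu(S_1)=\mu(S_2)\)), contradicting \(c_1(S_1)\neq c_1(S_2)\). The only difference is cosmetic: you justify the limiting step via properness of the relative Quot scheme, where the paper simply takes the limit of \(\sF\) along an arc approaching \(b_i\).
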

\begin{proof}
Suppose that \(\sV|_b\) is unstable for \(b \in B\) general.  Then there exists a destabilizing subbundle \(\sF \subset \sV|_b\).
Consider the rational map
\[c_1(\sF) \colon B \dashrightarrow \Pic C.\]
Since \(B\) is rational, this map is constant.

On the other hand, we may specialize to the fiber over \(b_i\).
As we approach along any arc, \(\sF|_b\) limits to one of \(S_i\) or \(Q_i\) (based on which one has slope greater than \(\mu(\sV)\)) by Lemma~\ref{lem:FF}. Therefore, \(c_1(\sF)\) extends to a regular map in a neighborhood
of \(b_i\). Our assumption
that \(c_1(S_1) \neq c_1(S_2)\) (and so also \(c_1(Q_1) \neq c_1(Q_2)\))
then gives a contradiction.
\end{proof}

\subsection{\boldmath Natural bundles on a genus \(1\) curve} Let \(E\) be a genus \(1\) curve.  We say that a map \(f \colon \Pic^a E \to \Pic^bE\) is \defi{natural} if for any automorphism \(\theta \colon E \to E \), the following diagram commutes:

\begin{center}
\begin{tikzcd}
\Pic^a E \arrow[r, "f"] \arrow[d, "\theta^*"]&\Pic^b E \arrow[d, "\theta^*"] \\
\Pic^a E \arrow[r, "f"]&\Pic^b E
\end{tikzcd}
\end{center}

\begin{lem}\label{lem:blackmagic-natural}
If \(f \colon \Pic^a E \to \Pic^b E\) is natural, then \(a\) divides \(b\).
\end{lem}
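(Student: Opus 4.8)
The plan is to reduce the statement to a comparison of degrees of isogenies. Fix a point $O \in E$, making $E$ into an elliptic curve with group law $\oplus$ and identifying its Jacobian $A \colonequals \Pic^0 E$ with $E$ via $p \mapsto [p - O]$. For each integer $n$, Abel's theorem provides a natural isomorphism of $A$-torsors $\sigma_n \colon \Pic^n E \xrightarrow{\sim} A$, $[D] \mapsto [D] - n[O]$. Transporting $f$ through these isomorphisms, it suffices to analyze the induced morphism $\bar f \colonequals \sigma_b \circ f \circ \sigma_a^{-1} \colon A \to A$.

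I would restrict attention to the translation automorphisms $t_c \colon E \to E$, $p \mapsto p \oplus c$, for $c \in A = E$; these alone suffice. The key input is that translations act trivially on $\Pic^0 E$, so a direct computation of $t_c^*[D] - n[O]$ shows that $t_c^*$, viewed through $\sigma_n$, is the translation $\tau_{-nc}\colon y \mapsto y \ominus nc$ of $A$. The naturality square for $t_c$ then becomes the functional equation $\bar f \circ \tau_{-ac} = \tau_{-bc} \circ \bar f$ for all $c \in A$. Evaluating at the origin and letting $c$ vary yields $\bar f(\ominus a c) = d \ominus bc$ with $d \colonequals \bar f(O)$; since multiplication by $a$ is surjective, this is an equality of morphisms on all of $A$, and composing with $[-1]$ gives
\[ \bar f \circ [a] = \tau_d \circ [b],\]
where $[n]$ denotes multiplication by $n$ on $A$.

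Finally I would compare degrees. Since $\deg[n] = n^2$ for every nonzero $n$ on an elliptic curve (in arbitrary characteristic, counting inseparability) and translations have degree $1$, the displayed identity gives $\deg(\bar f) \cdot a^2 = b^2$ whenever $a \neq 0$ and $\bar f$ is nonconstant; hence $a^2 \mid b^2$ and therefore $a \mid b$. The degenerate cases are immediate: if $\bar f$ is constant then $\tau_d \circ [b]$ is constant, forcing $b = 0$, and if $a = 0$ the same equation again forces $b = 0$, so that $a \mid b$ holds trivially. I expect the only delicate points to be the characteristic-free input $\deg[n] = n^2$ (so that the clean implication $a^2 \mid b^2 \Rightarrow a \mid b$ can be used in place of a more fragile argument with torsion subgroups) together with the fact that translations act trivially on $\Pic^0$, which is precisely what forces $t_c^*$ to be a translation rather than a general affine self-map of each $\Pic^n E$.
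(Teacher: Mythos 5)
Your argument is correct and reaches the conclusion, but it finishes differently from the paper. The paper's proof is a single sentence: translation by a point of order \(a\) acts trivially on \(\Pic^a E\), hence by naturality it fixes the (nonempty) image of \(f\) in \(\Pic^b E\); since a nontrivial translation of a torsor has no fixed points, it is trivial on all of \(\Pic^b E\), which says the \(a\)-torsion of \(\Pic^0 E\) lies in the \(b\)-torsion, whence \(a \mid b\). You use the very same input --- that \(t_c^*\) acts on \(\Pic^n E\) as translation by \(-n[c-O]\) --- but rather than specializing to torsion points you keep all translations, derive the functional equation \(\bar f \circ [a] = \tau_d \circ [b]\), and compare degrees of isogenies to get \(\deg(\bar f)\cdot a^2 = b^2\), hence \(a \mid b\). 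The trade-off is genuine. Your route avoids the characteristic-\(p\) delicacy you flag: a supersingular curve has no point of exact order \(p\), so the paper's literal appeal to ``a point of order \(a\)'' must be read scheme-theoretically, or reduced to the prime-to-\(p\) and ordinary cases, when \(p \mid a\). On the other hand, your degree count requires \(f\) (hence \(\bar f\)) to be a morphism of varieties, whereas the paper's torsion argument needs nothing beyond \(f\) being a map of sets --- a point worth noting since in the applications \(f\) sends \(\O_E(1)\) to the determinant of a maximal destabilizing subbundle or Harder--Narasimhan piece, which is a priori only a set-theoretic assignment. Provided one grants (as the paper implicitly does in calling \(f\) a ``map'' between varieties) that \(f\) is a morphism, your proof is complete and arguably more robust in positive characteristic.
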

\begin{proof}
Translation by a point of order \(a\) is the identity on \(\Pic^aE\), and so must also be on \(\Pic^bE\).
\end{proof}

\section{Our degeneration}\label{sec:our_degen}

Let \(E \subset \pp^{g-1}\) be an elliptic normal curve.  Let \(H\simeq \pp^{g-2}\) be a general hyperplane and let \(\Gamma \colonequals E \cap H\) be the hyperplane section of \(E\).  Let \(R\) be a general rational curve of degree \(g-2\) in \(H\), meeting \(E\) quasi-transversely at the points of \(\Gamma\).  Then by  \cite[Lemma 5.7]{lv22}, the curve \(E \cup R\) is a Brill--Noether curve of degree \(2g - 2\) and genus \(g\); i.e., it is a degeneration of a canonical curve.

\begin{lem}[{\cite[Lemma 5.8 and Proposition 13.7]{lv22}}]\label{lem:N_restrict_R}
We have
\[N_{E \cup R}|_R \simeq \begin{cases} \O(g+1)^{\oplus (g-2)} &: \text{\(g\) odd} \\ \O(g) \oplus \O(g+1)^{\oplus (g-4)} \oplus \O(g+2) &: \text{\(g\) even.} \end{cases}\]
\end{lem}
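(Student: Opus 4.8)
The plan is to apply the Hartshorne--Hirschowitz formula (Lemma~\ref{lem:hh}), which gives \(N_{E\cup R}|_R \simeq N_R[\posmodalong E]\), and then to compute this iterated positive modification explicitly on \(R \cong \pp^1\). The first step is to pin down \(N_R \colonequals N_{R/\pp^{g-1}}\). Since \(R\) is the rational normal curve of degree \(g-2\) spanning the hyperplane \(H \cong \pp^{g-2}\), its normal bundle inside \(H\) is balanced, \(N_{R/H} \simeq \O(g)^{\oplus(g-3)}\), while \(N_{H/\pp^{g-1}}|_R \simeq \O_R(1) \simeq \O(g-2)\). These fit into the exact sequence
\[0 \to N_{R/H} \to N_R \to N_{H/\pp^{g-1}}|_R \to 0.\]
(The extension in fact splits, since \(\operatorname{Ext}^1(\O(g-2), \O(g)^{\oplus(g-3)}) = H^1(\O(2))^{\oplus(g-3)} = 0\), though I will only need the sequence itself.)

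Next I would analyze the \(g\) modifications. At each point \(p_i\) of \(\Gamma = E \cap H\) the modification direction is the image of the tangent line \(T_{p_i}E\) in \(N_R|_{p_i}\). For general \(H\) the intersection \(E \cap H\) is transverse, so \(T_{p_i}E \not\subset H\); hence this direction maps isomorphically onto the quotient line \(N_{H/\pp^{g-1}}|_R|_{p_i}\) and meets the subbundle \(N_{R/H}|_{p_i}\) trivially. A local computation at each \(p_i\) then shows that the positive modification leaves \(N_{R/H}\) unchanged and raises the degree of the quotient by one at \(p_i\). Since the \(p_i\) form a divisor of degree \(g\) on \(R\), the quotient becomes \(\O(g-2)(\sum_i p_i) \simeq \O(2g-2)\), and writing \(N' \colonequals N_R[\posmodalong E]\) we obtain
\[0 \to \O(g)^{\oplus(g-3)} \to N' \to \O(2g-2) \to 0. \tag{$\ast$}\]

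It remains to read off the splitting type of \(N'\) from \((\ast)\). As \(\deg N' = (g-2)(g+1)\) and \(\rk N' = g-2\), the balanced bundle would be \(\O(g+1)^{\oplus(g-2)}\), and \(N'\) is balanced if and only if \(h^0(N'(-g-2)) = 0\). Twisting \((\ast)\) by \(\O(-g-2)\) and taking cohomology identifies \(h^0(N'(-g-2))\) with the kernel of the connecting map
\[\delta \colon H^0(\O(g-4)) \to H^1(\O(-2)^{\oplus(g-3)}),\]
a map between spaces of equal dimension \(g-3\). This \(\delta\) is cup product with the class of \((\ast)\), which is concentrated at the points \(p_i\) and records the components \(\psi_i \in N_{R/H}|_{p_i}\) of the tangent directions \(T_{p_i}E\) normal to \(R\) within \(H\); up to residue weights it is the evaluation pairing \(\sigma \mapsto \sum_i \sigma(p_i)\,\psi_i\). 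More generally the full splitting type is governed by the ranks of the analogous connecting maps obtained by twisting \((\ast)\) by \(\O(-n)\).

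The main obstacle is precisely the rank of \(\delta\): I expect it to be an isomorphism (rank \(g-3\)) when \(g\) is odd, giving \(N' \simeq \O(g+1)^{\oplus(g-2)}\), and to have corank exactly \(1\) when \(g\) is even, forcing the single degree-\((g+2)\) sub-line-bundle and hence \(N' \simeq \O(g) \oplus \O(g+1)^{\oplus(g-4)} \oplus \O(g+2)\). This parity dichotomy is the heart of the statement, and it cannot follow from a naive genericity argument since the extension class is constrained by \(E\) rather than general. I would establish it by making \(\delta\) fully explicit in terms of the vectors \(\psi_i\) and analyzing the resulting \((g-3)\times(g-3)\) matrix, using that the \(p_i\) are a general hyperplane section of the elliptic normal curve \(E\) and that the \(\psi_i\) arise from its tangent lines; the expected mechanism is a symmetry of this matrix inherited from the genus-one structure that forces the corank to be even, hence \(0\) or \(1\) according to the parity of \(g\).
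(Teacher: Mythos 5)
The paper does not prove this lemma at all; it is imported from \cite[Lemma 5.8 and Proposition 13.7]{lv22}, so your attempt must stand on its own. Your reduction is sound as far as it goes: \(N_R \simeq \O(g)^{\oplus(g-3)} \oplus \O(g-2)\), transversality of \(H\) and \(E\) at each \(p_i\) guarantees that the modification direction is transverse to \(N_{R/H}\), and the resulting sequence \((\ast)\) together with the identification of \(h^0(N'(-g-2))\) with \(\ker\delta\) correctly reduces the lemma to the statement that \(\delta\) is an isomorphism for \(g\) odd and has corank exactly \(1\) for \(g\) even. But that is precisely where the proof stops. The parity dichotomy --- which you yourself identify as the heart of the statement --- is only asserted (``I expect'', ``I would establish it by''), and the proposed mechanism, a symmetry of the \((g-3)\times(g-3)\) matrix forcing even rank, is neither exhibited nor verified. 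Moreover, even granting such a skew-symmetry, it would only yield corank at least \(1\) when \(g\) is even; you would still owe the upper bounds (corank at most \(1\) in the even case, and nonvanishing of \(\det\delta\) in the odd case), each of which requires an actual computation or a degeneration argument. So the write-up is a correct reduction followed by a conjecture, not a proof.

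For comparison, the mechanism used in \cite{lv22} (and recalled in \S\ref{sec:HN} of this paper) for the even-genus case is geometric rather than linear-algebraic: \(E \cup R\) lies on the surface \(S\), a blown-up \(\pp^1 \times \pp^1\) in which \(R\) is the proper transform of the diagonal and none of the blown-up points lie on \(R\), so \(N_{E\cup R/S}|_R \simeq N_{R/S}(\Gamma) \simeq \O(2)(\Gamma) \simeq \O(g+2)\) is exhibited directly as the maximal destabilizing line subbundle, and the filtration by \(N_{E\cup R/Q}|_R\) accounts for the \(\O(g)\) quotient. If you wish to complete your argument along your own lines, you must make the matrix of \(\delta\) explicit in terms of the tangent directions \(\psi_i\) and the relation \(p_1 + \cdots + p_g \sim \O_E(1)\) and prove the rank statements; alternatively, constructing the surface \(S\) hands you the even-genus sub-line-bundle for free and reduces the remaining work to excluding anything more unbalanced.
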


By \cite[Lemma 4.1]{clv}, when \(g\) is odd, it suffices to show that \(N_{E \cup R}|_E\) is semistable to conclude that the normal bundle of a general canonical curve is semistable.  This is addressed in Section \ref{sec:restriction_to_E}.
When \(g\) is even, we will need to know that \(N_{E \cup R}|_E\) is semistable, and also that certain modifications of \(N_{E \cup R}|_E\), related to the Harder--Narasimhan (HN) filtration of \(N_{E \cup R}|_R\), are semistable.  We conclude this section with a brief geometric description of the HN-filtration, expanding on \cite[Section 13]{lv22}.

\subsection{\boldmath The HN-filtration when \(g\) is even}\label{sec:HN}

In this section, we suppose that \(g = 2n+2\) is even.  We first recall without proof some results we will need from \cite[Section 13]{lv22}.
Suppose that \(E \subset \pp^{2n+1}\) is an elliptic normal curve.  Let
\(p_1+ \cdots + p_{2n+2}\) be a general section of \(O_E(1)\).  Let \(q_1, \dots, q_{2n+2}\) be general points on \(\pp^1\).  By \cite[Lemma 13.1]{lv22}, there are exactly two degree \(n+1\) maps
\[f_i \colon E \to \pp^1\]
sending \(p_j\) to \(q_j\) for all \(1 \leq j \leq 2n+2\) (see also \cite{cps} and \cite{fli} for more general results of the type).  Together, these define a map 
\[\bar{f} \colon E \to \pp^1 \times \pp^1,\]
which is birational onto an \((n^2-1)\)-nodal curve of bidegree \((n+1, n+1)\) \cite[Lemma 13.2]{lv22}, none of whose nodes lie on the diagonal.

Let \(S\) denote the blowup of \(\pp^1 \times \pp^1\) at the \(n^2 - 1\) nodes of \(\bar{f}(E)\), with total exceptional divisor \(F\), and write
\(f \colon E \hookrightarrow S\)
for the resulting embedding.  By \cite[Lemma 13.3]{lv22}, the line bundle \(L = \O_S(n,n)(-F) = K_S(1,1)(E)\) on \(S\) restricts to
\[ L|_E \simeq \O_E(1,1) \simeq \O_E(p_1 + \cdots + p_{2n+2}).\]
Write \(\pi_i \colon S \to \pp^1\) for the two projections onto each factor of \(\pp^1 \times \pp^1\).  As computed in \cite{lv22}, \({\pi_i}_*L(-E) = R^1{\pi_i}_*L(-E) = 0\), and so \(H^0(L(-E)) = H^1(L(-E)) = 0\) (by the Leray spectral sequence), and so \(H^0(L) \simeq H^0(L|_E)\).  Furthermore,
by \cite[Equation~(196)]{lv22},
\[(\pi_i)_*L \simeq (f_i)_*\O_E(p_1 + \cdots p_{2n+2}) \simeq \O_{\pp^1}(1)^{\oplus(n+1)}.\]
The map \(S \to \pp H^0(L) \simeq \pp H^0(L|_E)  \simeq \pp^{2n+1}\) given by \(|L|\) thus factors through the balanced scrolls 
\[\Sigma_i = \pp [(\pi_i)_*L]  \simeq \pp^1 \times \pp^n\]
embedded by the relative \(\O(1)\), and is hence an embedding. 
Let \(R\) denote the diagonal of \(\pp^1\times \pp^1\), viewed as a divisor on \(S\).
By construction, \(R\) meets \(E\) at \(p_1, \dots, p_{2n+2}\).
Along \(R\), the bundle \(L|_R\) has degree \(2n\), and hence maps \(R\) into a hyperplane in \(\pp^{2n+1}\). The reducible curve \(E \cup R\) is a degeneration of a canonical curve.

Finally, we recall a construction of Zamora \cite[Lemma 1.1]{zamora} of a rank \(4\) quadric in \(\pp^{2n+1}\) containing \(E\), and show that it also contains the scrolls \(\Sigma_1\) and \(\Sigma_2\).
Let \(s_1, s_2\) be a basis for the linear system giving rise to the first map \(f_1 \colon E \to \pp^1\) and let \(t_1, t_2\) be a basis for the linear system giving rise to the second map \(f_2 \colon E \to \pp^1\).  Then the \(s_i \otimes t_j\) are sections of \(\O(1,1)|_E = L|_E\), and we may therefore view them as linear functions on the \(\pp^{2n+1}\).  Furthermore, as a section of \(L|_E^{\otimes 2}\), 
\[\det \begin{pmatrix} s_1\otimes t_1 & s_2 \otimes t_1 \\ s_1 \otimes t_2 & s_2 \otimes t_2 \end{pmatrix} = 0\]
This determinant defines a rank \(4\) quadric \(Q \subset \pp^{2n+1}\) containing \(E\).  Changing the bases \(s_1, s_2\) or \(t_1, t_2\) corresponds to a row/column operation, so this quadric is independent of the choice of basis.

To see that the quadric contains \(\Sigma_1\), we will show that it contains every fiber \(\pp^n = \operatorname{Span} (f_1^{-1}(x))\) for \(x \in \pp^1\).  Choose a basis so that the first element \(s_1\) vanishes on \(f_1^{-1}(x)\).  Thus the linear functions corresponding to \(s_1 \otimes t_1\) and \(s_1 \otimes t_2\) vanish along \(\operatorname{Span} (f_1^{-1}(x))\) in \(\pp^{2n+1}\), and hence the quadric \(Q\) contains this plane.  Varying \(x\), we see that \(Q\) contains \(\Sigma_1\).  Similarly \(Q\) contains \(\Sigma_2\).
Putting all of this together, we can summarize this situation with the following setup:

\begin{setup}\label{eq:S_sigma_Q_diagram}
Given an elliptic curve \(E \subset \pp^{2n+1}\) and two maps \(f_i \colon E \to \pp^1\), we obtain the following inclusions:
\begin{center}
\begin{tikzcd}
& & \Sigma_1 \arrow[rd] \\
E \cup R \arrow[r] & S \arrow[ru] \arrow[rd] & & Q \\
& & \Sigma_2 \arrow[ru]
\end{tikzcd}
\end{center}
Moreover, projection from \(Q_\sing\) induces maps \(\Sigma_i \to \pp^1 \times \pp^1 \subset \pp^3\), whose composition with projection onto the \(i\)th \(\pp^1\) factor is the structure map for the projective bundle. In particular, the composition of the inclusion \(S \hookrightarrow Q\) with projection from \(Q^\sing\) is identified with the blowup map \(S \to \pp^1 \times \pp^1\).
\end{setup}
\noindent
The maps in Setup \ref{eq:S_sigma_Q_diagram} give rise to a filtration of \(N_{E \cup R}\),
\begin{equation}\label{eq:HN_R}
N_{E \cup R / S} \subset N_{E \cup R / Q} \subset N_{E \cup R}.
\end{equation}

\begin{prop}[{\cite[Proposition 13.7]{lv22}}]
The restriction of \eqref{eq:HN_R} to \(R\) is the HN-filtration of \(N_{E \cup R}|_R\).
\end{prop}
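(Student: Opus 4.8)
The plan is to leverage the explicit splitting type recorded in Lemma~\ref{lem:N_restrict_R}. Writing \(g = 2n+2\), we have
\[N_{E \cup R}|_R \simeq \O(g) \oplus \O(g+1)^{\oplus(g-4)} \oplus \O(g+2),\]
whose Harder--Narasimhan filtration is the \emph{unique} filtration whose successive quotients are \(\O(g+2)\), \(\O(g+1)^{\oplus(g-4)}\), and \(\O(g)\), since these are semistable of strictly decreasing slopes \(g+2 > g+1 > g\). The filtration \eqref{eq:HN_R} restricted to \(R\) has three graded pieces, of ranks \(1\), \(g-4\), and \(1\) (using \(\dim S = 2\) and \(\dim Q = g-2\)), coming respectively from \(N_{E \cup R / S}|_R\), from \(N_{S/Q}|_R\), and from \(N_{Q/\pp^{g-1}}|_R\). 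As these ranks already match those of the graded pieces above, it suffices to pin down the top subbundle and the bottom quotient; the middle piece will then be forced, as I explain at the end.

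First I would compute the top piece using the Hartshorne--Hirschowitz formula (Lemma~\ref{lem:hh}) inside the \emph{smooth} surface \(S\):
\[N_{E \cup R / S}|_R \simeq N_{R/S}[\posmodalong E].\]
Since the \(n^2 - 1\) nodes of \(\bar{f}(E)\) avoid the diagonal, \(R\) is disjoint from the exceptional divisor \(F\), so \(N_{R/S} = \O_S(R)|_R\) has degree \(R^2 = 2\); performing the positive modifications at the \(g\) points \(p_1, \dots, p_{g}\) of \(R \cap E\) raises the degree to \(g+2\). Hence \(N_{E \cup R / S}|_R \simeq \O(g+2)\), which is the unique maximal-degree line subbundle of \(N_{E \cup R}|_R\), and therefore its maximal destabilizing subbundle.

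The crux is the bottom quotient \(N_{E \cup R}|_R / N_{E \cup R / Q}|_R\), and here the singularity of \(Q\) is the main obstacle. One is tempted to use the normal bundle sequence of \(E \cup R \subset Q \subset \pp^{g-1}\), which would give \(N_{Q/\pp^{g-1}}|_R = \O_Q(2)|_R = \O(4n)\); but \(4n > g\), so this cannot be the graded piece. The discrepancy is explained by the fact that \(Q\) is a rank-\(4\) quadric, singular along a \(\pp^{g-5}\) that \(E \cup R\) meets: the natural map \(N_{E \cup R} \to \O_Q(2)|_{E \cup R}\) drops rank along \(E \cup R \cap \operatorname{Sing} Q\), and the true graded piece is its saturated image. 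I would therefore avoid \(Q\) itself and work instead on the smooth resolving scrolls \(\Sigma_i \simeq \pp^1 \times \pp^n\): using the chain \(E \cup R \subset S \subset \Sigma_i \subset Q\), I would compute \(N_{S/Q}|_R\) quantitatively from the scroll geometry of \(\Sigma_1, \Sigma_2\) and conclude that the bottom quotient is a line bundle of degree \(g\). This is the step I expect to require the most care, since it is exactly where the geometric input from \cite[Section~13]{lv22} on the scrolls and the rank-\(4\) quadric must be used numerically rather than just structurally.

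Finally, once \(N_{E \cup R / S}|_R \simeq \O(g+2)\) is identified as the \(\O(g+2)\)-summand and the bottom quotient is shown to be a line bundle of degree \(g\), the middle piece is forced. Indeed, the surjection \(N_{E \cup R}|_R \twoheadrightarrow \O(g)\) must factor through the \(\O(g)\)-summand, because \(\operatorname{Hom}(\O(g+i), \O(g)) = 0\) for all \(i > 0\); hence \(N_{E \cup R / Q}|_R \simeq \O(g+2) \oplus \O(g+1)^{\oplus(g-4)}\) and the middle graded piece is \(\O(g+1)^{\oplus(g-4)}\). The three graded pieces are then semistable of strictly decreasing slopes \(g+2 > g+1 > g\), so \eqref{eq:HN_R} restricted to \(R\) is the Harder--Narasimhan filtration, as claimed.
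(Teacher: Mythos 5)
First, a point of comparison: the paper does not actually prove this proposition --- it is quoted from \cite[Proposition 13.7]{lv22}, and the paper's only original content here is the subsequent remark identifying the middle piece \(N_{E\cup R/\Sigma_1}+N_{E\cup R/\Sigma_2}\) of loc.\ cit.\ with \(N_{E\cup R/Q}\) via a containment-plus-rank-and-degree argument. Measured as an independent proof, your outline handles the formal parts correctly --- the uniqueness of the HN-filtration of a split bundle on \(\pp^1\), the computation \(N_{E\cup R/S}|_R\simeq N_{R/S}[\posmodalong E]\simeq\O(g+2)\) (valid since \(R\) misses the exceptional divisor, so \(N_{R/S}=\O(2)\)), and the observation that once the top subbundle and bottom quotient are pinned down the middle piece is forced by \(\operatorname{Hom}(\O(a),\O(b))=0\) for \(a>b\). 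But it has a genuine gap at exactly the step you flag: you never establish that the bottom quotient has degree \(g\) rather than the \(\deg\O_Q(2)|_R=2(g-2)\) predicted by the naive conormal sequence. That single degree computation \emph{is} the proposition; everything else is bookkeeping. A proof that ends with ``I would compute \(N_{S/Q}|_R\) quantitatively \dots and conclude'' is not a proof of the one nontrivial claim.

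Your diagnosis of the discrepancy is also only half-justified. The paper explicitly relies on \(E\) \emph{not} meeting \(Q_{\sing}\) in the general Setup~\ref{eq:S_sigma_Q_diagram} (see the first line of the proof of Lemma~\ref{lem:flat_limit_Ncirc}); the real issue is how \(R\) meets \(Q_{\sing}\), and a degree count shows you would need \(R\cap Q_{\sing}\) to have length exactly \(g-4\) for the differential of the defining equation of \(Q\) to drop the quotient from degree \(2g-4\) to degree \(g\). You assert the intersection is nonempty but compute nothing about it, and the local behavior of \(N_{E\cup R/Q}\) at points of \(Q_{\sing}\) is itself delicate. The clean way to close the gap --- and presumably why \cite{lv22} phrases the middle piece via the scrolls rather than via \(Q\) --- is the route you gesture at but do not execute: since \(R\) is a section of each \(\Sigma_i\simeq\pp^1\times\pp^n\) of bidegree \((1,2n-1)\), one computes \(\deg N_{E\cup R/\Sigma_i}|_R=(n+1)(2n-1)+(2n+2)=(n+1)(2n+1)\); the two pointing bundles meet in \(N_{E\cup R/S}|_R\) of degree \(g+2\), so their sum has degree \(2(n+1)(2n+1)-(2n+4)=g^2-2g-2\) and rank \(g-3\), which forces the middle graded piece to have slope \(g+1\) and the bottom quotient degree \(g\). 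One must also verify that this sum is saturated and coincides with \(N_{E\cup R/Q}|_R\) (the content of the paper's remark). Until those computations are carried out, the argument is a plausible plan rather than a proof.
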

\begin{rem}
In \cite[Proposition 13.7]{lv22}, the middle piece of the filtration does not have a geometric description.
Instead, it is described as ``\(N_{E \cup R / \Sigma_1} + N_{E \cup R/ \Sigma_2}\)" --- which is equal to \(N_{E \cup R / Q}\) since it is contained in it, and has the same rank and degree.
\end{rem}

\begin{prop}
Fixing a line bundle \(\O_E(1)\) of degree \(2n+2\) on an elliptic curve \(E\), the set of possible \(S, R, \Sigma_1, \Sigma_2, Q\)  in Setup \ref{eq:S_sigma_Q_diagram} varies in a rational base.
\end{prop}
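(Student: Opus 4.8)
The plan is to reduce the whole of Setup~\ref{eq:S_sigma_Q_diagram} to the datum of the pair of maps, and then to exhibit the space of such pairs as a rational variety. Everything in the setup is reconstructed from $\bar f = (f_1,f_2)$ together with the diagonal: the surface $S$ is the blow-up of $\pp^1 \times \pp^1$ at the nodes of $\bar f(E)$, the scroll $\Sigma_i$ is the union $\bigcup_{x \in \pp^1} \operatorname{Span}(f_i^{-1}(x))$ inside $\pp^{2n+1}$, and $Q$ is Zamora's determinantal quadric, which the construction already shows is independent of all choices. The diagonal is harmless to record: writing $M_i = f_i^*\O_{\pp^1}(1)$ with $M_1 \otimes M_2 \simeq \O_E(1)$, and choosing bases $s_0,s_1$ of the pencil $V_1 \subset H^0(M_1)$ and $t_0,t_1$ of $V_2 \subset H^0(M_2)$, the section $s_0 t_1 - s_1 t_0 \in H^0(M_1 \otimes M_2) = H^0(\O_E(1))$ cuts out $D$, and $R$ is the graph of the identification $[s_0:s_1] = [t_0:t_1]$. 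Thus a configuration is exactly a quadruple $(M_1, V_1, V_2, \phi)$, where $M_1 \in \Pic^{n+1} E$ (forcing $M_2 = \O_E(1) \otimes M_1^{-1}$), the $V_i$ are pencils, and $\phi \colon \pp V_1 \xrightarrow{\sim} \pp V_2$ is an isomorphism; equivalently it is a pair of degree $n+1$ maps to a single common $\pp^1$.

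For a \emph{fixed} $M_1$ this space is manifestly rational: it is $\mathrm{Gr}(2, H^0(M_1)) \times \mathrm{Gr}(2, H^0(M_2)) \times \operatorname{Isom}(\pp V_1, \pp V_2)$, and over the algebraically closed field $k$ the last factor is a trivial $\mathrm{PGL}_2$-torsor. Consequently the configuration space fibers, with rational fibers, over $\Pic^{n+1} E$. I expect the main obstacle to be precisely this base: $\Pic^{n+1} E \simeq E$ is an elliptic curve, hence not rational, and the obstruction is genuine, since $M_1$ is recovered intrinsically from $(\Sigma_1, E)$ — the ruling of $\Sigma_1$ recovers $f_1$ and hence $M_1 = f_1^*\O(1)$ — so the space of configurations with $\Sigma_1, \Sigma_2$ \emph{ordered} dominates $E$.

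The resolution is to use the symmetry interchanging the two scrolls. Swapping $(f_1,V_1) \leftrightarrow (f_2,V_2)$ leaves $S$, $R$, $Q$ and the unordered pair $\{\Sigma_1,\Sigma_2\}$ unchanged — and the labeling is irrelevant for the family of bundles $N_{E \cup R}|_E$ to which we will ultimately apply Lemma~\ref{lem:blackmagic-rationalbase} — while it acts on the base by the involution
\[
\iota \colon M_1 \longmapsto \O_E(1) \otimes M_1^{-1}.
\]
Under an identification $\Pic^{n+1} E \simeq E$ this is an involution of the form $x \mapsto c - x$, conjugate to $[-1]$ by a translation; it therefore has fixed points, and the quotient $E/\iota \simeq \pp^1$ is rational. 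Hence the relevant (unordered) configuration space fibers over $\pp^1$ with rational fibers.

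It remains to upgrade ``rational fibers over $\pp^1$'' to rationality of the total space, and this descent step is the only delicate point. Over the generic point $\eta$ of $\pp^1 = E/\iota$ the unordered pair $\{M_1, M_2\}$ is defined over $K = k(\pp^1)$, but $M_1$ individually is defined only over the quadratic extension $L/K$ cut out by the double cover $E \to \pp^1$, with $\operatorname{Gal}(L/K) = \mathbb{Z}/2$ interchanging $M_1$ and $M_2$. The generic fiber is therefore the Weil restriction $\operatorname{Res}_{L/K}$ of (an open subset of) a Grassmannian together with the identification torsor; such a Weil restriction contains $\operatorname{Res}_{L/K} \mathbb{A}^N = \mathbb{A}^{2N}_K$ as a dense open, so it is $K$-rational. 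A $K$-rational generic fiber over the rational base $\pp^1$ forces the total space to be rational over $k$, which gives the desired rational base; all the remaining content is bookkeeping of the construction in the first paragraph.
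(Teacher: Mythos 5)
Your proof is correct and follows essentially the same route as the paper: both reduce the setup to the choice of the unordered pair of degree-\((n+1)\) line bundles summing to \(\O_E(1)\) --- a fiber of \(\operatorname{Sym}^2 E \to \Pic^{2n+2}E\), which is exactly your \(E/\iota \simeq \pp^1\) --- followed by rational choices of pencils and an identification of targets. The only difference is that you spell out the descent over the generic point of that \(\pp^1\) via Weil restriction (and the harmlessness of the \(\Sigma_1 \leftrightarrow \Sigma_2\) labeling), points the paper leaves implicit.
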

\begin{proof} 
The data in \eqref{eq:S_sigma_Q_diagram} is determined by the following choices: 
\begin{enumerate}
\item A basis (up to common scaling) for \(H^0(\O_E(1))\), which determines the embedding \(E \subset \pp^{2n+2}\). The choice of a basis of a vector space depends on a rational base.
\item An unordered pair of line bundles \(f_i^* \O_{\pp^1}(1)\) that sum to \(\O_E(1)\). The space of line bundles of a fixed degree on \(E\) can be identified with \(E\). This choice corresponds to the fiber of the map \(a \colon (E \times E)/ \mathfrak{S}_2 \to E\) given by addition over \(\O_E(1)\).  The surface \((E \times E)/ \mathfrak{S}_2\) is a ruled surface over \(E\), so this choice is rational.
\item Two sections (up to common scaling) of each of these line bundles (defining 
\(f_i \colon E \to \pp^1\)). As in (1), this choice depends on a rational base.
\end{enumerate}
We conclude that the set of possible \(S, R, \Sigma_1, \Sigma_2, Q\)  in Setup \ref{eq:S_sigma_Q_diagram} varies in a rational base.
\end{proof}

\section{Semistability of the restriction to \(E\)}\label{sec:restriction_to_E}

In this section, we show that the restricted normal bundle \(N_{E\cup R}|_E\), where \(E \cup R \subset \pp^{g-1}\) is the degenerate canonical curve  introduced in Section~\ref{sec:our_degen}, is semistable.  

\begin{thm}\label{thm:restriction_ss}
If \(g \not\in \{4, 6\}\), then \(N_{E \cup R}|_E\) is semistable.
\end{thm}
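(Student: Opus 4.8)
The plan is to reduce immediately to a statement about a single explicit bundle on \(E\) and then eliminate destabilizing subbundles using the rationality machinery of \S\ref{sec-prelim}. By the Hartshorne--Hirschowitz formula (Lemma~\ref{lem:hh}), \(N_{E \cup R}|_E \simeq N_E[\posmodalong R]\), where the modification is taken at the \(g\) points of \(\Gamma = E \cap R\) towards the tangent directions of \(R\). Each of these is a positive modification towards a rank-one pointing bundle, so the degree increases by \(g\): the bundle \(N_E[\posmodalong R]\) has rank \(g-2\) and degree \(g(g+1)\), hence slope \(\mu = \frac{g(g+1)}{g-2} = g + 3 + \frac{6}{g-2}\). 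It therefore suffices to prove that this bundle is semistable. I would begin by recording that the unmodified normal bundle \(N_E\) of the elliptic normal curve is semistable, with slope \(\frac{g^2}{g-2} = g + 2 + \frac{4}{g-2}\); because \(g < \mu(N_E)\), this already forbids low-degree quotient line bundles of \(N_E\) and is the input that keeps the modification under control.

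The core of the argument is to forbid any destabilizing subbundle of \(N_E[\posmodalong R]\), and the natural tool is Lemma~\ref{lem:blackmagic-rationalbase}. To apply it I would, with \(\O_E(1) \in \Pic^g E\) fixed, let the remaining data (a basis of \(H^0(\O_E(1))\) together with the rational curve \(R\) through \(\Gamma\)) vary in a rational base \(B\), exactly as in the final Proposition of \S\ref{sec:our_degen}; this produces a family \(\sV = N_E[\posmodalong R]\) over \(B\). The plan is then to exhibit, at a special configuration, a short exact sequence
\[0 \to S \to N_E[\posmodalong R] \to Q \to 0\]
in which \(S\) and \(Q\) are stable and \(\mu(S)\) (or \(\mu(Q)\)) is an adjacent \(q\)-Farey fraction to \(\mu\) with coprime invariants, so that by Lemma~\ref{lem:FF} every destabilizing subsheaf is isomorphic to \(S\) or \(Q\). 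The most economical source of such a sequence is a pointing-bundle sequence: projecting \(E\) from a complementary subspace to one of the scrolls of Setup~\ref{eq:S_sigma_Q_diagram} gives a sub-line-bundle \(N_{E \cup R / \Sigma_i}|_E \subset N_E[\posmodalong R]\), automatically stable and of controllable degree.

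To close, I would produce two members \(b_1, b_2 \in B\) where this construction yields line subbundles \(S_1, S_2\) of equal slope but with \(c_1(S_1) \neq c_1(S_2)\): the two maps \(f_1, f_2 \colon E \to \pp^1\) of Setup~\ref{eq:S_sigma_Q_diagram} are interchangeable by symmetry (so \(\mu(S_1) = \mu(S_2)\)) yet \(f_1^*\O(1) \neq f_2^*\O(1)\) in \(\Pic E\) (so the pointing classes differ), giving exactly the hypothesis of Lemma~\ref{lem:blackmagic-rationalbase} and hence semistability of the general fiber. As a cross-check, or as an alternative route, one can argue by naturality: the maximal destabilizing subbundle \(\sF\) is canonical, so \(c_1(\sF) \colon B \dashrightarrow \Pic E\) is constant (rational source, abelian target), depends only on \(\O_E(1)\), and is natural in the sense of \S\ref{sec-prelim}; Lemma~\ref{lem:blackmagic-natural} then forces \(g \mid \deg \sF\), which one combines with the Farey/slope constraints above. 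Once \(N_{E \cup R}|_E\) is semistable, the odd-\(g\) case of Theorem~\ref{thm:main} follows from the balancedness of \(N_{E \cup R}|_R\) via Lemma~\ref{lem:naive}, while the even case feeds into the HN-filtration analysis.

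The step I expect to be the main obstacle is producing the exact sequence with \emph{stable} sub and quotient satisfying the Farey-adjacency hypothesis of Lemma~\ref{lem:FF}: this requires a sufficiently precise description of \(N_E\) and of how the \(g\) tangent directions of \(R\) sit relative to its summands in order to compute \(\deg S\) and \(\deg Q\) and verify both coprimality and the stability of \(Q\). It is exactly here that the excluded genera enter, since for \(g \in \{4,6\}\) the relevant pieces fail to be stable (indeed \(N_C\) is genuinely unstable, as in Remark~\ref{rem:counter}) and the Farey-adjacency degenerates; a secondary difficulty is treating both parities of \(g\) uniformly, as the clean scroll description of Setup~\ref{eq:S_sigma_Q_diagram} is tailored to the even case.
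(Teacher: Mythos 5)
Your reduction to \(N_E[\posmodalong R]\) and the slope computation are correct, and the ``alternative route'' you sketch at the end --- naturality of the maximal destabilizing subbundle forcing \(g \mid \deg \sF\), combined with slope constraints --- is in fact the skeleton of the paper's actual proof. But the main mechanism you propose does not work, and the step you defer as ``the main obstacle'' is precisely the entire content of the paper's argument, so there is a genuine gap.

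Concretely: there is no exact sequence \(0 \to S \to N_E[\posmodalong R] \to Q \to 0\) of the kind Lemma~\ref{lem:FF} requires for the \emph{full} bundle. Your candidate subbundles are unsuitable: \(N_{E\cup R/\Sigma_i}|_E\) has rank \(n = (g-2)/2\), not rank one; the scrolls \(\Sigma_i\) exist only for even \(g\); and (as the \(n=3\) computation in \S\ref{sec:even_g_numerics} shows) the natural geometric pieces \(N_{E/S}(\Gamma)\), \(N_{S/\Sigma_i}|_E\), \(N_Q|_E\) have slopes \(8\), \(12\), \(16\) when \(g=8\), none of which is Farey-adjacent to \(\mu=12\) with stable complement in the required sense. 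The paper never applies Lemmas~\ref{lem:FF} and~\ref{lem:blackmagic-rationalbase} to \(N_{E\cup R}|_E\) itself. Instead it proves the strictly weaker bounds of Lemma~\ref{prop:N_restrict_E_close_ss} (no subbundle of slope \(> g+3+\tfrac1k\), no quotient of slope \(< g+3+\tfrac1{k+1}\), where \(g-2=6k+\epsilon\)), and these require the full inductive machinery of Lemmas~\ref{lem:restriction_all_but}--\ref{lem:all_but_5}: a backwards induction on the number of modified points, degenerating \(R\) to \(L\cup R_1\cup R_2\) with the \(R_i\) spanning complementary linear spaces, an obstruction-theoretic verification that the auxiliary point \(x_1\) can be taken general, and the stability of modified pointing bundles (Lemma~\ref{lem:pointing_ss}) --- which is where Lemma~\ref{lem:blackmagic-rationalbase} is actually used, on small pointing bundles rather than on \(N_{E\cup R}|_E\). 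None of this appears in your proposal. Finally, even granting the slope bounds and the divisibility \(g\mid d\), one still needs the nontrivial combinatorial Lemma~\ref{lem:numerical} to rule out all pairs \((d,r)\); that lemma is also where \(g\in\{4,6\}\) is excluded (the cases \((k,\epsilon)=(0,2),(0,4)\)), rather than through a failure of stability of sub/quotient pieces as you suggest.
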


We will first show
that \(N_{E \cup R}|_E\) is ``close-enough-to-semistable'' that no naturally defined destabilizing subbundles could exist.
We have that
\[\mu(N_{E \cup R}|_E) = \frac{g(g+1)}{g-2} = g + 3 + \frac{6}{g-2}.\]
The fractional part of the slope depends on \(g\) modulo \(6\).  Write
\[g-2 = 6k + \epsilon \qwhere 0 \leq \epsilon < 6.\]

\begin{lem}\label{prop:N_restrict_E_close_ss}
The bundle \(N_{E \cup R}|_E\) has no subbundles of slope greater than \(g+3 + \frac{1}{k}\) and no quotient bundles of slope less than \(g+3 + \frac{1}{k+1}\).
\end{lem}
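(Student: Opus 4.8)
The plan is to use Lemma~\ref{lem:hh} to realize $N_{E \cup R}|_E$ as a positive modification of the normal bundle of the elliptic normal curve, and then to control its subbundles and quotients by combining the semistability of $N_E$ with the generality of the modification directions. By Lemma~\ref{lem:hh} we have $N_{E \cup R}|_E \simeq N_E[\posmodalong R]$, the positive elementary modification of $N_E$ at the $g$ points $\Gamma = E \cap R$ toward the tangent directions $v_i \in \pp(N_E|_{p_i})$ of $R$; since $R$ is general, the $v_i$ are general in their fibers. From $0 \to T_E \to T_{\pp^{g-1}}|_E \to N_E \to 0$ we read off that $N_E$ has rank $g-2$ and degree $g^2$, so $\mu(N_E) = g + 2 + \tfrac{4}{g-2}$, and the modification raises the degree by $g$ (one for each point of $\Gamma$), giving $\mu(N_{E \cup R}|_E) = g + 3 + \tfrac{6}{g-2}$.

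The first input is that $N_E$ is semistable (classical, and compatible with the interpolation results underlying Lemma~\ref{lem:N_restrict_R}). Given a subbundle $W \subseteq N_E[\posmodalong R]$ of rank $w$, set $W' \colonequals W \cap N_E$; since $N_{E \cup R}|_E / W$ is torsion-free, $W'$ is automatically a saturated rank-$w$ subbundle of $N_E$, and $W$ is recovered as the positive modification of $W'$ at precisely the set $A \subseteq \Gamma$ of points where $v_i \in W'|_{p_i}$. Writing $m = |A|$, we get $\deg W = \deg W' + m$, and semistability gives $\deg W' \leq w\,\mu(N_E)$, hence
\[ \mu(W) \leq \mu(N_E) + \frac{m}{w} = g + 2 + \frac{4}{g-2} + \frac{m}{w}. \]
Because $\tfrac{4}{g-2} < \tfrac1k$, the subbundle bound $\mu(W) \leq g + 3 + \tfrac1k$ follows once $\tfrac{m}{w} \leq 1 + \tfrac1k - \tfrac{4}{g-2}$.

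The crux is therefore the bound on $m$. This cannot come from $m \leq w$ alone: a corank-one subbundle can absorb as many as $g$ of the directions, so one must genuinely trade the number of absorbed directions against $\deg W'$. The mechanism is an incidence/deformation count. If $W$ destabilizes beyond $g + 3 + \tfrac1k$, then $W'$ is a rank-$w$ subbundle of $N_E$ whose fiber contains the general direction $v_i$ at each of the $m$ points of $A$. Each such incidence is a codimension-$(g-2-w)$ Schubert condition on the family of rank-$w$ subbundles of $N_E$, so existence of $W'$ for general $v_i$ forces this family to have dimension at least $m(g-2-w)$ near $W'$; comparing with the expected dimension $w g^2 - (g-2)\deg W'$ and using semistability to bound $\deg W'$ then pins down $m/w$. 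The quotient bound is obtained by the dual argument: every quotient of $N_E[\posmodalong R]$ is a positive modification of a quotient of $N_E$, quotients of the semistable bundle $N_E$ have slope $\geq \mu(N_E)$, and the symmetric count on the modification directions surviving in the quotient yields $\mu \geq g + 3 + \tfrac{1}{k+1}$.

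The main obstacle is exactly the last step, because the expected dimension of the family of subbundles of $N_E$ is only a lower bound. Its failure — the excess, measured on the genus-$1$ curve $E$ by $\operatorname{Hom}(N_E/W', W')$ via Serre duality — is precisely what separates the true slope $g + 3 + \tfrac{6}{g-2}$ from the weaker bound $g + 3 + \tfrac1k$ proved here: in the expected-dimension regime the same computation would give $\mu(W) \leq \mu(N_E[\posmodalong R])$ outright, i.e.\ full semistability. Controlling this excess — equivalently, showing that a subbundle sitting too far above the slope would force an impossibly large family of subbundles of the semistable (indeed, for $g$ odd, stable) bundle $N_E$ — is the heart of the argument and the reason only the two-sided bound is established at this stage, with full semistability deferred to the Farey- and Franchetta-based arguments of Section~\ref{sec:restriction_to_E}.
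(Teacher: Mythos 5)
There is a genuine gap, and you in effect acknowledge it yourself: the argument never actually establishes the bound on \(m/w\) that everything hinges on. Your reduction is fine as far as it goes — writing \(N_{E\cup R}|_E \simeq N_E[\posmodalong R]\), setting \(W' = W \cap N_E\), and getting \(\mu(W) \leq \mu(N_E) + \tfrac{m}{w}\) where \(m\) is the number of points of \(\Gamma\) at which the direction of \(R\) lies in \(W'\) — and you correctly observe that \(m \leq w\) is not enough. But the proposed mechanism for trading \(m\) against \(\deg W'\) does not work as stated. First, the "expected dimension" of the space of rank-\(w\) subbundles of \(N_E\) is only a lower bound for its actual dimension, so exhibiting a subbundle satisfying many incidence conditions is not a contradiction; the excess is exactly the quantity you would need to control, and your final paragraph concedes this is "the heart of the argument" and defers it. Second, the incidence count presupposes that the \(g\) directions \(v_i\) can be perturbed independently and generally in \(\pp(N_E|_{p_i})\). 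They cannot: each \(v_i\) is constrained to the hyperplane direction \(T_{p_i}H \subset N_E|_{p_i}\) (since \(R \subset H\)), and \(R\) — a rational normal curve through the \(g\) points of a hyperplane section of \(E\) — moves in only a \((g-3)\)-dimensional family, far too few moduli for \(g\) independent codimension-\((g-2-w)\) Schubert conditions. So the proposal is a strategy sketch with its central step missing, not a proof, and the dual argument for the quotient bound inherits the same gap.

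For comparison, the paper sidesteps any count of "absorbed directions" and any appeal to semistability of \(N_E\) (which it only ever uses for \(g=5\)). It proves the stronger Lemma~\ref{lem:restriction_all_but} by backwards induction on the number of \emph{unmodified} points of \(\Gamma\): one degenerates \(R\) to \(L \cup R_1 \cup R_2\) with \(R_1, R_2\) spanning complementary linear spaces \(\Lambda_1, \Lambda_2\) whose dimensions are tuned to \(k\), and uses the pointing bundle exact sequence towards \(\Lambda_1\). The subbundle is a modified pointing bundle that Lemma~\ref{lem:pointing_ss} shows is \emph{stable of slope exactly} \(g+3+\tfrac1k\) (resp.\ \(g+3+\tfrac1{k+1}\) for the quotient bound), and the quotient term is another instance of the induction; the base case (Lemma~\ref{lem:all_but_5}) is a separate induction on \(g\) via \(2\)-secant lines and the rational-base Chern class argument of Lemma~\ref{lem:blackmagic-rationalbase}. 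If you want to salvage your approach, you would need to replace the dimension count with an argument of this exact-sequence type, because the fractions \(\tfrac1k\) and \(\tfrac1{k+1}\) in the statement are realized by actual destabilizing-in-the-limit subbundles of these degenerations, not by a generic-position estimate.
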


\noindent We will deduce this from taking \(m = 0\) in
the following more general statement.

\begin{lem}\label{lem:restriction_all_but}
Suppose that \(E \subseteq \pp^{g-1}\) is an elliptic normal curve and \(R\) is a general \(g\)-secant rational curve of degree \(g-2\).  Let \(0 \leq m \leq 5\).  Write \(g-2 = (6-m)k + \epsilon\) for \(0 \leq \epsilon  < 6-m\).
Then 
\[N_E[p_1 + \cdots + p_{g - m}\posmodalong R] \]
has no subbundles of slope greater than \(g+3 + \frac{1}{k}\), and no quotient bundles of slope less than \(g+3 + \frac{1}{k+1}\).
\end{lem}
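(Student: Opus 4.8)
The plan is to prove both bounds simultaneously by induction on $g$, reducing the instance for an elliptic normal curve $E \subset \pp^{g-1}$ to the instance of parameter $g-1$ by projecting from one of the $g$ points of $E \cap R$. First I would record the numerics. Since $\deg N_E = g^2$ and the pointing bundle towards the tangent direction of $R$ at each $p_i$ has rank one, every modification $[p_i \posmodalong R]$ raises the degree by exactly $1$; hence $V \colonequals N_E[p_1 + \cdots + p_{g-m} \posmodalong R]$ has rank $g-2$ and degree $g^2 + (g-m)$, so
\[\mu(V) = g + 3 + \frac{6-m}{g-2} = g + 3 + \frac{6-m}{(6-m)k + \epsilon}.\]
Because $0 \le \epsilon < 6-m$, this slope lies in the interval $\bigl(g+3+\tfrac{1}{k+1},\, g+3+\tfrac1k\bigr]$, with the upper endpoint attained exactly when $\epsilon = 0$. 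Thus the lemma is a ``close to semistable'' statement, sharpest (indeed equivalent to semistability of $V$ on the sub-side) precisely when $\epsilon = 0$.

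For the inductive step, fix one intersection point $p_i \in E \cap R$ and let $\pi$ be the projection from it. The image $E' = \pi(E) \subset \pp^{g-2}$ is an elliptic normal curve of degree $g-1$, the image $R' = \pi(R)$ is a $(g-1)$-secant rational curve of degree $g-3$, and for general $(E,R)$ the map $E \to E'$ is an isomorphism and the pair $(E',R')$ is again general, so the statement of parameter $g-1$ applies to $N_{E'}[\posmodalong R']$. The pointing-bundle exact sequence, which one checks is compatible with the modifications away from $p_i$, then takes the form
\[0 \to L \to V \to \pi^*\bigl(N_{E'}[\textstyle\sum_{j} p_j' \posmodalong R']\bigr)(p_i) \to 0,\]
where $L$ is a line bundle with $\mu(L) \le g+3$ (its degree is $g+2$, shifted by at most a bounded correction coming from the modification at $p_i$), so $L$ cannot destabilize. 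The decisive feature is the twist by $\O(p_i)$: it raises slopes by $1$, which is exactly the gap between the integer part $g+2$ of the parameter-$(g-1)$ bound and the integer part $g+3$ of the parameter-$g$ bound.

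Given a subbundle $F \subseteq V$, the filtration $0 \to F \cap L \to F \to \Im\bigl(F \to \pi^*(\cdots)(p_i)\bigr) \to 0$ expresses $\mu(F)$ as a mediant of $\mu(F\cap L) \le \mu(L)$ and $\mu(\Im)$; the latter is a subsheaf of a twist, hence bounded by the inductive subbundle bound $(g-1)+3+\tfrac{1}{k'}$ raised by $1$. Therefore $\mu(F) \le g+3+\tfrac{1}{k'}$, and the whole argument comes down to arranging $k'=k$. This is where the parameter $m$ earns its keep: projecting from a \emph{modified} point preserves $m$ and sends $\epsilon \mapsto \epsilon-1$ (keeping $k'=k$ as long as $\epsilon \ge 1$), while projecting from an \emph{unmodified} point, available when $m \ge 1$, sends $m \mapsto m-1$. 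One runs the induction so as to land on a parameter-$(g-1)$ instance with $k'=k$; the quotient bound propagates symmetrically by testing a quotient of $V$ against the surjection onto the twisted pullback.

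I expect the genuine obstacle to be the boundary case $\epsilon = 0$. There $\mu(V)$ equals the subbundle bound $g+3+\tfrac1k$ outright, so the lemma demands honest semistability on the sub-side, and neither choice of projection point preserves $k$ (both force $k'=k-1$, yielding only the weaker bound $g+3+\tfrac{1}{k-1}$). In this case the mediant estimate alone cannot close the argument, and I would instead combine the strict quotient bound with the variation of $(E,R)$ in a rational family, feeding this into Lemma~\ref{lem:blackmagic-rationalbase}, or else absorb these residues---which for the application $m=0$ are exactly $g \equiv 2 \pmod 6$---into the separate even-genus analysis. The induction bottoms out at small $g$, equivalently $k=0$, where the subbundle bound is vacuous and only the quotient bound, reading $g+3+\tfrac{1}{k+1}=g+4$, must be checked directly on the explicitly known low-degree normal bundles.
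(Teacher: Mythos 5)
Your slope computation is right, and you correctly identify that the whole lemma reduces to arranging \(k' = k\) in the inductive step; but the single-point projection you propose provably cannot arrange this, and the failure is not confined to \(\epsilon = 0\). Projecting from an \emph{unmodified} point replaces the modulus \(6-m\) by \(7-m\), so \(g-2=(6-m)k+\epsilon\) becomes \(g-3=(7-m)k'+\epsilon'\) with \(k'\approx\frac{6-m}{7-m}k\), which is strictly smaller than \(k\) as soon as \(k\) is moderately large (e.g.\ \(m=1\), \(g-2=51\) gives \(k=10\), while \(g-3=50=6\cdot 8+2\) gives \(k'=8\)); so the \(m\mapsto m-1\) move degrades the bound to \(g+3+\frac{1}{k'}>g+3+\frac1k\) rather than helping. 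Projecting from a \emph{modified} point keeps the modulus but only decrements \(\epsilon\), so iterating it inevitably lands on \(\epsilon=0\) --- exactly where the lemma is sharpest (for \(m=0\) it is outright semistability) and where you concede the method stalls; the fallback of invoking Lemma~\ref{lem:blackmagic-rationalbase} would require a sub and quotient that are \emph{stable} of slopes Farey-adjacent to \(\mu(V)\), which a rank-one sub of slope \(g+2\) or \(g+3\) does not supply once \(k\ge 2\). There is also a local issue you elide: the modification at the projection point \(p_i\) is towards the tangent direction of \(R\), which for general \(R\) does not lie in the fiber of \(N_{E\to p_i}\) at \(p_i\); it is therefore not absorbed into your line bundle \(L\) but survives as a stray degree-one modification of the quotient at a point of \(E'\) not on \(R'\), so the quotient is not an instance of the lemma and the inductive bound does not apply to it.

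The paper's proof supplies precisely the missing mechanism: after degenerating \(R\) to \(L\cup R_1\cup R_2\) with \(R_1\) a rational curve inside the \((k-1)\)-plane \(\Lambda_1=\langle p_1,\dots,p_k\rangle\), one projects from all of \(\Lambda_1\) at once. This drops \(g-2\) by exactly \(k\) while increasing \(m\) by one, i.e.\ \((6-m)k+\epsilon\mapsto(5-m)k+\epsilon=(6-(m+1))k+\epsilon\), so \(k\) and \(\epsilon\) are genuinely preserved; the induction runs backwards on \(m\) with base case \(m=5\) (Lemma~\ref{lem:all_but_5}, itself an induction on \(g\) down to Ein--Lazarsfeld at \(g=5\) --- not the ``\(k=0\) is vacuous'' base case you describe). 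The sub in the resulting pointing-bundle sequence is a rank-\(k\) modified pointing bundle proved stable of slope \emph{exactly} \(g+3+\frac1k\) by Lemma~\ref{lem:pointing_ss}, the quotient bound is obtained symmetrically from the \(k\)-plane \(\langle p_1,\dots,p_{k+1}\rangle\), and a separate \(H^1\)-vanishing argument (needing \(m\le 4\)) guarantees the generality hypothesis of Lemma~\ref{lem:pointing_ss}. As written, your argument does not reach this and does not close.
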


\noindent In the course of proving Lemma \ref{lem:restriction_all_but}, we will need the following result.

\begin{lem}\label{lem:pointing_ss}
Let \(n \geq 2\) be an integer and 
suppose that \(\Lambda \subset \pp^{g-1}\) is a quasi-transverse \(n\)-secant \((n-1)\)-plane to \(E\).  Let \(q \in \Lambda\) be a general point.  Suppose that \(R \subset \Lambda\) is a general rational curve of degree \(n-1\) through \(E \cap \Lambda\) and \(q\).
Let \(y\) be a general point on \(E\).  Then the modified pointing bundle
\[N_{E \to \Lambda}[\posmodalong R][y \posmod q]\] 
is stable of slope \(g+3 + \frac{1}{n}\).
\end{lem}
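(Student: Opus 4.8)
The plan is to induct on $n$, combining the Farey-sequence criterion of Lemma~\ref{lem:FF} with the rational-family criterion of Lemma~\ref{lem:blackmagic-rationalbase} to upgrade semistability to the asserted stability. First I would pin down the numerics. Writing $\Lambda \cong \pp^{n-1}$ for the $n$-secant plane, the pointing-bundle exact sequence $0 \to N_{E \to \Lambda} \to N_E \to \pi^* N_{\pi(E)}(E \cap \Lambda) \to 0$, together with $\deg N_E = g^2$, shows that $N_{E \to \Lambda}$ has rank $n$ and slope $g+2$. Each of the $n$ tangential modifications in $[\posmodalong R]$ raises the degree by $1$, as does the single modification $[y \posmod q]$; hence
\[V \colonequals N_{E \to \Lambda}[\posmodalong R][y \posmod q]\]
has rank $n$ and degree $n(g+3)+1$, so that $\mu(V) = g+3 + \frac{1}{n}$. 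Since $\gcd(n(g+3)+1,\, n) = 1$, this slope is in lowest terms with denominator $n$, and in particular semistability of $V$ is equivalent to stability.

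For the inductive step, fix the $(n-2)$-plane $\Lambda' = \overline{x_1 \cdots x_{n-1}} \subset \Lambda$ spanned by all but one of the secant points $E \cap \Lambda = \{x_1, \dots, x_n\}$, together with a general degree $(n-2)$ rational curve $R' \subset \Lambda'$ through $E \cap \Lambda'$ (for instance the image of $R$ under projection from $x_n$). Tracking the modifications through the pointing filtration $N_{E \to \Lambda'} \subset N_{E \to \Lambda}$ should produce a short exact sequence
\[0 \to S \to V \to V' \to 0,\]
where $V' = N_{E \to \Lambda'}[\posmodalong R'][y \posmod q]$ is precisely the bundle of the $n-1$ case (stable of slope $g+3+\frac{1}{n-1}$ by the inductive hypothesis, or simply a line bundle when $n=2$), and $S$ is a line bundle of degree $g+3$. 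The slopes then satisfy $\mu(S) = g+3 < \mu(V) < \mu(V') = g+3+\frac{1}{n-1}$, and a determinant computation shows that $\mu(V')$ is the adjacent $n$-Farey fraction to $\mu(V)$ with $\gcd(\deg V', \rk V') = 1$, so the hypotheses of Lemma~\ref{lem:FF} are met. By that lemma every destabilizing subsheaf of $V$ is isomorphic to $S$ or to $V'$. The line bundle $S$ cannot destabilize, as $\mu(S) < \mu(V)$; and any nonzero map $V' \to V$ either splits the sequence—since $\operatorname{End}(V') = k$ and there is no nonzero map $V' \to S$ by the slope inequality—or vanishes.

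It therefore remains only to rule out a splitting, and here I would invoke Lemma~\ref{lem:blackmagic-rationalbase} rather than an explicit $\operatorname{Ext}$ computation. By the constructions of Section~\ref{sec:our_degen}, the data $(\Lambda, R, q, y)$ vary in a rational base, and the sequence above realizes the hypotheses of that lemma at the relevant members. Exhibiting two specializations $b_1, b_2$ for which the line-bundle kernels have $c_1(S_1) \neq c_1(S_2)$—achievable by moving the general points $q$ or $y$—then forces the general $V$ to be semistable, and hence stable by the coprimality observed above. The base case $n = 2$ is handled identically, with $V'$ a line bundle that is automatically stable.

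The main obstacle is the modification bookkeeping in the inductive step: one must verify that, after passing the $n$ tangential modifications of $[\posmodalong R]$ and the modification $[y \posmod q]$ through the pointing filtration $N_{E \to \Lambda'} \subset N_{E \to \Lambda}$, the quotient is \emph{exactly} the general $(n-1)$-case bundle $V'$—with $R'$, $q$, and $y$ still general enough to apply the inductive hypothesis—and the kernel is a line bundle of the predicted degree $g+3$. Producing the two specializations with distinct $c_1$ is the second technical point; the Farey-adjacency and slope inequalities are then routine.
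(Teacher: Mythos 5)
Your overall strategy --- induction on \(n\), the rank/degree computation giving \(\mu(V) = g+3+\tfrac1n\) in lowest terms, and the combination of Lemma~\ref{lem:FF} with Lemma~\ref{lem:blackmagic-rationalbase} --- is exactly the paper's, and your Farey-adjacency checks are correct. The genuine gap is in the inductive step: the exact sequence you propose does not arise from the filtration you name. Since \(\Lambda' \subset \Lambda\), the pointing bundle \(N_{E \to \Lambda'}\) is a rank-\((n-1)\) \emph{sub}bundle of \(N_{E \to \Lambda}\), so anything built from it by modifications is a subsheaf of \(V\), not a quotient. Were the bookkeeping to place the \((n-1)\)-case bundle \(V'\) (degree \((n-1)(g+3)+1\), slope \(g+3+\tfrac{1}{n-1} > \mu(V)\)) on the sub side, it would destabilize \(V\) and disprove the lemma; in fact it does not, because the modifications \([\posmodalong R]\) point along tangent directions of \(R\), which do not lie in \(\Lambda'\), so the subsheaf \(N_{E\to\Lambda'}\) absorbs essentially none of them and has slope near \(g+2\) --- too far below \(\mu(V)\) for the Farey lemma, and its line-bundle quotient is not Farey-adjacent either. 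More structurally, the only Farey-adjacent decomposition compatible with stability is a line subbundle of degree \(g+3\) with a rank-\((n-1)\) quotient of slope \(g+3+\tfrac{1}{n-1}\), and the only natural rank-\((n-1)\) quotient of \(N_{E\to\Lambda}\) comes from quotienting by a pointing line bundle \(N_{E \to p}\) for \(p \in E \cap R\), i.e., from \emph{projecting from \(p\)}. That quotient is (a twist of) the \((n-1)\)-case for the projected curve \(\bar E \subset \pp^{g-2}\), not for the same \(E \subset \pp^{g-1}\): the induction must decrease \(g\) together with \(n\).

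This is what the paper does, and it contains two further ingredients your sketch is missing. First, it \emph{specializes} \(q\) to a secant point \(p\), which is what makes the modification \([y \posmod q]\) land inside the line subbundle \(N_{E \to p}\), turning it into \(\O_E(1)(2p+y)\) of degree exactly \(g+3\); without this specialization there is no identifiable line subbundle of the right degree. Second, the two specializations with distinct \(c_1\) required by Lemma~\ref{lem:blackmagic-rationalbase} come from the \(n \geq 2\) choices of which secant point \(p\) to specialize \(q\) to, since \(c_1(\O_E(1)(2p+y))\) depends on \(p\). Your proposal never identifies the line bundle \(S\), so ``moving \(q\) or \(y\)'' gives no concrete mechanism for producing the two required specializations with \(c_1(S_1) \neq c_1(S_2)\).
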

\begin{proof}
We will prove this by induction on \(n\).  Specialize \(q\) to one of the points \(p\) where \(R\) meets \(E\).  If \(n > 2\), then the pointing bundle exact sequence \eqref{eq:pointing2} towards \(p\) induces the sequence
\[0 \to N_{E \to p}(y) \to N_{E \to \Lambda}[\posmodalong R][y \posmod p] \to N_{\bar{E} \to \bar{\Lambda}}[\posmodalong \bar{R}](p) \to 0, \]
as in \eqref{eq:exact_flat} and \eqref{eq:exact_order}.
The subbundle \(N_{E \to p}(y)\) is isomorphic to \(\O_E(1)(2p + y)\), which is stable of slope \(g+3\).  The quotient is a twist of another instance of our problem in \(\pp^{g-2}\).  We may therefore assume by induction that it is stable of slope \(g+3 + \frac{1}{n-1}\).  
Since \(c_1(\O_E(1)(2p + y))\) depends on the choice of the point \(p\), we conclude by Lemma  \ref{lem:blackmagic-rationalbase} that the general fiber is semistable (and hence stable) as desired.

It suffices, therefore, to treat the base case of \(n=2\).  In this case, \(R=\Lambda\) is a \(2\)-secant line \(\bar{pp'}\), and after specializing as above, the pointing bundle exact sequence towards \(p\) is
 \[0 \to N_{E \to p}(y + p') \to N_{E \to \Lambda}[p \posmod p'][p' \posmod p][y \posmod p] \to N_{\bar{E} \to p'}(2p) \to 0. \]
In this case the subbundle and quotient bundles are stable line bundles of slopes \(g+4\) and \(g+3\) respectively.  Again, applying Lemma  \ref{lem:blackmagic-rationalbase}, we conclude that the general fiber is semistable (and hence stable) as desired.
\end{proof}

\begin{proof}[Proof of Lemma \ref{lem:restriction_all_but}]
Our argument will be by backwards induction on \(m\).  The base case of \(m=5\) is Lemma \ref{lem:all_but_5} below, so we suppose \(m \leq 4\).

We first prove the upper bound on the slope of a subbundle by exhibiting a degeneration that lies in an exact sequence with a subbundle that is stable of slope exactly \(g+3 + \frac{1}{k}\) and quotient which satisfies our inductive hypothesis.
Let \(\Lambda_1 \simeq \pp^{k-1} \subset \pp^{g-1}\) be the span of the first \(k\) points \(p_1, \dots, p_k\) of \(E \cap R\).  Let \(\Lambda_2 \simeq \pp^{g-k-2}\) be the span of the last \(g-1-k\) points \(p_{k+2}, \dots, p_{g}\).  Since the remaining point \(p_{k+1}\) is constrained to lie in the hyperplane spanned by the other points, there is a unique line \(L\) through \(p_{k+1}\) that meets both \(\Lambda_1\) and \(\Lambda_2\).  

Let \(x_1\) and \(x_2\) denote the points where \(L\) meets \(\Lambda_1\) and \(\Lambda_2\), respectively.
Let \(R_1\) be a general rational curve in \(\Lambda_1\) of degree \(k-1\) through \(p_1, \dots, p_k, x_1\), and let \(R_2\) be a general rational curve in \(\Lambda_2\) of degree \(g-k-2\) through \(p_{k+2}, \dots, p_{g}, x_2\).   Then 
\[R^\circ \colonequals L \cup R_1 \cup R_2\] 
is a degeneration of \(R\).  It suffices to prove that \(N_E[p_1 + \cdots + p_{g - m}\posmodalong R^\circ]\) has no subbundles of slope greater than \(g+3 + \frac{1}{k}\) to prove the lemma.
Consider the pointing bundle exact sequence for pointing towards the subspace \(\Lambda_1\):
\[0 \to N_{E \to \Lambda_1}[\posmodalong R_1][p_{k+1} \posmod x_1] \to N_E[p_1 + \cdots  + p_{g - m}\posmodalong R^\circ] \to N_{\bar{E}}[p_{k+2} + \dots + p_{g - m}\posmodalong \bar{R_2}](p_1 + \cdots + p_k) \to 0.\]

In order to use Lemma \ref{lem:pointing_ss} to show that
\(N_{E \to \Lambda_1}[\posmodalong R_1][p_{k+1} \posmod x_1]\) is stable of slope \(g+3 + \frac{1}{k}\), we need that, as the points \(p_{k+1}, \dots, p_g\) vary, the point \(x_1\) is general in \(\Lambda_1\).  That is, there are no obstructions to lifting a deformation of the point \(x_1\) to a deformation of the plane \(\Lambda_2' \colonequals \bar{\Lambda_2, x_1}\) (maintaining the necessary incidences with \(E\)).   
These obstructions live in \(H^1(\Lambda_2', N)\), where the bundle \(N\) is the kernel of the map
\[N_{\Lambda_2'} \to \left. N_{\Lambda_2'} \right|_{x_1} \oplus \bigoplus_{i=k+1}^g \left. N_{\bar{\Lambda_2', T_{p_i}E}}\right|_{p_i}. \]

The key numerical input is \(2k \leq g\), which follows from \(m \leq 4\).
Since \(\Lambda_2'\) is the complete intersection of the \(k\) hyperplanes spanned by \(\Lambda_2'\) and all but one of the tangent lines \(T_{p_i}E\) for \(k+1\leq i \leq 2k \leq g\),
the bundle \(N\) sits in the exact sequence
\[0 \to \bigoplus_{i=k+1}^{2k} N_{\Lambda_2'/\bar{\Lambda_2', T_{p_i}E}} \otimes \sI_{x_1 \cup p_{k+1} \cup \dots \cup \hat{p}_i \cup \dots \cup p_g} \to N \to P\to 0,\]
where \(P\) is a punctual sheaf (and hence \(h^1(P) = 0\)).  Moreover, the evaluation map \(\ev\) in
\[0 \to N_{\Lambda_2'/\bar{\Lambda_2', T_{p_i}E}} \otimes \sI_{x_1 \cup p_{k+1} \cup \dots \cup \hat{p}_i \cup \dots \cup p_g} \to \O_{\Lambda_2'}(1) \xrightarrow{\ev} \O_{\Lambda_2'}(1)|_{x_1 \cup p_{k+1} \cup \dots \cup \hat{p}_i \cup \dots \cup p_g} \to 0\]
is surjective on global sections, since the points \(x_1 \cup p_{k+1} \cup \dots \cup \hat{p}_i \cup \dots \cup p_g\) form a basis for the plane \(\Lambda_2'\),
and \(h^1(\O_{\Lambda_2'}(1)) = 0\). Therefore
\[H^1\left(N_{\Lambda_2'/\bar{\Lambda_2', T_{p_i}E}} \otimes \sI_{x_1 \cup p_{k+1} \cup \dots \cup \hat{p_i} \cup \dots \cup p_g}\right) = 0,\]
and hence \(H^1(N) = 0\).

In the quotient, \(N_{\bar{E}}[p_{k+2} + \dots + p_{g - m}\posmodalong \bar{R_2}]\) is another case of our inductive hypothesis with one fewer modification occurring at the points of incidence of \(\bar{R_2}\) with \(\bar{E}\) (with a larger value of $k$ if $\epsilon = 5-m$).
The result now follows from our inductive hypothesis.

Now we turn to the lower bound on the slope of any quotient.  We will exhibit a specialization that lies in an exact sequence with a subbundle that is stable of slope exactly \(g+3 + \frac{1}{k+1}\) and a quotient bundle 
which satisfies our inductive hypothesis.  We will modify the same argument by letting \(\Lambda_1\) be the \(k\)-dimensional span of \(p_1, \dots, p_{k+1}\) and letting \(\Lambda_2\) be a the \((g-k-3)\)-dimensional span of  \(p_{k+3}, \dots, p_{g}\).  As above, there is a unique line \(L\) through the remaining point \(p_{k+2}\) that meets both \(\Lambda_1\) (at a point \(x_1\)) and \(\Lambda_2\) (at a point \(x_2\)).  
We define \(R_1\) and \(R_2\) analogously to above.
In the pointing bundle exact sequence towards \(\Lambda_1\): \\
\scalebox{0.95}{\begin{minipage}{1.05\textwidth}
\[0 \to N_{E \to \Lambda_1}[\posmodalong R_1][p_{k+2} \posmod x_1] \to N_E[p_1 + \cdots  + p_{g - m}\posmodalong R^\circ] \to N_{\bar{E}}[p_{k+3} + \dots + p_{g - m}\posmodalong \bar{R_2}](p_1 + \cdots + p_{k+1}) \to 0,\]
\end{minipage}} \\[0.5ex]
the subbundle is stable of slope \(g+3 + \frac{1}{k+1}\) by Lemma \ref{lem:pointing_ss} (using the same argument to ensure generality of \(x_1\)), and the quotient is a twist of another case of our inductive hypothesis in \(\pp^{g-k-2}\) (with a smaller value of \(k\) if \(\epsilon = 0\)), with one fewer modification occurring along \(\bar{R_2}\).

This completes the inductive step.
All that remains is therefore to verify the base case,
which is Lemma~\ref{lem:all_but_5} below.
\end{proof}

\begin{lem}\label{lem:all_but_5}
Suppose that \(E \subset \pp^{g-1}\) is an elliptic normal curve, and \(R\) is a degree \(g - 2\) rational curve meeting \(E\) at \(p_1, \dots, p_g\) quasi-transversely.  Then
\[N_E' \colonequals N_E[ p_1 + \cdots + p_{g-5} \posmodalong R]\]
is stable of slope \(g + 3 + \frac{1}{g-2}\).
\end{lem}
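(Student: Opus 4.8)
The plan is to induct on $g$, lowering the genus by projecting from one of the five unmodified points. Throughout, fix $E \subset \pp^{g-1}$ together with its hyperplane section $\Gamma = \{p_1, \dots, p_g\} = E \cap H$, and let the $g$-secant rational curve $R$ vary in its parameter space $B$ of degree $g-2$ rational curves through $\Gamma$; this $B$ is rational. The bundles $N_E'$ then form a family $\sV$ over $B$. Since $\deg N_E = g^2$ and each positive modification raises the degree by one, we have $\deg N_E' = g^2 + (g-5) = g^2+g-5$ and $\rk N_E' = g-2$, so
\[\mu(N_E') = \frac{g^2+g-5}{g-2} = g+3 + \frac{1}{g-2},\]
with numerator and denominator coprime; thus it suffices to prove semistability, which then upgrades automatically to stability.

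For the inductive step I would produce, at a suitable special $b \in B$, a short exact sequence to which Lemma~\ref{lem:FF} applies. Degenerate $R$ to the reducible curve $R^\circ = L \cup R'$, where $L = \overline{p_1 p_g}$ is the $2$-secant line through $p_1$ and $p_g$ and $R'$ is a general degree $g-3$ rational curve through $p_2, \dots, p_{g-1}$ meeting $L$ quasi-transversely. Now point towards $p_g$. Because $T_{p_1}L = \overline{p_1 p_g}$ equals the pointing direction $N_{E \to p_g}|_{p_1}$, the modification at $p_1$ enters the pointing line subbundle, whereas the remaining modifications (whose tangent directions are general) project isomorphically into the quotient. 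The pointing-bundle exact sequence combined with the modifications thus reads
\[0 \to S \to N_E[p_1 + \cdots + p_{g-5} \posmodalong R^\circ] \to Q \to 0,\]
where $S = N_{E \to p_g}[p_1 \posmod \overline{p_1 p_g}] \cong \O_E(1)(2p_g + p_1)$ is a line bundle of degree $g+3$ (using $N_{E\to p_g} \cong \O_E(1)(2p_g)$, as in the proof of Lemma~\ref{lem:pointing_ss}), and the quotient is $Q \cong N_{\bar E}[\bar p_2 + \cdots + \bar p_{g-5} \posmodalong \bar R'] \otimes L_0$ for a degree-one line bundle $L_0$ on $\bar E = \pi_{p_g}(E) \subset \pp^{g-2}$. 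Here $\bar E$ is a general elliptic normal curve of degree $g-1$ and $\bar R' = \pi_{p_g}(R')$ is a general degree $g-3$ curve meeting it at $\bar p_1, \dots, \bar p_{g-1}$ (the line $L$ collapses to $\bar p_1$). The untwisted bundle carries exactly $g-6$ modifications, so it is the genus $g-1$ case of the lemma itself; by induction it is stable of slope $g+2+\tfrac{1}{g-3}$, and hence its twist $Q$ is stable.

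It remains to check the numerics and invoke the Farey machinery. The subbundle $S$ is a line bundle, hence stable, and $Q$ is stable by induction. Moreover $\deg S = g+3$ and $\rk S = 1$ are coprime, and
\[\det\begin{pmatrix} g+3 & g^2+g-5 \\ 1 & g-2 \end{pmatrix} = (g+3)(g-2)-(g^2+g-5) = -1,\]
so $\mu(S) = \tfrac{g+3}{1}$ is adjacent to $\mu(N_E')$ in the $(g-2)$-Farey sequence. Lemma~\ref{lem:FF} then shows that any destabilizing subsheaf of the special fiber is isomorphic to $S$ or $Q$. Finally I would apply Lemma~\ref{lem:blackmagic-rationalbase}: since $E$ and $\Gamma$ are fixed and $B$ is rational, it suffices to exhibit two points of $B$ whose sub line bundles have equal slope but distinct $c_1$. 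Taking the degenerations $R \rightsquigarrow \overline{p_1 p_g} \cup R'$ and $R \rightsquigarrow \overline{p_2 p_g} \cup R''$ yields subbundles $\O_E(1)(2p_g + p_1)$ and $\O_E(1)(2p_g + p_2)$, both of slope $g+3$ but differing in $c_1$ since $p_1 \neq p_2$. Hence the general $N_E'$ is semistable, and by coprimality stable. The small-genus base cases (e.g.\ $g=5$, where $N_E'$ is the unmodified normal bundle of an elliptic normal quintic) are verified directly.

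The main obstacle is the second paragraph: justifying that the elementary modifications distribute across the pointing-bundle exact sequence exactly as claimed---the single modification along $L$ entering the line subbundle while the others push forward to modifications of $N_{\bar E}$ along $\bar R'$---and that the resulting quotient is genuinely the \emph{general} genus $g-1$ instance, so that the inductive hypothesis applies. This requires a careful local analysis at the node of $R^\circ$, verifying that quasi-transverse secancy and generality are preserved under projection from $p_g$, and controlling the flat limit of $\sV$ over the reducible curve $R^\circ$; this is the delicate technical heart of the argument.
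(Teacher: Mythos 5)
Your argument is essentially the paper's proof: the same degeneration of \(R\) to \(\overline{p_1 p_g} \cup R'\) with \(R'\) spanning the other secant points, the same pointing-bundle exact sequence from \(p_g\) with line subbundle \(\O_E(1)(2p_g + p_1)\) of degree \(g+3\) and quotient a twist of the genus-\((g-1)\) instance, the same base case \(g=5\) via Ein--Lazarsfeld, and the same appeal to Lemmas~\ref{lem:FF} and~\ref{lem:blackmagic-rationalbase}; the distribution of modifications across the pointing sequence that you flag as delicate is exactly the standard formalism the paper uses throughout. The one small slip is that for \(g=6\) (where \(g-5=1\), so only \(p_1\) is modified) your second degeneration \(\overline{p_2 p_g} \cup R''\) produces a subbundle of slope \(g+2\) rather than \(g+3\); instead vary the projection point, e.g.\ use \(\overline{p_1 p_{g-1}} \cup R''\) to get \(\O_E(1)(2p_{g-1}+p_1)\), which is what the paper's ``\(c_1\) depends on the ordering of \(p_1,\dots,p_g\)'' accomplishes.
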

\begin{proof}
We will prove this by induction on \(g\).  The base case is \(g=5\), in which case \(N_E' = N_E\) is stable by \cite{einlazarsfeld}.  
Otherwise, when \(g \geq 6\), the bundle \(N_E'\) is modified at \(p_1\). Let \(\Lambda \simeq \pp^{g-3}\) be the span of \(p_2, \dots, p_{g-1}\).  Let \(L\) be the line through \(p_1\) and \(p_g\) that meets \(\Lambda\) at a point \(x\).  Let \(R'\) be a rational curve of degree \(g-3\) through \(p_2, \dots, p_{g-1}, x\).  Then \(R^\circ = R' \cup L\) is a degeneration of \(R\).  Consider the specialization
\[N_E[ p_1 + \cdots + p_{g-5} \posmodalong R^\circ]\]
of \(N_E'\).
Consider the pointing bundle exact sequence for pointing towards \(p_g\):
\[0 \to N_{E \to p_g}(p_1) \to N_E[ p_1 + \cdots + p_{g-5} \posmodalong R^\circ] \to N_{\bar{E}}(p_g)[ p_2 + \cdots + p_{g-5} \posmodalong \bar{R'}]\to 0.\]
The subbundle has slope \(g+3\) exactly.  Since \(\bar{R'}\) is a rational curve of degree \(g-3\) meeting \(\bar{E}\) at 
\(p_2, \dots,  p_{g-1}, x\),
the quotient bundle is a twist of an instance the same problem in \(\pp^{g-2}\).
By induction it is stable.
Moreover, \(c_1(N_{E \to p_g}(p_1))\) depends on the ordering of \(p_1, p_2, \ldots, p_g\).
Hence by Lemma \ref{lem:blackmagic-rationalbase},
the general fiber \(N_E'\) is semistable (thus stable) as desired.
\end{proof}

We complete the proof by appealing to the naturality of the maximal destabilizing subbundle, and using the following purely combinatorial lemma.

\begin{lem}\label{lem:numerical}
Let \(k \geq 0\) and \(0 \leq \epsilon < 6\) be integers with
\[(k, \epsilon) \notin \{(0,2), (0,4)\}.\] 
Then there are no integers \(r, d\) satisfying
\begin{gather}
1 \leq r < 6k+\epsilon, \text{ and} \label{cond1} \\
6k + 5 + \epsilon  + \frac{6}{6k+\epsilon} < \frac{d}{ r} \leq 6k+ 5 + \epsilon  + \frac{1}{k} , \text{ and} \label{cond2}\\
6k+ 5 + \epsilon  + \frac{1}{k+1} \leq \frac{(6k+5 + \epsilon)(6k + \epsilon) + 6 - d}{6k + \epsilon - r}, \text{ and} \label{cond3}\\
(6k + 2 + \epsilon) \mid d. \label{cond4}
\end{gather}
\end{lem}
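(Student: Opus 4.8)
The plan is to reformulate the four conditions in terms of the single auxiliary integer $s := d - ar$, where I set $a := 6k+5+\epsilon$ and $N := 6k+\epsilon$ (so that $a = N+5$ and the modulus in \eqref{cond4} is $N+2 = 6k+2+\epsilon$). First I would record the identity $aN + 6 = (N+2)(N+3)$, so the numerator in \eqref{cond3} is $(N+2)(N+3) - d$. Dividing through, condition \eqref{cond2} becomes $\frac{6}{N} < \frac{s}{r} \le \frac{1}{k}$ (the upper bound vacuous when $k=0$), which yields $s \ge 1$ together with $r < \frac{sN}{6}$; and condition \eqref{cond3} becomes $\frac{6-s}{N-r} \ge \frac{1}{k+1}$, which, using $1 \le r \le N-1$, yields $s \le 5$ together with $r \ge N - (6-s)(k+1)$. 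Finally, since $a \equiv 3 \pmod{N+2}$, condition \eqref{cond4} is equivalent to $(N+2) \mid (s+3r)$. Thus I am reduced to an integer $s$ with $1 \le s \le 5$ and an admissible $r$ with $(N+2) \mid (s+3r)$.

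The heart of the argument is a two-sided estimate on $s+3r$. The bound $r < \frac{sN}{6}$ gives the upper estimate $s + 3r < \frac{s(N+2)}{2}$, while the bound $r \ge N - (6-s)(k+1)$ gives, after a short computation, the matching lower estimate $s + 3r \ge \frac{s(N+2)}{2} - \frac{(6-s)(6-\epsilon)}{2}$. Writing $s + 3r = m(N+2)$ with $m \ge 1$ (since $s+3r > 0$), the upper estimate forces $2m < s$, and the lower estimate forces $(s-2m)(N+2) \le (6-s)(6-\epsilon)$. Since $s - 2m \ge 1$, I conclude both $s \ge 2m+1 \ge 3$ and the key inequality $N+2 \le (6-s)(6-\epsilon)$.

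From here the reduction to finitely many cases is immediate: as $3 \le s \le 5$ we have $6-s \le 3$, so $6k + 2 + \epsilon = N+2 \le 3(6-\epsilon)$, i.e. $6k + 4\epsilon \le 16$, leaving only a short list of pairs $(k,\epsilon)$. I would discard the pairs with $N \le 1$ (where \eqref{cond1} has no solution), observe that $\epsilon = 0$ with $k \ge 1$ makes \eqref{cond2} self-contradictory (its two bounds coincide), and for each of the few surviving pairs solve $s + 3r = m(N+2)$ over the admissible data $(s,m) \in \{(3,1),(4,1),(5,1),(5,2)\}$, checking the range $N - (6-s)(k+1) \le r < \frac{sN}{6}$. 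One finds the range constraints violated in every surviving case except exactly $(k,\epsilon) = (0,2)$, realized by $(s,m,r) = (5,2,1)$, and $(k,\epsilon) = (0,4)$, realized by $(s,m,r) = (3,1,1)$ --- precisely the two excluded pairs.

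The main obstacle is establishing the sharp lower bound on $s+3r$ that produces the clean inequality $N+2 \le (6-s)(6-\epsilon)$. The symmetric quantity $(6-s)(6-\epsilon)$ is exactly what both collapses the problem to a finite check and pinpoints the exclusions: genuine solutions can occur only where $N+2 \le (6-s)(6-\epsilon)$ holds with $N+2 \in \{4,6\}$, matching the unstable genera $g = 4, 6$ of Remark~\ref{rem:counter}. Once this estimate is in hand, the remaining finite verification is routine, but it must be carried out attentively to confirm that no admissible $r$ survives outside the two excluded pairs.
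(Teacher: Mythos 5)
Your reduction to the auxiliary integer \(s = d - (6k+5+\epsilon)r\) and the divisibility \((N+2) \mid (s+3r)\) is correct, and your overall strategy is essentially the paper's: both proofs bound a positive multiple of \(N+2\) from above to force finitely many \((k,\epsilon)\) (your \((s-2m)(N+2)\) is exactly the paper's integer \(X = (6k+\epsilon)d - (6k+\epsilon+2)(6k+\epsilon+3)r = Ns - 6r\)). However, there is a genuine gap in the endgame. Your key inequality \(N+2 \le (6-s)(6-\epsilon)\) is derived from \eqref{cond3} together with only the \emph{lower} bound in \eqref{cond2}; the upper bound \(d/r \le 6k+5+\epsilon+\frac{1}{k}\), equivalently \(r \ge ks\), is never used, and it is also missing from the range \(N-(6-s)(k+1) \le r < \frac{sN}{6}\) that you check at the end. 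Consequently the finite verification does not come out as you claim. For \((k,\epsilon)=(1,1)\) (i.e.\ \(N=7\)), the data \((s,m,r)=(3,1,2)\), i.e.\ \((d,r)=(27,2)\), satisfies \eqref{cond1}, \eqref{cond3}, \eqref{cond4}, the lower bound of \eqref{cond2}, and every constraint on your list (\(3+3\cdot 2 = 9 = N+2\), \(1\cdot 9 \le 3\cdot 5\), and \(1 \le 2 < 21/6\)); it is excluded only because \(27/2 > 12 + \frac{1}{1}\). The same happens for \((k,\epsilon)=(2,1)\) with \((s,m,r)=(3,1,4)\), i.e.\ \((d,r)=(75,4)\). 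So as written your argument fails to establish the lemma precisely for \((k,\epsilon)\in\{(1,1),(2,1)\}\), i.e.\ \(g=9\) and \(g=15\).

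The repair is easy but necessary: reinstate \(r \ge ks\) in the final check (for \((1,1)\) this forces \(r=3\), giving \(s+3r=12\not\equiv 0 \bmod 9\); for \((2,1)\) it forces \(r=6\), giving \(21\not\equiv 0\bmod 15\)). The paper avoids the issue altogether by choosing a different linear combination: adding \(6-\epsilon\) times the cleared upper bound of \eqref{cond2} to \(\epsilon\) times the cleared \eqref{cond3} yields the stronger estimate \(0 < X \le \epsilon(6-\epsilon)\), whence \(6k+\epsilon+2 \le \epsilon(6-\epsilon)\), i.e.\ \(6k+2 \le \epsilon(5-\epsilon) \le 6\). This kills \(k \ge 1\) in one stroke and leaves only \((0,1)\) and \((0,3)\) to inspect, whereas your weaker bound \(X \le (6-s)(6-\epsilon)\) lets the \(k\ge 1\) cases through and so makes the dropped constraint indispensable.
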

\begin{proof}
Suppose such integers \(d\) and \(r\) exist.
Clearing denominators, \eqref{cond2} and \eqref{cond3} yield:
\begin{align*}
(6k + \epsilon) d - (6k + \epsilon + 2)(6k + \epsilon + 3) r &> 0 \\
-k d + (6 k^2 + k \epsilon + 5k + 1) r &\geq 0 \\
-(k + 1) d + (6k^2 + k \epsilon + 11k + \epsilon + 6) r &\geq \epsilon - 6.
\end{align*}
Adding \(6 - \epsilon\) times the second of these inequalities to \(\epsilon\) times the third yields
\[(6k + \epsilon) d - (6k + \epsilon + 2)(6k + \epsilon + 3) r \leq \epsilon (6 - \epsilon).\]
Combined with the first, we learn that the integer
\[X \colonequals (6k + \epsilon) d - (6k + \epsilon + 2)(6k + \epsilon + 3) r\]
satisfies
\[0 < X \leq \epsilon (6 - \epsilon).\]
On the other hand, by \eqref{cond4},
\[X = (6k + \epsilon) d - (6k + \epsilon + 2)(6k + \epsilon + 3)r \equiv 0 \mod 6k + \epsilon + 2.\]

It follows that \(\epsilon (6 - \epsilon) \geq 6k + \epsilon + 2\),
or upon rearrangement, \(6k + 2 \leq \epsilon (5 - \epsilon)\).
Since \(\epsilon\) is an integer with \(0 \leq \epsilon \leq 5\), we have
\(\epsilon (5 - \epsilon) \leq 6\), and so \(6k + 2 \leq 6\),
which implies \(k = 0\).
Moreover, if \(\epsilon = 0\) or \(\epsilon = 5\), then \(\epsilon (5 - \epsilon) = 0\),
in violation of \(6k + 2 \leq \epsilon (5 - \epsilon)\).
The cases \((k, \epsilon) = (0, 2)\) and \((0, 4)\) are excluded by assumption,
so the only remaining cases are \((k, \epsilon) = (0, 1)\) and \((0, 3)\):
\begin{itemize}
\item When \((k, \epsilon) = (0, 1)\), we have \(1 \leq r < 6k+\epsilon = 1\), which is a contradiction.
\item When \((k, \epsilon) = (0, 3)\), we have \(0 < X \leq 9\) and \(X \equiv 0\) mod \(5\). Therefore \(3d - 30 r = X = 5\),
which is a contradiction by looking mod \(3\).
\end{itemize}
This completes the proof.
\end{proof}

\subsection*{Proof of Theorem \ref{thm:restriction_ss}}

Let \((d, r)\) be the degree and rank of the maximal destabilizing subbundle of \(N_{E \cup R}|_E\).  Since this naturally-defined bundle depends only on the choice of \(\O_E(1)\) plus choices varying in a rational base, its determinant gives a natural map \(\Pic^g E \to \Pic^d E\).
By Lemma \ref{lem:blackmagic-natural}, the degree \(d\) is divisible by \(g\).
By Lemma \ref{prop:N_restrict_E_close_ss}, the slope \(d/r\) is at most \(g+3 + \frac{1}{k}\), with quotient bundle having slope at least \(g+3 + \frac{1}{k+1}\).    By Lemma \ref{lem:numerical}, no such integers \(d\) and \(r\) exist, and hence no destabilizing bundles exist, when \(g \not\in \{4,6\}\). \qed

\subsection*{Proof of Theorem \ref{thm:main} in odd genus}

Let \(C\) be a general canonical curve of odd genus \(g \geq 3\).
By \cite[Lemma 4.1]{clv}, semistability of \(N_C\) follows from the semistability of \(N_{E \cup R}|_E\) and \(N_{E \cup R}|_R\).  The first of these is Theorem \ref{thm:restriction_ss}; the second is Lemma \ref{lem:N_restrict_R}. \qed

\subsection*{Proof of Theorem \ref{thm:main} in even genus using the Strong Franchetta Conjecture} The proof of Theorem \ref{thm:main} in even genus is considerably harder. Here we will give an argument using the Strong Franchetta Conjecture proved by  Harer \cite{harer} and Arbarello and Cornalba \cite{arbarellocornalba, arbarellocornalba2} in characteristic 0 and Schr\"{o}er \cite{schroer} in characteristic \(p\). In next two sections, we will give an elementary proof.

Suppose that the normal bundle of the general canonical curve is unstable. Specialize to \(E \cup R\) as in Section~\ref{sec:our_degen}. If \(g \geq 8\), then \(N_{E \cup R}|_E\) is semistable by Theorem \ref{thm:restriction_ss}, and any destabilizing subbundle of \(N_{E\cup R}|_R\) of rank \(r\) has slope at most \(\mu(N_{E\cup R}|_R) + \frac{1}{r}\) by Lemma \ref{lem:N_restrict_R}.  Consequently, if \(g \geq 8\), then the maximal destabilizing subbundle \(F\) of \(N_C\) would
satisfy
\[\mu(N_C) < \mu(F) \leq \mu(N_C) + \frac{1}{r} \qwhere r = \rk F.\]
On the other hand, by the Strong Franchetta Conjecture, \(\det F\)
is a multiple of the canonical bundle.  We conclude that the degree of \(F\) is \(s(2g-2)\) for some integer \(s\). 
Since the slope of the normal bundle of a canonical curve is \((g+1)(2g-2)/(g - 2)\), we obtain the inequality
\[\frac{(g+1)(2g-2)}{g-2} < \frac{s(2g-2)}{r} \leq \frac{(g+1)(2g-2)}{g-2} + \frac{1}{r},\]
or upon rearrangement,
\[0 < (s-r)(g-2)-3r \leq \frac{g-2}{2g-2} < 1.\]
Since \((s-r)(g-2) -3r\) is an integer, this is a contradiction.
Hence, \(N_C\) is semistable for the general canonical curve.
\qed

\section{Degeneration so that \(Q_{\text{sing}}\) meets \(E\)}\label{sec:Qsing}
In order to give an elementary proof of Theorem \ref{thm:main} in the even genus case using the explicit description of the HN-filtration given in Section \ref{sec:HN}, we will show in Section~\ref{sec:even_g_numerics} that it suffices to bound the slopes of subbundles of \(N_{E/Q}[2\Gamma   \posmod N_{E/S}]\).
To achieve such a bound, we will utilize a further degeneration in which \(E\) meets the singular locus \(Q_{\text{sing}}\) of the rank \(4\) quadric \(Q\) described in Section \ref{sec:HN} in two points \(\{x_1, x_2\}\).
The basic inductive strategy will be to degenerate in this way, and then examine
the sequence obtained by projection from the line \(\overline{x_1 x_2}\).
If we do this carefully, the quotient will be another instance
of our Setup \ref{eq:S_sigma_Q_diagram} in \(\pp^{2n - 1}\).  In this section, we construct this degeneration and prove that the projection exact sequence behaves as desired.  In the next section, we will use this to complete our inductive proof of Theorem \ref{thm:main} in the even genus case.

We will construct this degeneration from an instance
\((\bar{E}, \bar{R}, \bar{S}, \bar{Q})\) of Setup \ref{eq:S_sigma_Q_diagram} in
\(\pp^{2n - 1}\). 
The basic strategy will be to construct a degenerate instance of Setup \ref{eq:S_sigma_Q_diagram} by specializing the smooth elliptic curve of type \((n+1, n+1)\) on (the blowup of) \(\pp^1 \times \pp^1\) to the union of a smooth elliptic curve of type \((n,n)\) union a \((1,0)\) curve and a \((0,1)\) curve.
Write \(\bar{\Gamma} = \bar{E} \cap \bar{R}\).  Recall that via the given maps \(\bar{f}_1\) and \(\bar{f}_2\), \(\bar{E}\) maps to \(\pp^1 \times \pp^1\); \(\bar{R}\) corresponds to the diagonal in \(\pp^1 \times \pp^1\).  We illustrate this below.
\begin{center}
\begin{tikzpicture}
\draw (0, 0) -- (6, 0) -- (6, 6) -- (0, 6) -- (0, 0);
\draw[dashed] (0, 0) -- (6, 6);
\draw (0.5, 5) .. controls (3, 3)  and (5.5, 3) .. (5.5, 4);
\draw (5.5, 4) .. controls (5.5, 5) and (4, 5.5) .. (3, 5.5);
\draw (3, 5.5) .. controls (0, 5.5) and (0, 0.5) .. (5.5, 0.5);
\draw (4.1, 0) -- (4.1, 6);
\draw (0, 4.1) -- (6, 4.1);
\draw (1.25, 3.95) node{\(x_1\)};
\draw (4.35, 0.75) node{\(x_2\)};
\draw (4.25, 3.93) node{\(p\)};
\draw (0.50315, 4.35) node{\(\bar{L}_1\)};
\draw (3.85, 0.3125) node{\(\bar{L}_2\)};
\draw (1.063, 4.1) circle[radius=0.03];
\draw (4.1, 4.1) circle[radius=0.03];
\filldraw (1.85, 4.1) circle[radius=0.03];
\filldraw (5.495, 4.1) circle[radius=0.03];
\filldraw (4.1, 5.33) circle[radius=0.03];
\filldraw (4.1, 3.357) circle[radius=0.03];
\draw (4.1, 0.625) circle[radius=0.03];
\filldraw (1.2, 0.8) node{\(\bar{R}\)};
\filldraw (1.6, 5.4) node{\(\bar{E}\)};
\end{tikzpicture}
\end{center}

We take \(x_1, x_2 \in \bar{E}\) so that \(\bar{f}_1(x_1) = \bar{f}_2(x_2)\),
and write \(p = (\bar{f}_1(x_1), \bar{f}_2(x_2))\).
Let \(\bar{L}_1 = \bar{f}_1(x_1) \times \pp^1\) and \(\bar{L}_2 = \pp^1 \times \bar{f}_2(x_2)\) denote the
corresponding lines of the ruling (which meet at \(p\)).  
Let \(\Delta\) denote the remaining set (not including
\(\{x_1, x_2\}\)) of points where one of the \(\bar{L}_i\) meets \(\bar{E}\),
together with \(p\) and the nodes of \(\bar{E}\).
Construct the blowup \(S^\circ\) of \(\pp^1 \times \pp^1\) at \(\Delta\),
and write
\begin{align*}
R^\circ &= \text{proper transform of} \ \bar{R} \\ 
E^\circ &= \text{proper transform of} \ \bar{E} \\
L_i &= \text{proper transform of} \ \bar{L}_i
\end{align*}
For \(q \in \Delta\), write \(F_q\) for the exceptional divisor
over \(q\). Set
\(p_i = L_i \cap F_p\).

The pair \((S, E)\) consisting of a surface $S$ and divisor $E$ as  in Setup \ref{eq:S_sigma_Q_diagram} admits a degeneration to \((S^\circ, E^\circ \cup L_1 \cup L_2)\) as an abstract pair of a surface with a divisor. Under the complete linear series \(|\O_{S^\circ}(n, n)(-\sum_{q \in \Delta} F_q)|\), the lines \(L_i\) get contracted to the points \(x_i\); thus, in \(\pp^{2n+1}\), the curve \(E\) limits to \(E^\circ\) embedded in \(\pp^{2n+1}\) as an elliptic normal curve. In the limit, the linear series corresponding to the maps \(f_i\) acquire basepoints at \(x_i\) on \(E^\circ\). Blowing up at \(x_1\) and \(x_2\) to extend the maps across the central fiber, the limiting maps have degree \(n\) on \(E^\circ\) and \(1\) on the corresponding exceptional lines \(L_i\). Equivalently, they are induced by projection of \(E^\circ \cup L_1 \cup L_2\) onto the two \(\pp^1\) factors.

We now show that  \(|\O_{S^\circ}(n, n)(-\sum_{q \in \Delta} F_q)|\) is basepoint free. Let $\bar{\Delta}$ denote the nodes of $\bar{E}$. By the discussion in \S \ref{sec:HN}, the linear series \(|\O_{\bar{S}}(n-1, n-1)(-\sum_{q \in \bar{\Delta}} F_q)|\)  is basepoint free. Pulling back to $S^\circ$, we conclude that \(|\O_{S^{\circ}}(n-1, n-1)(-\sum_{q \in \bar{\Delta}} F_q)|\)  is basepoint free. Multiplying by the equations of the lines $L_1$ and $L_2$, we see that any basepoints of  \(|\O_{S^\circ}(n, n)(-\sum_{q \in \Delta} F_q)|\) must lie on  the lines $L_i$. Since $\O_{S^\circ}(n, n)(-\sum_{q \in \Delta} F_q)|_{L_i}$ has degree zero, if there is a base point on $L_i$, then the linear series must identically vanish on $L_i$. An easy dimension count rules this possibility out.

Under this complete linear series
\(|\O_{S^\circ}(n, n)(-\sum_{q \in \Delta} F_q)|\),
the image of \(R^\circ\) is of degree \(2n - 1\),
and \(F_p\) is mapped to the line which
meets \(E^\circ\) at \(x_1\) and \(x_2\) (and which also meets the other
component \(R^\circ\)).
The images of \(E^\circ\), \(R^\circ \cup F_p\), and \(S^\circ\), along with the cone \(Q^\circ\) over \(\bar{Q}\) with vertex \(\overline{x_1 x_2}\),
give a degeneration of \((E, R, S, Q)\) in our Setup \ref{eq:S_sigma_Q_diagram}  as subschemes of $\pp^{2n+1}$.

Consider \((E, R, S, Q)\) limiting to \((E^\circ, R^\circ \cup F_p, S^\circ, Q^\circ)\).
The above description shows that
\(x_1\) and \(x_2\) are limits of points \(\hat{x}_1, \hat{x}_2 \in \Gamma \colonequals E \cap R\). Write \(\Gamma_- = \Gamma \setminus \{\hat{x}_1, \hat{x}_2\}\).
The limit of \(\Gamma_-\) is identified with \(\bar{\Gamma}\).
Our next task is to determine the flat limit of the bundles
\[N_{E/Q}[2\Gamma_- + \hat{x}_1 + \hat{x}_2 \posmod N_{E/S}] = N_{E/Q}[2\Gamma_-\posmod N_{E/S}][\hat{x}_1 + \hat{x}_2 \posmodalong R].\]
This is subtle precisely because \(E^\circ\) passes through
\(Q^\circ_\text{sing}\) (in particular the flat limit is not just \(N_{E^\circ/Q^\circ}[2\bar{\Gamma} \posmod N_{E^\circ / S^\circ}][x_1 + x_2 \posmodalong R^\circ]\)).
To do this, define 
\[B \colonequals \Bl_{Q_\sing} Q \qquad \text{and} \qquad B^\circ \colonequals \Bl_{Q_\sing^\circ} Q^\circ.\]
Explicitly, \(B\) is the graph of the
rational map \(Q \dashrightarrow \pp^1 \times \pp^1\) given by projection from \(Q_\sing\), and similarly for \(B^\circ\).  As in Setup \ref{eq:S_sigma_Q_diagram}, the composition of the map \(S \to Q\) with this projection is the blowup map \(S \to \pp^1 \times \pp^1\), and similarly for \(S^\circ\). The exceptional divisor of \(B\) is isomorphic to
\[[Q_\sing \simeq \pp^{2n - 3}] \times \pp^1 \times \pp^1,\]
and similarly for \(B^\circ\).

The line \(\bar{x_1x_2}\) naturally embeds in \(Q^\circ_\sing\) (coinciding with the image of \(F_p\)) and the lines \(L_i\) naturally embed in \(\pp^1 \times \pp^1\).  The flat limit of the (proper transform of) \(E\) in \(B\) specializes to the curve \(E^\circ \cup L_1 \cup L_2\) in \(B^\circ\).  In this limit, the points \(\hat{x}_1\) and \(\hat{x}_2\) limit to \(p_1 \in L_1\) and \(p_2 \in L_2\), where, as above, \(p_i = L_i \cap F_p\).  This setup is illustrated in the following picture.  The points in the limit of \(\Gamma\) (namely, \(\bar{\Gamma} \cup \{p_1, p_2\}\)) are circled.
\begin{center}
\begin{tikzpicture}
\draw[densely dotted] (0, 0) -- (6, 0) -- (6, 6) -- (0, 6) -- (0, 0);
\draw[densely dotted] (8, 1) -- (8, 7) -- (2, 7);
\draw[dotted] (8, 1) -- (2, 1) -- (2, 7);
\draw[densely dotted] (0, 6) -- (2, 7);
\draw[densely dotted] (6, 6) -- (8, 7);
\draw[dotted] (0, 0) -- (2, 1);
\draw[densely dotted] (6, 0) -- (8, 1);
\draw (1, 0.5) -- (7, 0.5);
\draw (4, 0.5) -- (4, 6.5);
\draw (3, 6) -- (5, 7);
\draw (6.5, 0.5) .. controls (8.5, -0.5) and (8.9, 3) .. (9, 4);
\draw (4, 3.5) .. controls (6, 2.5) and (9.25, 2.5) .. (9.25, 3);
\draw (9.25, 3) .. controls (9.25, 3.2) and (9, 3.25) .. (8.5, 3.25);
\draw (4.5, 6.75) .. controls (4.0, 7) and (3.5, 7.3) .. (3.5, 7.6);
\draw (3.5, 7.6) .. controls (3.5, 8.5) and (7.5, 8.5) .. (8.5, 7.5);
\draw (8.5, 7.5) .. controls (9.25, 6.75) and (9.1, 5) .. (9, 4);
\draw (-1.3, 3) node{\(Q^\circ_\sing \times \pp^1 \times \pp^1\)};
\draw (2.5, 0.7) node{\(L_1\)};
\draw (3, 6.3) node{\(L_2\)};
\filldraw (4, 3.5) circle[radius=0.02];
\filldraw (6.5, 0.5) circle[radius=0.02];
\filldraw (4.5, 6.75) circle[radius=0.02];
\filldraw (4, 0.5) circle[radius=0.02];
\filldraw (4, 6.5) circle[radius=0.02];
\filldraw (8.822, 2.748) circle[radius=0.02];
\filldraw (8.905, 3.23) circle[radius=0.02];
\draw (4, 0.5) circle[radius=0.05];
\draw (4, 6.5) circle[radius=0.05];
\draw (8.822, 2.748) circle[radius=0.05];
\draw (8.905, 3.23) circle[radius=0.05];
\draw (3.75, 4.75) node{\(F_p\)};
\draw (8.45, 2.87) node{\(R^\circ\)};
\draw (9.4, 5.5) node{\(E^\circ\)};
\draw (4.23, 6.33) node{\(p_2\)};
\draw (4.73, 6.65) node{\(x_2\)};
\draw (4.25, 0.35) node{\(p_1\)};
\draw (6.5, 0.65) node{\(x_1\)};
\draw [decorate, decoration={brace, mirror, amplitude=0.75ex}] (9.3, 2.6) -- (9.3, 3.4);
\draw (9.6, 3) node{\(\bar{\Gamma}\)};
\draw[dashed] (1, 0.5) -- (7, 0.5) -- (7, 6.5) -- (1, 6.5) -- (1, 0.5);
\draw[dashed] (3, 0) -- (5, 1) -- (5, 7) -- (3, 6) -- (3, 0);
\draw (3, -0.7) node{\(\overline{x_1 x_2} \times L_2\)};
\draw (0, 7.1) node{\(\overline{x_1 x_2} \times L_1\)};
\draw[->] (0.8, 6.9) -- (0.97, 6.56);
\draw[->] (3, -0.5) -- (3, -0.05);
\end{tikzpicture}
\end{center}

\begin{lem}\label{lem:flat_limit_Ncirc}
Let \((E, R, S, Q)\) be a general instance of Setup \ref{eq:S_sigma_Q_diagram} in \(\pp^{2n+1}\).  Then 
\[N_{E/Q}[2\Gamma_-\posmod N_{E/S}][\hat{x}_1 + \hat{x}_2 \posmodalong R] \] 
admits a specialization to
\[N^\circ \colonequals  N_{E^\circ / B^\circ}[2\bar{\Gamma}
 \posmod N_{E^\circ/S^\circ}][x_1 \posmodalong \bar{x_1 x_2} \times L_1][x_2 \posmodalong \bar{x_1x_2} \times L_2].\]
\end{lem}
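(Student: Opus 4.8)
The plan is to realize the degeneration as a one-parameter family and to compute the flat limit of the modified bundle directly, concentrating the analysis at the two points $x_1, x_2$ where $E^\circ$ meets $Q^\circ_{\sing}$. Away from these points the flat limit is unproblematic: the points of $2\Gamma_-$ specialize to $2\bar{\Gamma}$ and stay in the locus where $Q^\circ$ is smooth, so the modifications towards $N_{E/S}$ there limit to modifications towards $N_{E^\circ/S^\circ}$. The entire difficulty is therefore local near $x_1$ and $x_2$, where the naive limit $N_{E^\circ/Q^\circ}$ fails to be locally free.

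First I would identify the flat limit of $N_{E/Q}$ itself. Assembling the data of Setup~\ref{eq:S_sigma_Q_diagram} into a one-parameter family gives a family of rank-$4$ quadrics $\mathcal{Q}$ with special fiber $Q^\circ$; blowing up $\mathcal{Q}$ along the family of its singular loci produces a family $\mathcal{B}$ with special fiber $B^\circ$. For the generic member the curve $E$ is disjoint from $Q_{\sing}$ (a codimension-$4$ linear space), so the blowup is an isomorphism near $E$ and $N_{E/\mathcal{B}} = N_{E/Q}$ there; the special fiber $B^\circ$ is smooth, being a $\pp^{2n-2}$-bundle over $\pp^1 \times \pp^1$, and contains the proper transform of $E^\circ$, for which $N_{E^\circ/B^\circ}$ is a genuine vector bundle. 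Since $E^\circ$ meets the blown-up center only at the two points $x_1, x_2$, the flat limit of $E$ in $\mathcal{B}$ is this proper transform, with no exceptional components, so the relative normal bundle $N_{\mathcal{E}/\mathcal{B}}$ is a flat family identifying the flat limit of $N_{E/Q}$ with $N_{E^\circ/B^\circ}$.

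The main obstacle is to track the pointing modifications $[\hat{x}_i \posmodalong R]$ through this degeneration. The modification direction at $\hat{x}_i$ is the tangent line to $R$; as $\hat{x}_i \to x_i$, the relevant component of the limit $R^\circ \cup F_p$ is $F_p$, whose image is the line $\overline{x_1 x_2}$ lying inside the vertex, so this tangent direction limits into $Q^\circ_{\sing}$ and becomes degenerate on $Q^\circ$. Working in local coordinates on $B^\circ$ above the point of $E^\circ$ lying over $x_i$, I would show that the exceptional divisor $Q^\circ_{\sing} \times \pp^1 \times \pp^1$ resolves this direction into the surface $\overline{x_1 x_2} \times L_i$: the factor $\overline{x_1 x_2} \subset Q^\circ_{\sing}$ records the limiting tangent direction of $R$, while the direction in which $E^\circ$ approaches the vertex forces the second factor to be the ruling line $L_i$, because the condition $\bar{f}_1(x_1) = \bar{f}_2(x_2)$ places the image $\bar{f}(x_i)$ on $\bar{L}_i$. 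Reconciling the limit of the tangent line of $R$ with the exceptional directions introduced by the blowup is the crux of the argument.

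Combining the three ingredients --- the flat limit $N_{E^\circ/B^\circ}$ of $N_{E/Q}$, the specialization of the $2\Gamma_-$ modifications to $2\bar{\Gamma}$ towards $N_{E^\circ/S^\circ}$, and the limit of each $[\hat{x}_i \posmodalong R]$ to $[x_i \posmodalong \overline{x_1 x_2} \times L_i]$ --- shows that the flat limit of the bundle on the general fiber is exactly $N^\circ$, as claimed.
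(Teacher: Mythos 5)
There is a genuine gap, and it sits exactly at the point your proof declares to be unproblematic. You assert that, after blowing up \(\mathcal{Q}\) along its relative singular locus, ``the flat limit of \(E\) in \(\mathcal{B}\) is this proper transform, with no exceptional components.'' This is false, and it is precisely the subtlety the lemma exists to handle. The total space of the family of curves is a surface meeting the blowup center at the two points \(x_1, x_2\) of the central fiber, so the flat limit of \(E_t\) in \(B^\circ\) is the \emph{total} transform of \(E^\circ\), namely \(E^\circ \cup L_1 \cup L_2\), where \(L_1, L_2\) are curves in the exceptional divisor \(Q^\circ_{\sing} \times \pp^1 \times \pp^1\) (of types \((0,1,0)\) and \((0,0,1)\)). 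A degree count confirms your claim cannot stand: the flat limit of \(N_{E/B}\) must have the same degree as \(N_{E/B} \simeq N_{E/Q}\), whereas \(N_{E^\circ/B^\circ}\) has degree smaller by \(2\) (one for each point where \(E^\circ\) meets the center). The paper's proof is structured entirely around these two extra components: one computes \(N|_{L_i} \simeq \O_{L_i}^{\oplus(r-2)} \oplus \O_{L_i}(1)\), performs an elementary modification of the \emph{family} along the divisors \(L_1, L_2\) towards the unique positive subbundles \(N_{L_i/\bar{x_1x_2}\times L_i}(p_i)\) so that the restriction to each \(L_i\) becomes trivial, and only then contracts the \(L_i\) to obtain a bundle on \(E^\circ\). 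The modifications \([x_i \posmodalong \bar{x_1x_2}\times L_i]\) in \(N^\circ\) are produced by this family modification at the points \(x_i = E^\circ \cap L_i\); they are not the limits of the pointing modifications \([\hat{x}_i \posmodalong R]\).

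Your compensating third paragraph is therefore also not correct as stated. A pointing modification towards the curve \(R\) raises the degree by \(1\) at each point, while a pointing modification towards the surface \(\bar{x_1x_2}\times L_i\) raises it by \(2\), so \([\hat{x}_i \posmodalong R]\) cannot simply ``limit to'' \([x_i \posmodalong \bar{x_1x_2}\times L_i]\); the two discrepancies of \(2\) in your two claims cancel in total degree, which is why you land on the right answer, but neither step is individually true. In the actual degeneration the modification towards \(R\) at \(\hat{x}_i\) limits to a modification towards \(F_p\) at the point \(p_i = L_i \cap F_p\), which lives on the component \(L_i\) rather than on \(E^\circ\), and only reappears on \(E^\circ\) after the elementary transformation along \(L_i\) and the blow-down. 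To repair the argument you need to (i) identify the central fiber of the family of curves as \(E^\circ \cup L_1 \cup L_2\), (ii) compute the restriction of the limiting bundle to each \(L_i\) and identify its positive subbundle, and (iii) carry out the modification-and-contract step; this is the content of the paper's proof.
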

\begin{proof}
Since \(E\) does not meet \(Q_\sing\), we have
\[N_{E/Q}[2\Gamma_-\posmod N_{E/S}][\hat{x}_1 + \hat{x}_2 \posmodalong R] \simeq  N_{E/B}[2\Gamma_-\posmod N_{E/S}][\hat{x}_1 + \hat{x}_2 \posmodalong R].\]
This bundle fits into a flat family \(\N\) whose central fiber is
\[N \colonequals \N|_0 = N_{E^\circ \cup L_1 \cup L_2 / B^\circ}[2 \bar{\Gamma} \posmod N_{E^\circ/S^\circ}][p_1 + p_2\posmodalong  F_p].\]

The \(L_i\) are lines in the exceptional divisor of types
\((0, 1, 0)\) and \((0, 0, 1)\), respectively. In particular,
their normal bundles in the exceptional divisor are trivial,
and so their normal bundles in
\(B^\circ\) are \(\O_{L_i}^{\oplus(r-2)} \oplus \O_{L_i}(-1)\).
The restriction
\[N|_{L_i} \simeq N_{L_i/B^\circ}[x_i \posmodalong E^\circ][p_i \posmodalong F_p]\]
is obtained by making two positive modifications.
Since the restriction of the projection  from \(Q^\circ_\sing\) to \(E^\circ\) has degree \(2n\) (i.e., equal to the degree of \(\bar{E}\)) by construction, \(E^\circ\) must meet \(Q_\sing\) at \(x_1\) and \(x_2\), both with multiplicity \(1\). In the blowup, \(E^\circ\) is therefore transverse to the exceptional divisor at the \(x_i\),
so the positive modification at \(x_i\) is transverse to \(\O_{L_i}^{\oplus(r-2)}\).
Therefore
\[N|_{L_i} \simeq \O_{L_i}^{\oplus(r-2)} \oplus \O_{L_i}(1).\]

To identify the positive subbundle, note that there is a unique
subbundle of \(N|_{L_i}\) that is isomorphic to \(\O_{L_i}(1)\),
and that one such subbundle is \(N_{L_i / \bar{x_1 x_2} \times L_i}(p_i)\).

Consider the modification
\[\N' \colonequals \N[L_1 \posmod N_{L_1 / \bar{x_1 x_2} \times L_1}(p_1)][L_2 \posmod N_{L_2 / \bar{x_1 x_2} \times L_2}(p_2)].\]
Away from the central fiber, we have \(\N' \simeq \N\).
The central fiber \(\N'|_0\) therefore gives
another flat limit of the bundle \(N_{E/Q}[2\Gamma_-\posmod N_{E/S}][\hat{x}_1 + \hat{x}_2 \posmodalong R] \).
But by construction, \(\N'|_0\)
has trivial restriction to \(L_1\) and \(L_2\) by \eqref{eq:direct_on_D}.
Blowing down \(L_1\) and \(L_2\),
we conclude that
a flat limit of the bundles \(N_{E/Q}[2\Gamma_-\posmod N_{E/S}][\hat{x}_1 + \hat{x}_2 \posmodalong R] \) is therefore
\[N^\circ = \N'|_{E^\circ} \simeq N_{E^\circ / B^\circ}[2\bar{\Gamma} \posmod N_{E^\circ/S^\circ}][x_1 \posmodalong \bar{x_1 x_2} \times L_1][x_2 \posmodalong \bar{x_1x_2} \times L_2].\qedhere\]
\end{proof}

Our final goal is to relate this to projection from the line \(\bar{x_1 x_2}\).  By construction, this projection map sends \((E^\circ, R^\circ, S^\circ, Q^\circ)\) in \(\pp^{2n+1}\) to \((\bar{E}, \bar{R}, \bar{S}, \bar{Q})\) in \(\pp^{2n-1}\).
We accomplish this by rewriting \(N^\circ\) in terms of the normal
bundle of the proper transform of \(E^\circ\) in
\[B_-^\circ \colonequals \Bl_{\bar{x_1 x_2}} Q^\circ.\]
Because \(\bar{x_1x_2} \subset Q^\circ_\sing\), there is a natural map from the exceptional divisor of \(B^\circ_-\) to the exceptional divisor of \(B^\circ\).
Write \(M_i\) for the preimage of \(\bar{x_1 x_2} \times L_i\) in the exceptional divisor of \(B_-^\circ\).
Then by \eqref{eq:blowup_modification} in Section~\ref{sec-prelim-elementary}, we have
\[N^\circ \simeq N_{E^\circ / B_-^\circ}[2\bar{\Gamma} \posmod N_{E^\circ/S^\circ}] [x_1 \posmodalong M_1][x_2 \posmodalong M_2].\]
Explicitly, the exceptional divisor of \(B^\circ_-\)
is isomorphic to \(\bar{x_1 x_2} \times \bar{Q}\),
with \(M_i = \bar{x_1 x_2} \times \bar{M}_i\),
where the \(\bar{M}_i\) are the
 \((2n - 3)\)-planes of the rulings
of \(\bar{Q}\) corresponding to \(L_i\).

Note that \(\bar{x_1 x_2} \times p\) is contained
in \(M_1\) and \(M_2\), and is contracted to the point \(p \in \bar{Q}\)
under projection.
Moreover, \(\bar{M}_i\)
is transverse (not just quasi-transverse!) to \(\bar{E}\) at \(x_i\).
Projection from \(\bar{x_1 x_2}\) therefore induces the exact sequence
\begin{equation}\label{eq:exact_Qsing}
0 \to \O_{E^\circ}(1)(x_1 + x_2)^{\oplus 2} \to N^\circ \to N_{\bar{E} / \bar{Q}}[2 \bar{\Gamma} \posmod N_{\bar{E}/\bar{S}}](x_1 + x_2) \to 0.
\end{equation}

\section{Completing the proof in even genus}\label{sec:even_g_numerics}

Let \(g = 2n+2\) be even.  We consider the degenerate canonical curve \(E \cup R \subset \pp^{2n+1}\) introduced in Section \ref{sec:our_degen}.
In this section, we leverage the geometric description of the HN-filtration of \(N_{E\cup R}|_R\) given in Section \ref{sec:HN} and the semistability of \(N_{E \cup R}|_E\) proved in Section \ref{sec:restriction_to_E} to prove that the normal bundle of a general canonical curve of even genus is semistable.  

Let \(S\), \(\Sigma_1\), \(\Sigma_2\), and \(Q\) be as in
Setup~\ref{eq:S_sigma_Q_diagram}. We first reduce to proving a bound on the slopes of certain subbundles of \(N_{E \cup R/Q}|_E\).

\begin{cond}\label{cond:numerical}
For a general \(E \cup R \subset \pp^{2n+1}\), every subbundle \(F \subseteq N_{E \cup R/Q}|_E\) with
\[N_{E \cup R/S}|_\Gamma \subset F|_\Gamma\]
satisfies
\[\mu(F) \leq 2n+5 + \frac{3}{n} - \frac{1}{\rk F}.\]
\end{cond}

\begin{cond}\label{cond:numerical-double}
For a general \(E \cup R \subset \pp^{2n+1}\), every subbundle \(F \subseteq N_{E/Q}[2\Gamma \posmod N_{E/S}]\)
satisfies
\[\mu(F) \leq 2n+5 + \frac{3}{n} + \frac{2n + 1}{\rk F}.\]
\end{cond}

\begin{lem}
Condition~\ref{cond:numerical-double} implies Condition~\ref{cond:numerical}.
\end{lem}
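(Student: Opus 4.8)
The plan is to realize both bundles appearing in the two conditions as elementary modifications of \(N_{E/Q}\) along \(\Gamma\) towards the line subbundle \(N_{E/S}\), and then to pass from a subbundle \(F\) of the first bundle to its saturation inside the second. First I would record the two structural identifications. For general \(E \cup R\) the curve \(E\) avoids \(Q_{\sing}\), so \(Q\) is smooth along \(E \cup R\) and the Hartshorne--Hirschowitz formula (Lemma~\ref{lem:hh}), applied inside \(Q\), gives \(N_{E \cup R/Q}|_E \simeq N_{E/Q}[\posmodalong R]\). Since \(R \subset S\) meets \(E\) transversely inside the surface \(S\), the direction of \(R\) at each point of \(\Gamma\) is exactly the fiber of the line bundle \(N_{E/S}\); hence modifying towards \(R\) is the same as modifying towards \(N_{E/S}\), and
\[A \colonequals N_{E \cup R/Q}|_E \simeq N_{E/Q}[\Gamma \posmod N_{E/S}].\]
The same reasoning inside \(S\) shows \(N_{E \cup R/S}|_E \simeq N_{E/S}(\Gamma)\); set \(L \colonequals N_{E/S}(\Gamma)\), a line subbundle of \(A\). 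A local computation at a point of \(\Gamma\) (dividing the uniformizer into the \(N_{E/S}\)-direction) shows that iterated positive modifications compose, so that
\[B \colonequals N_{E/Q}[2\Gamma \posmod N_{E/S}] \simeq A[\Gamma \posmod L].\]
In particular \(A \subseteq B\), and since \(\deg B - \deg A = 2n+2\), the quotient \(B/A\) is a length-\((2n+2)\) torsion sheaf supported on \(\Gamma\) and generated along \(L\).

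Now let \(F \subseteq A\) be a subbundle satisfying the hypothesis \(N_{E \cup R/S}|_\Gamma = L|_\Gamma \subseteq F|_\Gamma\), and let \(\tilde{F}\) be the saturation of \(F\) inside \(B\) (using \(F \subseteq A \subseteq B\)). Then \(\tilde F\) is a subbundle with \(\rk \tilde{F} = \rk F\). The key step is the degree estimate
\[\deg \tilde{F} \geq \deg F + (2n+2).\]
This is precisely where the hypothesis \(L|_\Gamma \subseteq F|_\Gamma\) is used: at each \(p \in \Gamma\), a local section \(s\) of \(F\) with \(s(p)\) spanning \(L|_p\) can, after passing to \(B\), be divided by the uniformizer \(t\) at \(p\), because \(B\) is obtained from \(A\) exactly by permitting a simple pole in the direction \(L\) along \(\Gamma\). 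Thus \(t^{-1}s\) is a section of \(B\) lying in \(\tilde F\), contributing at least \(1\) to the stalk of \(\tilde F/F\) at \(p\). Summing the local contributions over the \(2n+2\) points of \(\Gamma\) gives \(\deg \tilde F - \deg F = \operatorname{length}(\tilde F/F) \geq 2n+2\).

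With this estimate the conclusion is bookkeeping. Write \(r \colonequals \rk F = \rk \tilde F\). Applying Condition~\ref{cond:numerical-double} to \(\tilde F \subseteq B\) gives \(\deg \tilde F \leq r\left(2n+5 + \frac{3}{n}\right) + (2n+1)\). Combining with \(\deg F \leq \deg \tilde F - (2n+2)\) yields \(\deg F \leq r\left(2n+5 + \frac{3}{n}\right) - 1\), that is,
\[\mu(F) \leq 2n+5 + \frac{3}{n} - \frac{1}{r},\]
which is exactly the inequality of Condition~\ref{cond:numerical}.

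I expect the main obstacle to be the careful justification of the degree estimate, which is the crux of the argument: the improvement from the \(+\frac{2n+1}{\rk F}\) of Condition~\ref{cond:numerical-double} to the \(-\frac{1}{\rk F}\) demanded by Condition~\ref{cond:numerical} is accounted for exactly by the \(2n+2\) units of degree that the saturation gains, one at each point of \(\Gamma\), and this gain is available only because the hypothesis forces \(F\) to contain the modification direction \(L\) at every point of \(\Gamma\). Making the local ``divide by \(t\)'' step rigorous — identifying \(\tilde F / F\) with a subsheaf of \(B/A\) and verifying that \(L|_\Gamma \subseteq F|_\Gamma\) forces a nonzero stalk at each point of \(\Gamma\) — is where the most care is needed; the two preliminary identifications of \(A\) and \(B\) as modifications towards \(N_{E/S}\) are the other place where one must be attentive, in particular to the compatibility of the Hartshorne--Hirschowitz description inside the smooth locus of \(Q\) with the direction computation inside \(S\).
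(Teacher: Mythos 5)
Your proposal is correct and is essentially the paper's argument: the paper simply observes that \(F[\Gamma \posmod N_{E/S}]\) is a subbundle of \(N_{E/Q}[2\Gamma \posmod N_{E/S}]\) of the same rank and degree \(\deg F + (2n+2)\), which is exactly the sheaf your saturation \(\tilde F\) contains (so your degree estimate follows immediately), and then applies Condition~\ref{cond:numerical-double} to conclude. Your local ``divide by the uniformizer in the \(N_{E/S}\)-direction'' justification is just the local description of that positive modification, so the two proofs coincide in substance.
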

\begin{proof}
If \(F\) is a subbundle of \(N_{E \cup R/Q}|_E = N_{E/Q}[\Gamma \posmod N_{E/S}]\) with \(N_{E \cup R/S}|_\Gamma \subset F|_\Gamma\), then the modification \(F[\Gamma \posmod N_{E/S}]\) is a subbundle of \(N_{E/Q}[2\Gamma \posmod N_{E/S}]\) with
\[\mu(F[\Gamma \posmod N_{E/S}]) = \mu(F) + \frac{2n+2}{\rk{F}}.\qedhere\]
\end{proof}

\begin{prop}\label{prop:bound_subs}
Suppose that Condition \ref{cond:numerical} is satisfied.
Then \(N_{E \cup R}\) is semistable.
\end{prop}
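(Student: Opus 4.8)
The plan is to verify the adjusted-slope inequality defining semistability directly on the nodal curve $E \cup R$, and then to propagate it to the general canonical curve via Proposition~\ref{prop:stab-open} together with Lemma~\ref{lem:N_restrict_R}. A destabilizing subsheaf of $N_{E \cup R}$ on the connected curve $E \cup R$ has constant rank, so pulling back along $\nu\colon E \sqcup R \to E \cup R$ we must bound the adjusted slope of subbundles of the form $F = (F_E, F_R)$ with $\rk F_E = \rk F_R = r$, where $1 \le r \le g-3$. The useful bookkeeping identity is that, since degree on the disconnected normalization is additive over components while the rank is unchanged,
\[\mu(N_{E \cup R}) = \mu(N_{E \cup R}|_E) + \mu(N_{E \cup R}|_R), \qwhere \mu(N_{E \cup R}|_R) = g+1 \quad\text{and}\quad \mu(N_{E \cup R}|_E) = \tfrac{g(g+1)}{g-2}.\]
Thus, writing $\delta_p := \codim_F(F_E|_p \cap F_R|_p) \ge 0$, the target inequality $\mu^{\text{adj}}(F) \le \mu(N_{E\cup R})$ is equivalent to $\deg F_E + \deg F_R \le \bigl(\mu(N_{E\cup R}|_E) + g+1\bigr)\,r + \sum_{p \in \Gamma}\delta_p$.

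First I would bound $\deg F_R$ using the Harder--Narasimhan filtration of $N_{E \cup R}|_R$ from Lemma~\ref{lem:N_restrict_R} and Section~\ref{sec:HN}, whose top piece is $N_{E \cup R/S}|_R \simeq \O(g+2)$ and whose bottom quotient is $\O(g)$. Since $R \simeq \pp^1$ and any subbundle of a split bundle has degree at most the sum of its $r$ largest summands, $\deg F_R \le (g+1)r + a$, where $a \in \{0,1\}$ equals $1$ exactly when $F_R$ contains the top line subbundle $N_{E \cup R/S}|_R$. The extra $+1$ is the sole source of instability coming from $R$. When $a = 0$ the bound is immediate: $N_{E \cup R}|_E$ is semistable by Theorem~\ref{thm:restriction_ss} (we are assuming $g \notin \{4,6\}$), so $\deg F_E \le \mu(N_{E\cup R}|_E)\,r$, and combined with $\deg F_R \le (g+1)r$ and $\delta_p \ge 0$ this already yields $\mu^{\text{adj}}(F) \le \mu(N_{E \cup R})$.

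The crux is the case $a = 1$, where $F_R$ picks up the extra degree from the top piece and semistability of $N_{E \cup R}|_E$ alone is too weak. Here I would exploit the gluing at the nodes. Since $N_{E \cup R/S}|_p \subseteq F_R|_p$ for every $p \in \Gamma$, at any node where $N_{E \cup R/S}|_p \not\subseteq F_E|_p$ the fibers $F_E|_p$ and $F_R|_p$ must already differ, so the number of such deficient nodes is at most $\sum_p \delta_p$. Intersecting $F_E$ with $N_{E \cup R/Q}|_E = N_{E/Q}[\Gamma \posmod N_{E/S}]$ and then positively modifying toward $N_{E \cup R/S}$ at the finitely many deficient nodes (each such modification raises the degree by exactly $1$) produces a subbundle $F_E'$ with $N_{E \cup R/S}|_\Gamma \subset F_E'|_\Gamma$, to which Condition~\ref{cond:numerical} applies, giving $\deg F_E' \le \mu(N_{E\cup R}|_E)\,r - 1$. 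The decisive numerical coincidence is that the improvement $-\tfrac{1}{\rk F}$ in Condition~\ref{cond:numerical} contributes precisely the $-1$ that cancels the $+1$ gained by $F_R$ on $R$, while the degree cost of the modifications and the defect $\sum_p \delta_p$ enter with favorable sign; assembling these estimates yields $\deg F_E + \deg F_R - \sum_p \delta_p \le \mu(N_{E\cup R}|_E)\,r + (g+1)r = \mu(N_{E\cup R})\,r$, as required. The perfectly glued case $\delta_p \equiv 0$ shows the bound is tight, consistent with $N_{E\cup R}$ being semistable but possibly not stable in even genus.

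The main obstacle I anticipate is exactly this node bookkeeping in the $a=1$ case: carefully matching the shortfall in the containment $N_{E \cup R/S}|_\Gamma \subseteq F_E|_\Gamma$, the degree cost of the positive modifications toward $N_{E \cup R/S}$, and the defect $\sum_p \delta_p$, so that all corrections combine with the correct sign. A secondary point requiring care is the subcase in which $F_E$ surjects onto the bottom quotient $N_{E \cup R}|_E / N_{E \cup R/Q}|_E$, which drops the effective rank of $F_E \cap N_{E \cup R/Q}|_E$ by one; this must be compensated using that the corresponding bottom piece $\O(g)$ on $R$ has slope strictly below $g+1$, so that surjecting onto it forces a matching loss in $\deg F_R$.
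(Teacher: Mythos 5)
Your overall architecture is exactly the paper's: pass to the normalization, bound \(\deg F_R \leq (g+1)\rk F + a\) with the excess \(a=1\) occurring precisely when \(F_R \supseteq N_{E\cup R/S}|_R\), dispose of the case \(a=0\) using Theorem~\ref{thm:restriction_ss} alone, and in the case \(a=1\) pass to the kernel of \(F_E \to N_Q|_E\) and invoke Condition~\ref{cond:numerical}, whose \(-\frac{1}{\rk F}\) cancels the \(+\frac{1}{\rk F}\) coming from \(R\). You have also correctly identified the two delicate subcases (imperfect gluing, and surjection onto \(N_Q|_E\)).

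The genuine gap is your treatment of the subcase \(a=1\) with deficient nodes present. You propose to enlarge \(F_E'' \colonequals F_E \cap N_{E\cup R/Q}|_E\) by ``positively modifying toward \(N_{E\cup R/S}\)'' at each node \(p\) where \(N_{E\cup R/S}|_p \not\subseteq F_E|_p\), and then to apply Condition~\ref{cond:numerical} to the result. This construction is not available: any rank-preserving enlargement of \(F_E''\) by torsion supported at \(p\) that remains a subsheaf of \(N_{E\cup R/Q}|_E\) is contained in the saturation of \(F_E''\), so if \(F_E''\) is saturated and \(N_{E\cup R/S}|_p \not\subseteq F_E''|_p\), no such enlargement can acquire \(N_{E\cup R/S}|_p\) in its fiber; interpreted instead as an elementary modification in the sense of \S\ref{sec-prelim}, the resulting sheaf is no longer a subsheaf of \(N_{E\cup R/Q}|_E\), so Condition~\ref{cond:numerical} does not apply to it. Fortunately the subcase is vacuous in difficulty: if any node is deficient then \(\sum_p \delta_p \geq 1\), so the adjustment term \(-\frac{1}{\rk F}\sum_p \delta_p\) already cancels the excess \(+\frac{1}{\rk F}\) from \(R\), and \(\mu^{\text{adj}}(F) \leq \mu(N_{E\cup R})\) follows from semistability of \(N_{E\cup R}|_E\) with no appeal to Condition~\ref{cond:numerical}. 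This is how the paper proceeds: the Condition is invoked only when \(F\) descends to an honest subbundle of \(N_{E \cup R}\), in which case \(F_E|_p = F_R|_p \supseteq N_{E\cup R/S}|_p\) at every node and its hypothesis holds with no modification. A secondary weak point: your compensation in the subcase where \(F_E\) surjects onto \(N_Q|_E\) (claiming a ``matching loss in \(\deg F_R\)'') is not justified, since surjectivity of \(F_E \to N_Q|_E\) at the generic point of \(E\) does not control \(F_R\); the paper instead observes that the image of \(F_E\) in \(N_Q|_E\) factors through \(\O_E(2)(-\Gamma)\), of slope \(g\), which is small enough for the weighted average with the kernel bound to close the estimate.
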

\begin{proof}
Let \(\nu \colon E \sqcup R \to E \cup R\) denote the normalization,
and \(G \subseteq \nu^* N_{E \cup R}\) be any subbundle.  By Lemma~\ref{lem:N_restrict_R} and Theorem~\ref{thm:restriction_ss},
we have
\[\mu(G|_R) \leq 2n+3 + \frac{1}{\rk G} \qand \mu(G|_E) \leq \mu(N_{E \cup R}|_E) = 2n+5 + \frac{3}{n}.\]
Combining these, we have 
\[\mu^{\text{adj}}(G) \leq \mu(G) = \mu(G|_R) + \mu(G|_E) \leq 4n + 8 + \frac{3}{n} + \frac{1}{\rk G},\]
with the stronger bound
\[\mu^{\text{adj}}(G) \leq 4n + 8 + \frac{3}{n} = \mu(N_{E \cup R})\]
unless \(G\) is actually a subbundle of \(N_{E \cup R}\) and \(N_{E \cup R/S}|_R \subset G|_R \subset N_{E\cup R/Q}|_R\). In other words, we are immediately done unless \(G \) is a subbundle of \(N_{E \cup R}\) and \(G|_R\) contains the positive factor \(\O_{\pp^1}(g+2)\) and is contained in next piece of the HN-filtration \(\O_{\pp^1}(g+2) \oplus \O_{\pp^1}(g+1)^{\oplus (g-4)}\).  We therefore assume that these hold.  The restriction \(G|_E\) is thus a subbundle of \(N_{E \cup R}|_E\)  with \(N_{E \cup R/S}|_\Gamma \subset G|_\Gamma\).  Write \(G'\) for the kernel of the map from \(G|_E\) to \(N_{Q}|_E\):
\begin{center}
\begin{tikzcd}
0 \arrow[r] & N_{E \cup R/Q}|_E \arrow[r] & N_{E \cup R}|_E \arrow[r] & N_Q|_E \simeq \O_E(2) \arrow[r] & 0\\
& \ker(G|_E \to N_Q|_E) = G' \arrow[r] \arrow[u, hook]& G|_E \arrow[u, hook] \arrow[ur]
\end{tikzcd}
\end{center}
If \(G|_E \neq G'\), then the map \(G|_E/G' \to \O_E(2)\) factors through \(\O_E(2)(-\Gamma)\), which is stable of slope \(2n+2\).  On the other hand, the kernel \(G' \subseteq N_{E \cup R/Q}|_E\) has slope
\[\mu(G') \leq 2n+5 + \frac{3}{n} - \frac{1}{\rk G'} \leq 2n+5 + \frac{3}{n} - \frac{1}{\rk G},\]
by Condition~\ref{cond:numerical}.  Thus \(G|_E\) also has slope bounded by \(2n+5 + \frac{3}{n} - \frac{1}{\rk G}\).  Hence
\[\mu(G) = \mu(G|_R) + \mu(G|_E) \leq \left(2n+3 + \frac{1}{\rk G}\right) + \left(2n+5 + \frac{3}{n} - \frac{1}{\rk G} \right) = \mu(N_{E \cup R}),\]
and \(N_{E \cup R}\) is semistable.
\end{proof}

Our goal is therefore to prove that Condition \ref{cond:numerical} holds for all \(n \geq 3\).   In fact, we will prove that Condition \ref{cond:numerical-double} holds for all \(n \geq 3\), since this implies that Condition \ref{cond:numerical} holds.
While Condition \ref{cond:numerical-double} is stated for \emph{all} subbundles of \(N_{E/Q}[2\Gamma \posmod N_{E/S}]\), it suffices to check the slope bound for the finitely many Harder--Narasimhan pieces.

\begin{lem}\label{lem:convex}
Let \(N\) be a vector bundle on an irreducible curve \(C\) with HN-filtration
\[0 \subset V_1 \subset V_2 \subset \cdots \subset V_m = N.\]
Let \(B(r,d)\) be any (affine) linear function whose coefficient
of \(d\) is nonnegative. If \(B(0,0) \leq 0\) and \(B(\rk V_i, \deg V_i) \leq 0\) for all \(i\), then \(B(\rk F, \deg F) \leq 0\) for all subbundles \(F \subset N\).
\end{lem}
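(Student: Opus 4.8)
The plan is to recognize this as the statement that the Harder--Narasimhan polygon of \(N\) bounds the points \((\rk F, \deg F)\) from above, combined with the hypothesis that \(B\) is monotonic in the degree variable. Write \(B(r,d) = \alpha r + \beta d + \gamma\) with \(\beta \geq 0\), and set \(V_0 = 0\), so that the vertices of the polygon are the points \((\rk V_i, \deg V_i)\) for \(0 \leq i \leq m\). I would let \(p \colon [0, \rk N] \to \mathbb{R}\) denote the piecewise-linear function interpolating these vertices. The defining property of the HN-filtration is that the successive quotients \(V_i/V_{i-1}\) are semistable with strictly decreasing slopes \(\mu_1 > \mu_2 > \cdots > \mu_m\); since \(\mu_i\) is exactly the slope of the \(i\)-th segment of \(p\), the function \(p\) is concave.

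The crux is the subsheaf bound: for every subbundle \(F \subseteq N\), one has \(\deg F \leq p(\rk F)\). To prove this I would intersect \(F\) with the filtration, obtaining \(0 = F \cap V_0 \subseteq F \cap V_1 \subseteq \cdots \subseteq F \cap V_m = F\), and observe that each graded piece \((F \cap V_i)/(F \cap V_{i-1})\) injects into the semistable bundle \(V_i/V_{i-1}\). Writing \(r_i\) for its rank, semistability of \(V_i/V_{i-1}\) gives \(\deg\bigl((F \cap V_i)/(F \cap V_{i-1})\bigr) \leq r_i \mu_i\), and summing over \(i\) (degree being additive in short exact sequences) yields \(\deg F \leq \sum_i r_i \mu_i\) with \(\sum_i r_i = \rk F\) and \(0 \leq r_i \leq \rk(V_i/V_{i-1})\). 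A rearrangement (greedy) argument using \(\mu_1 > \cdots > \mu_m\) then shows that, subject to these constraints, \(\sum_i r_i \mu_i\) is maximized by saturating the highest-slope pieces first, and that this maximum equals precisely \(p(\rk F)\). This is the main obstacle, although it is a standard computation that could alternatively be cited as the upper-convex-hull characterization of the HN-polygon.

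With the subsheaf bound in hand, the remainder is elementary. Because \(\beta \geq 0\), monotonicity in the degree variable gives \(B(\rk F, \deg F) \leq B(\rk F, p(\rk F))\). I would then consider the function \(\phi(r) \colonequals B(r, p(r)) = \alpha r + \beta \, p(r) + \gamma\). On each segment \([\rk V_{i-1}, \rk V_i]\) the interpolant \(p\) is affine, so \(\phi\) is affine there; its values at the two endpoints are \(B(\rk V_{i-1}, \deg V_{i-1})\) and \(B(\rk V_i, \deg V_i)\), both at most \(0\) by hypothesis (the endpoint \(r = 0\) using \(B(0,0) \leq 0\)). Since an affine function nonpositive at both endpoints of an interval is nonpositive throughout, we obtain \(\phi(\rk F) \leq 0\). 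Combining, \(B(\rk F, \deg F) \leq \phi(\rk F) \leq 0\), as desired.
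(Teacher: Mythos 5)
Your proof is correct and follows essentially the same route as the paper: the paper's argument simply asserts that \((\rk F, \deg F)\) lies in the convex region spanned by \((0,-\infty), (0,0), (\rk V_1, \deg V_1), \dots, (\rk V_m, \deg V_m)\) and that a linear functional nonpositive at the vertices is nonpositive on the region. The only difference is that you supply a full proof of the standard Harder--Narasimhan polygon bound \(\deg F \leq p(\rk F)\) (via intersecting \(F\) with the filtration and a rearrangement argument), which the paper treats as known.
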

\begin{proof}
The points
\[\left\{(0, -\infty), (0,0), (\rk V_1, \deg V_1), \dots, (\rk V_m, \deg V_m)\right\}\]
form the vertices of a convex polygon in the \((r,d)\) plane.  For any subbundle \(F \subseteq N\), the pair \((\rk F, \deg F)\) is in this polygon.  The assumption that \(B(r, d) \leq 0\) for all vertices implies that it is also true for any point of the convex polygon.
\end{proof}

\begin{cor}\label{cor:check_HN}
Suppose that for each HN-piece \(V\) of \(N_{E/Q}[2\Gamma \posmod N_{E/S}]\) we have
\[\mu(V) \leq 2n + 5  + \frac{3}{n} + \frac{2n+1}{\rk V}.\]
Then Condition \ref{cond:numerical-double} holds.
\end{cor}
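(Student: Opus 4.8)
The plan is to read off Condition~\ref{cond:numerical-double} as an affine linear inequality in \((\rk, \deg)\) and then apply the convexity Lemma~\ref{lem:convex} verbatim. First I would set
\[B(r,d) \colonequals d - \left(2n + 5 + \frac{3}{n}\right) r - (2n+1),\]
whose coefficient of \(d\) is \(1 \geq 0\). For any subbundle \(F\) with \(\rk F > 0\), dividing the inequality \(B(\rk F, \deg F) \leq 0\) by \(\rk F\) recovers exactly
\[\mu(F) \leq 2n + 5 + \frac{3}{n} + \frac{2n+1}{\rk F},\]
the bound appearing in Condition~\ref{cond:numerical-double}. Hence establishing \(B(\rk F, \deg F) \leq 0\) for every subbundle \(F\) of \(N = N_{E/Q}[2\Gamma \posmod N_{E/S}]\) is equivalent to Condition~\ref{cond:numerical-double}.

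Next I would check the three hypotheses of Lemma~\ref{lem:convex} for this \(B\) and this \(N\). The coefficient of \(d\) is nonnegative, as just noted. At the origin, \(B(0,0) = -(2n+1) \leq 0\) for all \(n \geq 1\); here the positive slack term \(\frac{2n+1}{\rk F}\) in Condition~\ref{cond:numerical-double} is precisely what produces the negative constant \(-(2n+1)\) in \(B\), guaranteeing \(B(0,0) \leq 0\). Finally, for each term \(V\) of the Harder--Narasimhan filtration of \(N\), the hypothesis of the corollary is exactly \(\mu(V) \leq 2n+5+\frac{3}{n}+\frac{2n+1}{\rk V}\), which rearranges to \(B(\rk V, \deg V) \leq 0\). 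With all three hypotheses in place, Lemma~\ref{lem:convex} yields \(B(\rk F, \deg F) \leq 0\) for every subbundle \(F \subseteq N\), i.e.\ Condition~\ref{cond:numerical-double}.

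There is no substantive obstacle: the corollary is a formal consequence of the convexity of the Harder--Narasimhan polygon packaged in Lemma~\ref{lem:convex}, together with the observation that the bound to be checked is affine-linear in \((\rk, \deg)\) after clearing the denominator \(\rk F\). The only point requiring care is the sign bookkeeping, namely writing the slope inequality so that the coefficient of \(d\) is nonnegative (so that \(B\) has the form admitted by Lemma~\ref{lem:convex}) and confirming that the resulting constant term \(-(2n+1)\) makes \(B(0,0) \leq 0\). Both are immediate once \(B\) is written as above.
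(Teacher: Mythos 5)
Your proposal is correct and matches the paper's proof exactly: the paper also applies Lemma~\ref{lem:convex} with \(B(r,d) = d - \left(2n+5+\frac{3}{n}\right)r - (2n+1)\), and your write-up simply spells out the verification of the three hypotheses that the paper leaves implicit.
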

\begin{proof}
Apply Lemma \ref{lem:convex} with 
\[B(r, d) = d - \left(2n + 5 + \frac{3}{n}\right)r - (2n+1).\qedhere\]
\end{proof}

The final input is the specialization of \((E, R, S, Q)\) constructed in Section \ref{sec:Qsing}, giving rise to the exact sequence \eqref{eq:exact_Qsing}.
Using this, we will prove the following numerical proposition, which is the heart of our inductive proof.

\begin{prop}
Let \(n>3\). If Condition \ref{cond:numerical-double} holds in \(\pp^{2n-1}\), then it holds in \(\pp^{2n+1}\).
\end{prop}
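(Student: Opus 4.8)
The plan is to run the induction through the degeneration of Section~\ref{sec:Qsing}: after letting $E$ meet $Q_\sing$ in two points and projecting from the line $\overline{x_1 x_2}$, the quotient in the exact sequence~\eqref{eq:exact_Qsing} becomes a twist of the $\pp^{2n-1}$ bundle controlled by Condition~\ref{cond:numerical-double} in dimension $2n-1$, where the inductive hypothesis applies. Since the maximal degree of a rank-$r$ subbundle is upper semicontinuous in a flat family of bundles (by a semicontinuity argument as in Proposition~\ref{prop:stab-open}), it suffices to verify the slope bound of Condition~\ref{cond:numerical-double} for every subbundle of a single degeneration of $N_{E/Q}[2\Gamma \posmod N_{E/S}]$. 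I would therefore compute the flat limit of the Condition bundle under the specialization constructed in Section~\ref{sec:Qsing}.

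The first step is to reconcile the Condition bundle $N_{E/Q}[2\Gamma \posmod N_{E/S}]$ with the bundle appearing in Lemma~\ref{lem:flat_limit_Ncirc}. At each of the two points $\hat x_1, \hat x_2 \in \Gamma$ that limit onto $Q_\sing$, the curve $R \subset S$ meets $E$ quasi-transversely, so the image of $T_{\hat x_i} R$ in $N_E$ is exactly $N_{E/S}|_{\hat x_i}$ and hence $[\hat x_i \posmodalong R] = [\hat x_i \posmod N_{E/S}]$. The Condition bundle is therefore obtained from the bundle $N_{E/Q}[2\Gamma_- \posmod N_{E/S}][\hat x_1 + \hat x_2 \posmodalong R]$ of Lemma~\ref{lem:flat_limit_Ncirc} by one \emph{additional} positive modification towards $N_{E/S}$ at each of $\hat x_1, \hat x_2$. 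The decisive geometric input is that, in the flat limit, these two surplus modifications land inside the subbundle $K \colonequals \O_{E^\circ}(1)(x_1+x_2)^{\oplus 2}$ of~\eqref{eq:exact_Qsing}, namely the directions contracted by projection from $\overline{x_1 x_2}$. Granting this, the flat limit $\tilde N^\circ$ of the Condition bundle fits into
\[0 \to \tilde K \to \tilde N^\circ \to Q' \to 0, \qquad Q' \colonequals N_{\bar E/\bar Q}[2\bar\Gamma \posmod N_{\bar E/\bar S}](x_1 + x_2),\]
where $\tilde K$ is a two-point positive modification of $K$, so its line-bundle summands have degree at most $(2n+4)+2 = 2n+6$.

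The estimate is then a short calculation. For a subbundle $F \subseteq \tilde N^\circ$, write $a = \rk(F \cap \tilde K) \in \{0,1,2\}$ and $b = \rk F - a$, and use the filtration $0 \to F \cap \tilde K \to F \to \Im(F \to Q') \to 0$. The saturation of $\Im(F \to Q')$ is a subbundle of $Q'$, so (the twist by $\O(x_1+x_2)$ raising each slope by $2$) the inductive hypothesis gives $\deg \Im(F \to Q') \leq \left(2n+5 + \frac{3}{n-1}\right) b + (2n-1)$, while $\deg(F \cap \tilde K)$ is controlled by the splitting type of $\tilde K$. The decisive case is $a = 0$: then $F$ injects into $Q'$, and the inductive bound yields $\mu(F) \leq 2n+5 + \frac{3}{n-1} + \frac{2n-1}{\rk F}$, which is at most $2n+5+\frac{3}{n} + \frac{2n+1}{\rk F}$ precisely because $3\,\rk F \leq 2n(n-1)$ holds for $\rk F \leq 2n-1$ and $n \geq 4$. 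The cases $a = 1$ and $a = 2$ reduce in the same way to $(n-1)(n-3) \geq 0$ and to a strictly weaker inequality, all valid for $n > 3$.

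The main obstacle is the middle step: proving that the surplus modifications towards $N_{E/S}$ genuinely modify the contracted summand $K$ rather than contributing to the horizontal quotient $Q'$. This is precisely the subtlety isolated in Lemma~\ref{lem:flat_limit_Ncirc}, where the flat limit is delicate exactly because $E^\circ$ passes through $Q^\circ_\sing$; concretely one must check that $N_{E^\circ/S^\circ}$ points along the rulings $M_i$ contracted by the projection. That the surplus lands in $K$ is essential: if instead it fed into $Q'$, the bound obtained in the case $a = 0$ would be too weak by exactly the gap $\frac{3}{n(n-1)}$ between $\frac{3}{n-1}$ and $\frac{3}{n}$, and the induction would break down.
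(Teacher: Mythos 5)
Your overall architecture matches the paper's: specialize so that \(E\) meets \(Q_\sing\) at \(x_1, x_2\), run the induction through the projection sequence \eqref{eq:exact_Qsing}, and split into cases according to how a subbundle meets the kernel. But the step you yourself flag as ``the main obstacle'' is not a technical verification you can defer --- it is the crux, and the paper's own analysis indicates it fails. The surplus modification at \(\hat{x}_i\) is towards \(N_{E/S}|_{\hat{x}_i}\), and in the family of Lemma~\ref{lem:flat_limit_Ncirc} the line subbundle \(N_{E/S}\) limits to \(N_{E^\circ/S^\circ}\), whose value at \(x_i\) is the direction of the curve \(L_i \subset S^\circ\). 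Under the identification over \(B_-^\circ\), that direction corresponds to the ruling plane \(\bar{M}_i \subset \bar{Q}\), which the paper explicitly notes is \emph{transverse} to \(\bar{E}\) at \(x_i\) --- i.e., it survives projection from \(\overline{x_1x_2}\) and maps isomorphically onto a normal direction of \(\bar{E}\) in \(\bar{Q}\). So the two surplus modifications feed into the quotient \(Q'\), not into \(\O_{E^\circ}(1)(x_1+x_2)^{\oplus 2}\), and by your own accounting the case \(a=0\) estimate then misses by exactly \(\tfrac{3}{n(n-1)}\). (Both placements are consistent with the total degree \((2n+2)^2\), so no bookkeeping check will rescue the claim.)

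The paper closes this gap by an entirely different mechanism, which your proposal omits. It does not try to track the double modification through the limit at all: it sets \(F = F_0 \cap N_{E/Q}[2\Gamma - \hat{x}_1 - \hat{x}_2 \posmod N_{E/S}]\), accepts the loss \(\mu(F_0) \leq \mu(F) + \tfrac{2}{\rk F}\), and in Case 2 recovers the resulting deficit \(\tfrac{3r}{n-1} - \tfrac{3r}{n}\) arithmetically rather than geometrically: it first reduces (via Corollary~\ref{cor:check_HN}) to the Harder--Narasimhan pieces \(F_0\), which are \emph{natural}, so Lemma~\ref{lem:blackmagic-natural} forces \((2n+2) \mid \deg F_0\); it then shows that the forbidden window \(\bigl(\tfrac{3r}{n}, \tfrac{3r}{n-1}\bigr]\) contains no integer compatible with this congruence, by a mod-\((2n+2)\) analysis for \(n \geq 7\) and a finite check for \(n = 4, 5, 6\). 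Your proposal uses neither the reduction to HN pieces nor the divisibility, so once the geometric claim fails there is no remaining route to the bound with \(\tfrac{3}{n}\). Your Case 1 computation (the cases \(a = 1, 2\)) and the observation that \(3r \leq 2n(n-1)\) for \(r \leq 2n-1\), \(n \geq 4\) are fine, but they only become relevant after the quotient of the limit sequence is correctly identified.
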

\begin{proof}
We will use the notation and results of Section \ref{sec:Qsing}.  In particular, let \((\bar{E}, \bar{R}, \bar{S}, \bar{Q})\) be a general instance of Setup \ref{eq:S_sigma_Q_diagram} in \(\pp^{2n-1}\).  Let \(x_1, x_2\) be points on \(\bar{E}\) such that \(\bar{f}_1(x_1) = \bar{f}_2(x_2)\).  Then in Section \ref{sec:Qsing} we constructed a specialization \((E^\circ, R^\circ \cup F_p, S^\circ, Q^\circ)\) of a general instance \((E, R, S, Q)\) of Setup \ref{eq:S_sigma_Q_diagram} in \(\pp^{2n+1}\), such that \(E^\circ\) meets \(Q^\circ_{\text{sing}}\) in the points \(x_1, x_2\).  We write \(\hat{x}_1, \hat{x}_2\) for points on \(E\) limiting to \(x_1, x_2\).

Applying Corollary \ref{cor:check_HN}, it suffices to check that Condition \ref{cond:numerical-double} holds for each piece
of the HN-filtration of \(N_{E/Q}[2\Gamma \posmod N_{E/S}]\).  Since the HN-pieces are natural, their degrees are multiples of \(2n+2\) by Lemma \ref{lem:blackmagic-natural}.
Let \(F_0 \subset N_{E/Q}[2\Gamma \posmod N_{E/S}]\) be any such subbundle of rank \(r\) and degree a multiple of \(2n+2\). 
Let \(F \subset N_{E/Q}[2\Gamma - \hat{x}_1 - \hat{x}_2 \posmod N_{E/S}]\) be the intersection
of \(F_0\) with \(N_{E/Q}[2\Gamma - \hat{x}_1 - \hat{x}_2 \posmod N_{E/S}]\).
We have
\[\mu(F_0) \leq \mu(F) + \frac{2}{\rk F},\]
so it suffices to show
\[\mu(F) \leq 2n + 5 + \frac{3}{n} + \frac{2n - 1}{\rk F}.\]

We now utilize the specialization constructed in Section \ref{sec:Qsing}.  Write \(N^\circ\) for the bundle appearing in Lemma \ref{lem:flat_limit_Ncirc}, which is a flat limit of the bundle \(N_{E/Q}[2\Gamma - \hat{x}_1 - \hat{x}_2 \posmod N_{E/S}]\).  This bundle sits in the exact sequence
\[ 0 \to \O_{E^\circ}(1)(x_1 + x_2)^{\oplus 2} \to N^\circ  \stackrel{\phi}{\longrightarrow} N_{\bar{E} / \bar{Q}}[2 \bar{\Gamma} \posmod N_{\bar{E}/\bar{S}}](x_1 + x_2) \to 0.\]
Let \(F^\circ \subseteq N^\circ\) be the saturation of the flat limit of \(F\).  Then \(\rk F^\circ = \rk F = r\) and \(\deg F^\circ \geq \deg F\).

\medskip
\noindent
\textbf{\boldmath Case 1: \(F^\circ\) intersects \(\ker \phi\) nontrivially.} We obtain an exact sequence
\[0 \to F' \to F^\circ \to F'' \to 0,\] 
where \(F' \subset \O_{E^\circ}(1)(x_1 + x_2)^{\oplus 2}\) and \(F'' \subset N_{\overline{E}/\overline{Q}}[2\overline{\Gamma}\posmod N_{\overline{E}/\overline{S}}](x_1 + x_2)\).
Since \(\O_{E^\circ}(1)(x_1+x_2)^{\oplus 2}\) is semistable, we have that
\[ \mu(F') \leq \mu (\O_{E^\circ}(1)(x_1 + x_2)^{\oplus 2}) = 2n+4.\]
By our inductive hypothesis, we have that 
\[\mu(F'') \leq 2n+5+ \frac{3}{n-1} + \frac{2n-1}{\rk F''}.\]  
We conclude that 
\begin{equation}\label{eqn-muF}
\mu(F^\circ) \leq \frac{1}{r} \bigg(2n+4\bigg) + \frac{r-1}{r} \bigg(2n+5 + \frac{3}{n-1} + \frac{2n-1}{r-1}\bigg) = 2n + 5 + \frac{3r-2-n}{r(n-1)} + \frac{2n-1}{r}.
\end{equation}
If \(n \geq 3\), then \(6n-3 \leq n^2 + 2n\). Since \(r \leq \rk(N_{E/Q}[2\Gamma \posmod N_{E/S}]) =2n-1\), we have the inequality  \(3r \leq n^2 + 2n\), which implies that 
\[\frac{3r-2-n}{r(n-1)} \leq \frac{3}{n} \quad \mbox{if}  \quad n \geq 3.\] 
Substituting into \eqref{eqn-muF}, we get
\[\mu(F) \leq \mu(F^\circ) \leq 2n+5 + \frac{3}{n} + \frac{2n-1}{r},\]
which proves the proposition when \(F\) intersects the kernel of \(\phi\) nontrivially.

\medskip
\noindent
\textbf{\boldmath Case 2: \(F^\circ\) is isomorphic to its image under \(\phi\).} Identifying \(F^\circ\) with its image under \(\phi\), we have
\[F^\circ \subset N_{\overline{E}/\overline{Q}}[2\overline{\Gamma}\posmod N_{\overline{E}/\overline{S}}](x_1 + x_2) \qand r \leq 2n-3.\] 
By the inductive hypothesis, we have that 
\[\mu(F^\circ) \leq 2n+5+ \frac{3}{n-1} + \frac{2n - 1}{r},\]
and hence for the general fiber we have
\[ \deg(F_0) \leq \deg(F) + 2 \leq  \deg(F^\circ) + 2 \leq (2n+5) r + \frac{3r}{n-1} + 2n + 1.\]
We would instead like to show the stronger inequality  
\begin{equation}\label{eqn-desired}
\deg(F_0) \leq (2n+5) r + \frac{3r}{n} + 2n + 1.
\end{equation}

To do this, we will use the fact that naturality of the HN-pieces implies that \(2n+2\) divides \(\deg(F_0)\).
If there are no integers \(k\) satisfying the inequality
\begin{equation}\label{eqn-3rn}
\frac{3r}{n} < k \leq \frac{3r}{n-1},
\end{equation}
then \eqref{eqn-desired} holds, as \(\deg(F_0)\) is an integer. Hence, we assume that there is an integer \(k\) satisfying (\ref{eqn-3rn}).

First, suppose \(n \geq 6\). We claim that the width of the interval
\eqref{eqn-3rn} is strictly less than \(1\).
Indeed, since \(r \leq 2n - 3\) and \(n \geq 6\),
\[\frac{3r}{n-1}- \frac{3r}{n}= \frac{3r}{n^2-n} \leq \frac{6n-9}{n^2-n} < 1.\]
If \eqref{eqn-desired} does not hold, then 
\[3r+k  -1 \equiv (2n+5)r + k +2n+1 = \deg(F_0) \equiv 0 \pmod{2n + 2},\]
i.e., we may write \(3r + k - 1 = (2n + 2)\ell\) for an integer \(\ell\).
Plugging this back into \eqref{eqn-3rn}, and subtracting \(2\ell\)
from each term, we obtain
\[\frac{2\ell-k+1}{n} < k-2\ell \leq \frac{4\ell -k +1}{n-1}.\]
If \(k-2\ell \leq 0\), the left inequality is violated. If \(k-2\ell \geq 1\), then the left fraction is nonpositive, which contradicts our observation that the width of this interval is strictly less than \(1\).

For \(4 \leq n \leq 5\), we complete the proof by checking directly that
there are no integers \(k\) satisfying
the conditions
\begin{equation}
\frac{3 r}{n} < k \leq \frac{3 r}{n-1}, \quad 0< r \leq 2n  \quad \mbox{and} \quad 3 r + k -1 \equiv 0 \pmod{2n+2}. \label{wwww}
\end{equation}
The following three tables summarize the possible values of \(k\) for each value of \(r\) and compute  \(3 r + k -1 \pmod{2n+2}\) in the two cases \(n=4\) and \(n=5\), respectively.

\bigskip

\(n=4 \quad\)
\begin{tabular}{|c|c|c|c|c|c|c|}  \hline
 \(r\)   & 1& 2& 3& 4& 5& 6 \\ \hline
 \(k\)  & 1& 2& 3 & 4 & 4  or 5 & 5 or 6 \\ \hline 
 \(3r+k-1 \pmod{10}\) & 3 & 7 & 1 & 5 & 8 or  9 & 2 or 3 \\ \hline
\end{tabular}
\bigskip

\(n=5 \quad\)
\begin{tabular}{|c|c|c|c|c|c|c|c|c|}  \hline
\(r\)    & 1& 2& 3& 4& 5& 6 & 7 & 8  \\ \hline 
\(k\)    & \(\emptyset\) & \(\emptyset\) & 2 & 3 & \(\emptyset\) & 4 & 5 &  5 or 6 \\ \hline 
 \(3r+k-1 \pmod{12}\) &  &  & 10 & 2 &  & 9 & 1 & 4 or 5  \\ \hline
\end{tabular}
\bigskip

\noindent
We see that when \(n>3\), there are no integers \(k\) satisfying \eqref{wwww}.
\end{proof}

\noindent
To finish, it suffices to deal with the base case:

\begin{prop}
Condition \ref{cond:numerical-double} holds in \(\pp^7\).
\end{prop}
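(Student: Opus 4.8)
The plan is to treat the base case \(n = 3\) (so \(g = 8\)) by hand, since the inductive reduction of the previous proposition is unavailable here: it would require Condition~\ref{cond:numerical-double} in \(\pp^5\), i.e.\ \(n = 2\), \(g = 6\), and this must fail---otherwise Proposition~\ref{prop:bound_subs} would force the normal bundle of the general canonical curve of genus \(6\) to be semistable, contradicting Remark~\ref{rem:counter}. By Corollary~\ref{cor:check_HN}, together with the naturality of the Harder--Narasimhan pieces (Lemma~\ref{lem:blackmagic-natural}, which makes their degrees divisible by \(2n + 2 = 8\)), it suffices to bound the slopes of the HN-pieces of \(B \colonequals N_{E/Q}[2\Gamma \posmod N_{E/S}]\). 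For \(n = 3\) this bundle has rank \(5\), degree \(64\), and slope \(64/5\), and the inequality to verify for each HN-piece \(V\) is \(\mu(V) \leq 12 + 7/\rk V\).

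Next I would identify the filtration explicitly. The positive modification exhibits \(N_{E/S}(2\Gamma) \subset B\) as a sub-line-bundle of degree \(16\), of slope \(16 > 64/5\). Using the Mayer--Vietoris sequence coming from \(S \subset \Sigma_i \subset Q\),
\[0 \to N_{E/S} \to N_{E/\Sigma_1} \oplus N_{E/\Sigma_2} \to N_{E/Q} \to 0,\]
a direct computation gives \(B/N_{E/S}(2\Gamma) \simeq N_{E/Q}/N_{E/S} \simeq N_{S/\Sigma_1}|_E \oplus N_{S/\Sigma_2}|_E\), a direct sum of two rank-\(2\) bundles, each of degree \(24\) and slope \(12\). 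Thus, if each \(N_{S/\Sigma_i}|_E\) is semistable, then the quotient \(B/N_{E/S}(2\Gamma)\) is semistable of slope \(12 < 64/5\), so the HN-filtration of \(B\) is exactly \(0 \subset N_{E/S}(2\Gamma) \subset B\), with pieces of invariants \((\rk, \deg) = (1, 16)\) and \((5, 64)\). These satisfy \(16 \leq 19\) and \(64 \leq 67\), which is the desired bound.

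It therefore remains to prove that \(N_{S/\Sigma_i}|_E\) is semistable, and this is the main obstacle: it is a genuinely new input rather than a numerical consequence of the earlier results. On the genus-\(1\) curve \(E\), the bundle \(N_{S/\Sigma_i}|_E\) has rank \(2\) and degree \(24\), whose rank and degree are not coprime, so it can be semistable but never stable, and we must rule out precisely the maximally unstable behavior. Concretely, were the quotient unstable, the \(\Sigma_1 \leftrightarrow \Sigma_2\) symmetry forces its maximal destabilizing subsheaf to be \(\ell_1 \oplus \ell_2\) with \(\deg \ell_1 = \deg \ell_2 = d\); naturality of the maximal destabilizing subbundle of \(B\) (divisibility of \(16 + 2d\) by \(8\)) forces \(4 \mid d\), and semistability of \(N_{E/\Sigma_i}\) (rank \(3\), slope \(8\)) forces \(d \leq 16\). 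This leaves only the case \(d = 16\)---a sub-line-bundle of \(N_{S/\Sigma_i}|_E\) of slope \(16\)---to exclude.

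To carry this out I would argue as in Section~\ref{sec:our_degen}, exploiting the scroll geometry: \(S \subset \Sigma_i \simeq \pp^1 \times \pp^n\) is a family of rational normal curves over the base \(\pp^1\), so \(N_{S/\Sigma_i}\) restricts to each fiber as the balanced bundle \(\O(n + 2)^{\oplus (n-1)}\). I would then degenerate \(E\) (equivalently, degenerate the configuration of Setup~\ref{eq:S_sigma_Q_diagram}) to a curve on which the flat limit of \(N_{S/\Sigma_i}|_E\) becomes balanced, and apply Lemma~\ref{lem:blackmagic-rationalbase} together with the naturality of the determinant of a putative destabilizing subbundle to conclude that, for general \(E\), no sub-line-bundle of slope exceeding \(12\) can persist. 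The hard part is exactly this semistability statement for the scrollar normal bundle \(N_{S/\Sigma_i}|_E\): unlike in the inductive step it cannot be reduced to a lower-dimensional instance of our Setup, and it is the precise point at which the exclusion of genus \(6\) manifests itself.
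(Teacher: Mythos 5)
Your reduction is the same as the paper's: you identify the exact sequence \(0 \to N_{E/S}(2\Gamma) \to N_{E/Q}[2\Gamma \posmod N_{E/S}] \to N_{S/\Sigma_1}|_E \oplus N_{S/\Sigma_2}|_E \to 0\), compute all the invariants correctly, and correctly reduce everything to semistability of the rank-\(2\), degree-\(24\) bundles \(N_{S/\Sigma_i}|_E\). The symmetry/naturality step forcing \(4 \mid d\) for the common degree \(d\) of the putative maximal destabilizing line subbundles is also essentially the paper's argument (the paper applies Lemma \ref{lem:blackmagic-natural} to the natural subbundle \(\ell_1 \oplus \ell_2 \subset N_{S/Q}|_E\), giving \(8 \mid 2d\)).

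However, the proof of that semistability --- which you yourself flag as ``the main obstacle'' and ``the hard part'' --- is not actually supplied, and this is a genuine gap in two places. First, your upper bound \(d \leq 16\) is extracted from an asserted semistability of \(N_{E/\Sigma_i}\) (rank \(3\), slope \(8\)); this is nowhere proved, is not among the paper's established results, and is not an obvious fact. Second, even granting it, the one remaining case \(d = 16\) is addressed only by an unexecuted plan (degenerate \(E\) so that the flat limit of \(N_{S/\Sigma_i}|_E\) becomes balanced and invoke Lemma \ref{lem:blackmagic-rationalbase}); no such degeneration is constructed, and it is not explained what the two specializations with distinct \(c_1\) would be. The paper closes exactly this gap with a different input: the interpolation theorem of \cite{lv22} (the tuple \((8,1,7,0,1)\) is good) shows that \(N_E[\posmodalong R] = N_E[\Gamma \posmod N_{E/S}]\) is semistable of slope \(12\). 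A line subbundle of degree at least \(15\) in each \(N_{S/\Sigma_i}|_E\) would then pull back, through the single-modification sequence \(0 \to N_{E/S}(\Gamma) \to N_{E/Q}[\Gamma \posmod N_{E/S}] \to N_{S/Q}|_E \to 0\), to a rank-\(3\) subsheaf of degree at least \(8 + 15 + 15 = 38\), hence of slope \(38/3 > 12\), inside that semistable bundle --- a contradiction. This gives \(d \leq 14\), and then \(8 \nmid 26, 28\) eliminates \(d = 13, 14\). You would need either to prove your two missing claims or to import an input of this kind; as written the argument does not close.
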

\begin{proof}
In this case \(n=3\) and we want that every subbundle \(F\) of \(N_{E/Q}[2\Gamma \posmod N_{E/S}]\) has slope at most \(12 + \frac{7}{\rk F}\).   To prove this, we use the normal bundle exact sequence for \(E \subset S \subset Q\).  
Since \(S\) is the complete intersection of \(\Sigma_1\) and \(\Sigma_2\) in \(Q\), we have that
\[N_{S/Q}|_E \simeq N_{S/\Sigma_1}|_E \oplus N_{S/\Sigma_2}|_E.\]
Since \(4\) is not divisible by \(8\), by Lemma~\ref{lem:blackmagic-natural}, the degree \(4\) maps giving rise to the scrolls \(\Sigma_1\) and \(\Sigma_2\) are exchanged by monodromy.   Hence the two scrolls are exchanged by monodromy, and therefore the two bundles \(N_{S/\Sigma_i}|_E\) have degree \(24\) and the same profile of Jordan--H\"{o}lder factors.  We will first show that \(N_{S/\Sigma_i}|_E\) is semistable of slope \(12\).

The bundle \(N_E[\Gamma \posmod N_{E/S}] = N_E[\posmodalong R]\) has slope \(12\) and satisfies interpolation by \cite{lv22} (in the language of that paper, this is the inductive hypothesis \(I(8, 1, 7, 0, 1)\) and the tuple \((8, 1, 7, 0, 1)\) is good).  Because any bundle with integral slope that satisfies interpolation is semistable, this bundle is semistable (see, for example, \cite[Remark 1.6]{v18}.) Consider the normal bundle exact sequence
\[0 \to N_{E/S}(\Gamma) \to N_{E/Q}[\Gamma \to N_{E/S}] \to \left[N_{S/Q}|_E\simeq N_{S/\Sigma_1}|_E \oplus N_{S/\Sigma_2}|_E\right] \to 0.\]
The line subbundle \(N_{E/S}(\Gamma)\) has degree \(8\).
First suppose that one (and hence both) of \(N_{S/\Sigma_i}|_E\) had a line subbundle of slope at least \(15\).  Then the full preimage in \(N_{E/Q}[\Gamma \posmod N_{E/S}]\) would be a bundle of slope at least \(38/3 = 12 + 2/3\).  Since this is also a subbundle of \(N_{E}[\Gamma \posmod N_{E/S}]\), it contradicts the semistability of \(N_{E}[\Gamma \posmod N_{E/S}]\).  Hence every line subbundle of \(N_{S/\Sigma_i}|_E\) is of degree at most \(14\).  It suffices, therefore, to rule out the possibility that \(N_{S/\Sigma_i}|_E\) is a direct sum of line bundles of degrees \(14, 10\) or \(13, 11\).
In either of these cases, the sum of the two positive subbundles
would be of degree \(28\) or \(26\).
Since \(8\) does not divide \(28\) or \(26\), this is impossible by Lemma~\ref{lem:blackmagic-natural}.
Hence \(N_{S/\Sigma_i}|_E\) is semistable.

We now turn to the normal bundle exact sequence involving the double modification
\[0 \to N_{E/S}(2\Gamma) \to N_{E/Q}[2\Gamma \posmod N_{E/S}] \to \left[N_{S/Q}|_E\simeq N_{S/\Sigma_1}|_E \oplus N_{S/\Sigma_2}|_E\right] \to 0,\]
and consider how \(F\) sits with respect to this sequence.  If \(F\) does not contain \(N_{E/S}(2\Gamma)\), then \(\mu(F) \leq 12 \leq 12 + \frac{7}{\rk F}\).  If \(F\) contains \(N_{E/S}(2\Gamma)\), then
\[\mu(F) \leq 16 \left( \frac{1}{\rk F} \right) + 12 \left( \frac{\rk F - 1}{\rk F} \right) \leq 12 + \frac{4}{\rk F} \leq 12 + \frac{7}{\rk F}. \qedhere\]
\end{proof}

\bibliographystyle{plain}

\begin{thebibliography}{ABCH}

\bibitem[AC87]{arbarellocornalba}
E. Arbarello and M. Cornalba, The Picard groups of the moduli spaces of curves, Topology 26 (1987), no. 2, 153--171.

\bibitem[AC98]{arbarellocornalba2}
E. Arbarello, M. Cornalba: Calculating cohomology groups of moduli spaces of curves via algebraic geometry. Inst. Hautes Etudes Sci. Publ. Math. 88 (1998), 97--127.

\bibitem[ACGH85]{acgh}
E. Arbarello, M. Cornalba,  P. A.  Griffiths and J. Harris,  Geometry of algebraic curves. Vol. I, Grundlehren der Mathematischen Wissenschaften [Fundamental Principles of Mathematical Sciences] {\bf 267}, Springer-Verlag, New York, 1985, xvi+386.

\bibitem[AFO16]{AFO}
M. Aprodu, G. Farkas and A. Ortega, Restricted Lazarsfeld-Mukai bundles and canonical curves, in Development of moduli theory--Kyoto 2013, Advanced studies in pure mathematics, vol. 69, Mathematical Society of Japan (2016), 303--322.

\bibitem[ALY19]{aly}      
 A. Atanasov, E. Larson and D. Yang,  Interpolation for normal bundles of general curves, Mem. Amer. Math. Soc., {\bf 257} no. 1234 (2019), v+105.   
 
\bibitem[B17]{Bruns}
G. Bruns, The normal bundle of canonical genus \(8\) curves, arXiv:1703.06213v2.

\bibitem[CPS21]{cps}
A. Cela,  R. Pandharipande and J. Schmitt, Tevelev degrees and Hurwitz moduli spaces, Math. Proc. Cambridge Philos. Soc. (2021), 1--32.
 
\bibitem[CLV22]{clv}
I. Coskun, E. Larson and I. Vogt, Stability of normal bundles of space curves, Algebra Number Theory, {\bf 16} no. 4 (2022), 919--953.

\bibitem[EiL92]{einlazarsfeld}
L. Ein and R. Lazarsfeld, Stability and restrictions of Picard bundles, with an application to the normal bundles of elliptic curves. Complex projective geometry (Trieste, 1989/Bergen, 1989), 149--56, 
London Math. Soc. Lecture Note Ser., 179, Cambridge Univ. Press, Cambridge, 1992. 

\bibitem[FL22]{fl}
G. Farkas and E. Larson, The minimal resolution conjecture for points on general curves, preprint arXiv 2209.11308

\bibitem[FLi22]{fli}
 G. Farkas and C. Lian,  Linear series on general curves with prescribed incidence conditions, arXiv 2105.09340 to appear in J. Inst. Math. Jussieu

\bibitem[H83]{harer}
J. Harer, The second homology group of the mapping class group of an orientable surface, Invent. Math. 72 (1983), no. 2, 221--239.

\bibitem[HH83]{HH}
R. Hartshorne and A. Hirschowitz, Smoothing algebraic space curves, Algebraic geometry, Sitges (Barcelona), 1983, Lecture Notes in Math., 1124, Springer, Berlin (1985), 98--131.

\bibitem[HW79]{HW}
G.H. Hardy and E. M. Wright, {\em An Introduction to the Theory of Numbers}, 5th ed. Oxford, England: Clarendon Press, 1979.

 \bibitem[LV22]{lv22}
E. Larson and I. Vogt, Interpolation for Brill--Noether curves,
arXiv:2201.09445.



\bibitem[S03]{schroer}
S. Schr\"{o}er, 
The strong Franchetta conjecture in arbitrary characteristics, 
Internat. J. Math. 14 (2003), no. 4, 371--396. 

\bibitem[V18]{v18}
I. Vogt,
Interpolation for Brill–Noether space curves.
Manuscripta Math. 156 (2018) no.1-2, 137-147.

\bibitem[Z99]{zamora}
A.G. Zamora,  On the variety of quadrics of rank four containing a projective curve, Boll. Unione Mat. Ital. Sez. B Artic. Ric. Mat. (8) 2 (1999), 453--462.

\end{thebibliography}

\end{document}